\documentclass[10pt,reqno]{amsart}
\usepackage[english]{babel}

\usepackage{enumitem}
\catcode`\"=\active \let"=\" \let\3=\ss
\usepackage{amsfonts}

\theoremstyle{plain}
  \newtheorem{theorem}{Theorem}[subsection]
  \newtheorem{corollary}[theorem]{Corollary}
  \newtheorem{lemma}[theorem]{Lemma}
  \newtheorem{proposition}[theorem]{Proposition}
  \newtheorem{proposition_within_section}{Proposition}[section]
\theoremstyle{definition}
  \newtheorem{definition}[theorem]{Definition}
  \newtheorem{examples}[theorem]{Examples}
  \newtheorem{example}[theorem]{Example}
  \newtheorem{rem}[theorem]{Remark}
  
  \newtheorem{open_problem}[theorem]{Open Problem}
  
  \newtheorem{example_within_section}[proposition_within_section]{Example}
  \newtheorem{rem_within_section}[proposition_within_section]{Remark}

\newcommand{\dist}{\mbox{{\rm dist}}}
\newcommand{\distPos}{\operatorname{d}_+}
\newcommand{\eins}{\mbox{\textbf{1}}}
\newcommand{\one}{\eins}

\newcommand{\asd}{\preceq_{\operatorname{a}}}
\newcommand{\das}{\succeq_{\operatorname{a}}}
\newcommand{\equ}{\simeq_{\operatorname{a}}}
\newcommand{\dx}{\:\mathrm{d}}
\DeclareMathOperator{\fix}{Fix}

\renewcommand{\phi}{\varphi}
\renewcommand{\epsilon}{\varepsilon}

\DeclareMathOperator{\id}{id}

\newcommand{\N}{\ensuremath{\mathbb{N}}}

\newcommand{\R}{\ensuremath{\mathbb{R}}}

\renewcommand{\L}{\mathcal{L}}
\renewcommand{\SS}{\mathcal{S}}
\newcommand{\T}{\mathcal{T}}

\setlist[enumerate]{leftmargin=*}
\setlist[itemize]{leftmargin=*}
\numberwithin{equation}{section}

\hyphenation{nu-me-ri-cal}

\begin{document}
\subjclass[2010]{Primary: 47B60, Secundary: 47A35, 47D06, 47C15, 46B40}
\keywords{Ordered Banach space, positive operator semigroup, asymptotic domination, almost periodicity, stability, lower bound method, mean ergodic theorem, base norm space}
\title[Asymptotic Domination of Semigroups]{Long--Term Analysis of Positive Operator Semigroups via Asymptotic Domination}
\author{Jochen Gl\"uck}
\address{Institut f\"ur Angewandte Analysis, Universit\"at Ulm, 89069 Ulm, Germany}
\email{jochen.glueck@uni-ulm.de}
\author{Manfred P.~H.~Wolff}
\address{Mathematisches Institut, Universit\"at T\"ubingen, 72076 Tübingen, Germany}
\email{manfred.wolff@uni-tuebingen.de}
\dedicatory{Dedicated to Karl H.~Hofmann on the occasion of his 85.~birthday}
\date{\today}

\begin{abstract}
	We consider positive operator semigroups on ordered Banach spac\-es and study the relation of their long time behaviour to two different domination properties. First, we analyse under which conditions almost periodicity and mean ergodicity of a semigroup $\mathcal{T}$ are inherited by other semigroups which are asymptotically dominated by $\mathcal{T}$. Then, we consider semigroups whose orbits  asymptotically dominate a positive vector and show that this assumption is often sufficient to conclude strong convergence of the semigroup as time tends to infinity.
	
	Our theorems are applicable to time-discrete as well as time-continuous semigroups. They generalise several results from the literature to considerably larger classes of ordered Banach spaces.
\end{abstract}

\maketitle

\section{Introduction}

In this article we consider positive operator semigroups on ordered Banach spaces and study two topics which relate their long time behaviour to what we call \emph{asymptotic domination} (a notion which is made precise in Definition~\ref{def:asymptotic-domination}). Those two topics are:

\subsubsection*{(1) Almost periodicity and mean ergodicity}

Consider two positive operator semigroups $\T = (T_t)_{t \in J}$ and $\SS = (S_t)_{t \in J}$ on an ordered Banach space $X$ (where either $J = \N_0$ or $J = [0,\infty)$). Assume that the orbit of every positive vector $x$ under $\SS$ is dominated, up to a ``small error'', by its orbit under $\T$.

In \cite{Emelyanov2003} E.~Yu.~Emel'yanov and the second author studied unter which hypotheses mean ergodicity and almost periodicity is inherited from $\T$ to $\SS$. Many results in \cite{Emelyanov2003} were shown under the assumption that the positive $X_+$ is not only normal, but \emph{strongly normal} (see \cite[Definition~1]{Emelyanov2003}). Yet, K.~V.~Storozhuk demonstrated in \cite{Storozhuk2013} that there exist even finite-dimensional ordered Banach spaces whose cone is normal, but not strongly normal; this solved a problem posed in \cite{Emelyanov2003}. It is therefore natural to ask whether the results of \cite{Emelyanov2003} still hold for cones \emph{which are merely normal.} In Section~\ref{section:long-term-behaviour-of-asymp-dom-sg} of the present paper we show that this is the case for many of those results; this is achieved by replacing the strong normality of the cone with a general observation about asymptotic domination stated in Lemma~\ref{lem:key-lemma}.

\subsubsection*{(2) Lower bound theorems} In Section~\ref{section:lower-bound-theorems} we consider positive operator semigroups $\T$ whose orbits asymptotically dominate a time-independent positive vector $h$; we show that, in many situations, the existence of such a lower bound $h$ implies \emph{strong stability} of the semigroup, that means strong convergence of the semigroup as time tends to infinity. The history of such results goes back to a paper of Lasota and Yorke~\cite{Lasota1982} and since then many such \emph{lower bound theorems} were proved in the literature, some of them on AL-Banach lattices and some on non-commutative $L^1$-spaces (see Section~\ref{section:lower-bound-theorems} for detailed references). We generalise many of those results to a large class of ordered Banach spaces.

\subsubsection*{Organisation of the paper} In Section~\ref{section:prelim-and-basics} we introduce a bit of notation and recall several important concepts from operator theory that we need throughout the article. Moreover, we recall the definition of \emph{asymptotic domination} and prove a few elementary results for it. The content of Sections~\ref{section:long-term-behaviour-of-asymp-dom-sg} and~\ref{section:lower-bound-theorems} has already been described above. Motivated by the setting discussed in Section~\ref{section:lower-bound-theorems}, we close the paper with an Appendix containing some non-trivial examples of ordered Banach spaces whose norm is additive on the positive cone.

\section{Preliminaries and basic terminology} \label{section:prelim-and-basics}

\subsection{Preliminaries}

In this subsection we briefly recall some basics from the theory of ordered Banach spaces in order to fix our notation and for later reference (for the general theory of ordered Banach spaces see e.~g. \cite{Aliprantis2007,Batty1984,Schaefer66}).

\subsubsection*{Ordered Banach spaces}
By an \emph{ordered Banach space} we mean a pair $(X,X_+)$ where $X$ is a real Banach space and $X_+$ is a proper, closed, and convex cone; more precisely this means that $X_+$ is a closed subset of $X$ satisfying $X_+ \cap (-X_+) = \{0\}$, $X_+ + X_+ \subseteq X_+$, and $\alpha X_+ \subset X_+$ for all $\alpha \in [0,\infty)$. It is called the \emph{positive cone}, or simply the \emph{cone} in $X$. By abuse of notation we often write only $X$ to denote the ordered Banach space $(X,X_+)$. As is customary, we set $x \le y$ iff $y-x \in X_+$; the relation $\le$ is a partial order on $X$ which is compatible with addition and multiplication by nonnegative scalars due to the properties of $X_+$. For each $x \in X$ we have $x \in X_+$ iff $x \ge 0$, and in this case we call $x$ \emph{positive}. For $x,y \in X$ we write $x < y$ if $x \le y$ but $x \not= y$. We define the \emph{order interval} $[x,y] = \{z \in X: \, x \le z \le y\}$ for all $x,y \in X$. Whenever it is necessary to stress that an order interval $[x,y]$ is defined in the ordered Banach space $X$, we write more precisely $[x,y]_X$ instead of $[x,y]$. For all $x \in X$, we define
\begin{equation} \label{eq:distance-to-cone}
	\distPos(x) = \inf\{\|x-y\|: y \in X_+\}
\end{equation}
to be the distance of $x$ to the positive cone $X_+$. Note that, if $X$ is a Banach lattice or the self-adjoint part of the predual of a von Neumann algebra, then $\distPos(x) = \|x_-\|$ for each $x \in X$. 

On every ordered Banach space $X$ the function $\distPos: X \to [0,\infty)$ has a couple of useful properties which we list in the following proposition.

\begin{proposition} \label{prop:distance-to-positive-cone}
	Let $X$ be an ordered Banach space. The mapping $\distPos: X \to [0,\infty)$ has the following properties:
	\begin{enumerate}[label=(\alph*)]
		\item It is positively homogeneous, meaning that $\distPos(\alpha x) = \alpha \distPos(x)$ for all $x \in X$ and all $\alpha \in [0,\infty)$.
		\item It is subadditive, meaning that $\distPos(x+y) \le \distPos(x) + \distPos(y)$ for all $x,y \in X$.
		\item It is Lipschitz continuous with Lipschitz constant $1$, meaning that
		\begin{equation*}
			|\distPos(x) - \distPos(y)| \le \|x-y\|
		\end{equation*}
		for all $x,y \in X$.
	\end{enumerate}
\end{proposition}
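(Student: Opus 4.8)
The plan is to read off all three properties directly from the definition \eqref{eq:distance-to-cone}, using nothing beyond the defining properties of the cone $X_+$; none of the estimates requires any real work, and the whole proposition is essentially a collection of standard facts about the distance to a (convex, homogeneous) subset of a normed space.

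For part~(a), I would dispose of the case $\alpha = 0$ by noting that $0 \in X_+$, so $\distPos(0) = 0 = 0 \cdot \distPos(x)$. For $\alpha > 0$ I would use that multiplication by $\alpha$ is a bijection of $X_+$ onto itself (since $\alpha X_+ \subseteq X_+$ and $\alpha^{-1} X_+ \subseteq X_+$); substituting $y = \alpha z$ in the infimum then gives $\distPos(\alpha x) = \inf_{z \in X_+} \|\alpha x - \alpha z\| = \alpha \inf_{z \in X_+} \|x - z\| = \alpha \distPos(x)$. For part~(b) I would use $X_+ + X_+ \subseteq X_+$: for all $u, v \in X_+$ we have $u + v \in X_+$, hence $\distPos(x+y) \le \|(x+y) - (u+v)\| \le \|x - u\| + \|y - v\|$, and taking the infimum first over $u \in X_+$ and then over $v \in X_+$ yields the claim.

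For part~(c) I would only use that $X_+$ is nonempty. For every $z \in X_+$ the triangle inequality gives $\distPos(x) \le \|x - z\| \le \|x - y\| + \|y - z\|$; taking the infimum over $z \in X_+$ yields $\distPos(x) \le \|x - y\| + \distPos(y)$, i.e.\ $\distPos(x) - \distPos(y) \le \|x - y\|$, and interchanging the roles of $x$ and $y$ gives the reverse inequality, hence the assertion. As for the main obstacle: there really is none here — each of the three statements is a one-line consequence of the definition. The only thing worth keeping in mind is the bookkeeping of which cone property is used where: homogeneity $\alpha X_+ \subseteq X_+$ in~(a), additivity $X_+ + X_+ \subseteq X_+$ in~(b), and merely nonemptiness of $X_+$ in~(c).
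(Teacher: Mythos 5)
Your proof is correct and follows essentially the same route as the paper, which merely sketches these elementary arguments (citing $\alpha X_+ \subseteq X_+$ for (a), $X_+ + X_+ \subseteq X_+$ for (b), and noting that (c) holds for the distance to any nonempty set); you simply write out the details, including the worthwhile observation that (a) needs $\alpha X_+ = X_+$ for $\alpha > 0$ rather than just the inclusion.
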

\begin{proof}
	Property~(a) follows from the fact that $\alpha X_+ \subseteq X_+$ for all $\alpha \in [0,\infty)$, and property~(b) follows from the fact that $X_+ + X_+ \subseteq X_+$. Assertion~(c) is in fact true for the distance to an arbitrary set in $X$, and not only for the distance to $X_+$; as the proof of this is elementary, we omit the details.
\end{proof}
The cone $X_+$ is called \emph{generating} if $X_+-X_+ = X$ holds. Having a generating cone is a very weak condition which is met by most of the ordered Banach spaces that one usually encounters. The cone is
 called \emph{normal} if the order interval $[x,y]$ is norm bounded for all $x,y \in X$.  The class of ordered Banach spaces having a normal cone is still sufficiently large to contain a wealth of important examples. For instance, each of the following ordered Banach spaces possesses have a generating and normal cone: all Banach lattices; the self-adjoint part of every $C^*$-algebra; the dual and the pre-dual of the self-adjoint part of every von Neumann algebra. In particular, non-commutative $L^1$-spaces with respect to a semi-finite trace belong to the list of examples since they are preduals of semi-finite von Neumann algebras, see \cite[p.~320]{Takesaki1979}.

If the positive cone of an ordered Banach space $X$ is generating, than there exists a constant $C > 0$ such that every $x \in X$ can be decomposed as
\begin{equation} \label{eq:bounded-decomposition}
	x = y-z \quad \text{for two vectors } y,z \in X_+ \text{ fulfilling } \|y\|, \|z\| \le C\|x\|.
\end{equation}
For a proof of this decomposition property (which traces back to V. Klee) we refer, for instance, to \cite[Proposition~1.1.2]{Batty1984} or \cite[Theorem~2.37]{Aliprantis2007}.

The positive cone in the ordered Banach space $X$ is normal if and only if there exists an equivalent norm on $X$ which is \emph{monotone}, meaning that $\|x\| \le \|y\|$ for all $x,y \in X$ fulfilling $0 \le x \le y$;

\subsubsection*{Duality and positive operators}

We denote the space of all bounded linear operators between two Banach spaces $X$ and $Y$ by $\L(X;Y)$ and we abbreviate $\L(X) := \L(X;X)$. The dual space of a Banach space $X$ is denoted by $X'$. For all $x \in X$ and $x' \in X'$ we use the notation $\langle x', x\rangle := x'(x)$.

Let $X,Y$ be two ordered Banach spaces. A linear operator $T: X \to Y$ is called \emph{positive}, $T \ge 0$ for short, if $T(X_+) \subseteq Y_+$. In case that the positive cone in $X$ is generating, every positive linear operator $T: X \to Y$ is automatically bounded; see e.g.\ \cite[Theorem~2.8]{Arendt2009} or \cite[Corollary~2.33]{Aliprantis2007}. The set of all positive operators $T \in \L(X;Y)$ is denoted by $\mathcal{L}(X;Y)_+$ and, again, we abbreviate $\L(X)_+ := \L(X;X)_+$. If $x \in X$ and $T \in \L(X)_+$, then we have
\begin{align} \label{eq:positive-operator-and-distance-to-cone}
	\distPos(Tx) \le \|T\| \distPos(x).
\end{align}

Throughout, we endow $\R$ with the positive cone $\R_+ := [0,\infty)$ which renders it an ordered Banach space. By the terminology introduced above, a linear mapping $\varphi: X \to \R$ is positive iff $\varphi(x) \ge 0$ for all $x \in X_+$. In case that $X_+$ is generating in $X$, every such $\varphi$ is automatically continuous by the previous paragraph. We define $X'_+ := \{x' \in X': \; x' \text{ is positive}\}$ and, for every $x' \in X'$, we write $x' \ge 0$ iff $x' \in X'_+$; we call such elements $x' \in X'_+$ \emph{positive functionals} on $X$. Obviously, $X'_+ = (-X)^\circ$ (the polar of $-X$ in $X'$, see e.g.\ \cite[Chapter~V]{Schaefer66}), hence $X'_+$ is a closed convex cone. If $X_+$ is generating in $X$, or more generally if $X_+ - X_+$ is dense in $X$, then $X'_+$ is proper and normal so $X'$ becomes an ordered Banach space with respect to the cone $X'_+$. In the following proposition we collect two useful observations on duality and positivity; we denote the \emph{dual operator} of a bounded linear operator $T$ between two Banach spaces by $T'$.

\begin{proposition} \label{prop:duality-and-positivity}
	Let $X,Y$ be ordered Banach spaces.
	\begin{enumerate}[label=(\alph*)]
		\item A vector $x \in X$ is positive if and only $\langle x', x \rangle \ge 0$ for all $x' \in X'_+$.
		\item Let $T \in \L(X;Y)$. We have $T \ge 0$ if and only if $T' \ge 0$.
	\end{enumerate}
\end{proposition}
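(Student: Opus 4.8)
The plan is to establish part~(a) by a Hahn--Banach separation argument and then to obtain part~(b) as a formal consequence of~(a) together with the adjoint relation $\langle y', Tx\rangle = \langle T'y', x\rangle$.

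For part~(a), the implication ``$x$ positive $\Rightarrow$ $\langle x', x\rangle \ge 0$ for all $x' \in X'_+$'' is immediate from the definition of $X'_+$. For the converse I would argue by contraposition. Assuming $x \notin X_+$, I would separate the point $x$ from the closed convex set $X_+$ by the Hahn--Banach separation theorem, obtaining $x' \in X'$ with $\langle x', x\rangle < \inf_{y \in X_+}\langle x', y\rangle =: c$. The point of the argument is then to upgrade $x'$ to a \emph{positive} functional: since $X_+$ is a cone containing $0$, one has $c \le 0$, and if $\langle x', y_0\rangle < 0$ held for some $y_0 \in X_+$, then scaling $y_0$ by large positive factors would force $c = -\infty$, contradicting $c > \langle x', x\rangle > -\infty$. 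Hence $\langle x', \argument \rangle \ge 0$ on $X_+$, i.e.\ $x' \in X'_+$, and in fact $c = 0$, so $\langle x', x\rangle < 0$. This exhibits a positive functional witnessing that $x$ is not positive, which is the contrapositive of the claim.

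For part~(b), I would use the identity $\langle y', Tx\rangle = \langle T'y', x\rangle$, valid for all $x \in X$ and $y' \in Y'$. If $T \ge 0$, then for arbitrary $y' \in Y'_+$ and $x \in X_+$ one has $Tx \in Y_+$, hence $\langle T'y', x\rangle = \langle y', Tx\rangle \ge 0$; as $x$ ranges over $X_+$ this says $T'y' \in X'_+$, so $T' \ge 0$. Conversely, if $T' \ge 0$, then for $x \in X_+$ and any $y' \in Y'_+$ we have $T'y' \in X'_+$ and therefore $\langle y', Tx\rangle = \langle T'y', x\rangle \ge 0$; since this holds for every $y' \in Y'_+$, part~(a), applied to the ordered Banach space $Y$, yields $Tx \in Y_+$, whence $T \ge 0$.

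I expect the only non-routine point to be the separation step in~(a) --- specifically the observation that a hyperplane separating a point from a cone can always be taken to be given by a positive functional --- after which~(b) is merely bookkeeping with the duality pairing and needs none of the structural hypotheses on the cones (not even that they be generating).
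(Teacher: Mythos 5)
Your proof is correct and follows essentially the same route as the paper: the paper establishes part~(a) by citing the bipolar theorem to obtain $X_+ = (-X'_+)^\circ$, which is precisely the Hahn--Banach separation-and-rescaling argument you carry out by hand, and part~(b) is handled identically via the adjoint pairing together with~(a). The only difference is one of presentation --- you unpack the bipolar theorem into an explicit separation argument rather than citing it.
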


In assertion~(b) of this proposition we use the notation $T' \ge 0$ to indicate that $T'X'_+ \subseteq Y'_+$, and we use the notation even if the cones in $X$ and $Y$ are not generating (in which case $X'$ and $Y'$ are not ordered Banach spaces in the sense defined above).

\begin{proof}[Proof of Proposition~\ref{prop:duality-and-positivity}]
	(a) It follows from the bipolar theorem \cite[Theorem \\IV.1.5]{Schaefer66} that  $X_+ = (-X'_+)^\circ$ (where ${}^\circ$ denotes the polar with respect to the duality $(X,X'$)). The implication ``$\Rightarrow$'' in~(b) is obvious and the implication ``$\Leftarrow$'' follows from~(a).
\end{proof}

\begin{corollary} \label{cor:positive-non-annihilating-functional}
	Let $X$ be an ordered Banach space.
	\begin{enumerate}
		\item[(a)] For each $x \in X \setminus \{0\}$ there exists a positive functional $x' \in X'$ such that $\langle x', x\rangle \not= 0$.
		\item[(b)] For each $0 < x \in X$ there exists a positive functional $x' \in X'$ such that $\langle x', x\rangle > 0$.
	\end{enumerate}
\end{corollary}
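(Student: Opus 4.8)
The plan is to obtain both statements from the contrapositive of Proposition~\ref{prop:duality-and-positivity}(a): if a vector $v \in X$ is \emph{not} positive, then there exists a positive functional $x' \in X'_+$ with $\langle x', v \rangle < 0$; in particular $\langle x', v \rangle \neq 0$. This is the only nontrivial input, and it in turn rests on the bipolar theorem as in the proof of Proposition~\ref{prop:duality-and-positivity}.

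For part~(b), let $0 < x$. Since the cone $X_+$ is proper and $x \neq 0$, the vector $-x$ cannot lie in $X_+$, for otherwise $x \in X_+ \cap (-X_+) = \{0\}$. Applying the contrapositive above to $v := -x$ therefore yields a positive functional $x'$ with $\langle x', -x \rangle < 0$, that is, $\langle x', x \rangle > 0$, which is the claim.

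For part~(a), let $x \in X \setminus \{0\}$. Again by properness of the cone, $x$ and $-x$ cannot both be positive, so at least one of them — call it $v$, so that $v = \pm x$ — fails to be positive. Applying the contrapositive above to this $v$ produces a positive functional $x'$ with $\langle x', v \rangle \neq 0$, and since $v = \pm x$ this gives $\langle x', x \rangle \neq 0$.

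The deduction is entirely routine, so there is no genuine obstacle; the only point worth flagging is that part~(a) does \emph{not} reduce to part~(b), since for a general nonzero $x$ neither $x$ nor $-x$ need be positive (e.g.\ off the cone and its negative), so one must invoke Proposition~\ref{prop:duality-and-positivity}(a) directly rather than only through positive vectors.
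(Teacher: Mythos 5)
Your proof is correct and rests on exactly the same input as the paper's, namely Proposition~\ref{prop:duality-and-positivity}(a); you simply phrase it in contrapositive form, whereas the paper assumes all positive functionals annihilate $x$, deduces $x \ge 0$ and $-x \ge 0$, concludes $x=0$, and then obtains~(b) as an immediate consequence of~(a). Both organisations are equally valid and the difference is purely cosmetic.
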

\begin{proof}
	(a) Assume that $\langle x', x \rangle = 0$ for each positive $x' \in X'$. By Proposition~\ref{prop:duality-and-positivity}\\(a) this implies that $x \ge 0$ and $-x \ge 0$, so $x = 0$.
	
	(b) This is an immediate consequence of~(a).
\end{proof}

Let $X$ be an ordered Banach space with generating cone. Recall that the dual cone $X'_+$ is generating in $X'$ if and only if the cone $X_+$ in $X$ is normal (see \cite[Corollary~3 on p.\,220]{Schaefer66} or \cite[Theorem~2.40]{Aliprantis2007}). In this case the bidual space $X''$ becomes an ordered Banach space, too, namely with respect to the cone $X''_+ := \{x'' \in X'': \; \langle x'', x' \rangle \ge 0 \text{ for all } x' \in X'_+\}$; the cone $X'_+$ in $X'$ is automatically normal \cite[Theorem~2.42]{Aliprantis2007}, so the bi-dual cone $X''_+$ is even generating in $X''$. The evaluation map $j: X \hookrightarrow X''$ is \emph{bipositive}, meaning that $j(x) \ge 0$ if and only if $x \ge 0$; this follows from Proposition~\ref{prop:duality-and-positivity}(a). We may thus consider $X$ as a subspace of $X''$ by means of evaluation without violating the order structure on those spaces.

\subsubsection*{Projection bands in the bidual}

In several occasions in Section~\ref{section:lower-bound-theorems} we will consider ordered Banach spaces who behave, in a sense, similarly to so-called \emph{KB-Banach lattices}. To introduce such a concept for general order Banach spaces, we need a bit of preparation. Let $X$ be an ordered Banach space with generating cone. We call a projection $P \in \L(X)$ a \emph{band projection} if both $P$ and its complementary projection $\id_{X} - P$ are positive. A projection $P \in \L(X)$ is a band projection if and only if $0 \le Px \le x$ for all $x \in X_+$.

\begin{proposition}
	Let $X$ be an ordered Banach space with generating cone and let $P,Q \in \L(X)$ be two band projections with the same range. Then $P = Q$.
\end{proposition}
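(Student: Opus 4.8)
The plan is to combine the purely algebraic relations forced by the hypothesis $\operatorname{ran} P = \operatorname{ran} Q$ with the four positivity conditions $P \ge 0$, $\id_X - P \ge 0$, $Q \ge 0$ and $\id_X - Q \ge 0$ that come from the definition of a band projection.

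First I would record that $QP = P$ and $PQ = Q$. Indeed, for every $x \in X$ the vector $Px$ lies in the common range $\operatorname{ran} P = \operatorname{ran} Q$, and a projection acts as the identity on its own range, so $QPx = Px$; the identity $PQ = Q$ follows by exchanging the roles of $P$ and $Q$.

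Using these, I would then rewrite
\[
	Q - P = Q - QP = Q(\id_X - P)
	\qquad\text{and}\qquad
	P - Q = P - PQ = P(\id_X - Q).
\]
Each right-hand side is a composition of two positive operators and is therefore positive, so both $Q - P \ge 0$ and $P - Q \ge 0$ hold in $\L(X)$.

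Finally I would invoke that the cone $\L(X)_+$ is proper: if $T \ge 0$ and $-T \ge 0$, then $Tx \in X_+ \cap (-X_+) = \{0\}$ for every $x \in X_+$, and since $X_+$ is generating, i.e.\ $X = X_+ - X_+$, linearity forces $T = 0$. Applying this to $T = Q - P$ yields $P = Q$. I do not expect any genuine obstacle here; the only place where a hypothesis is really used is this last step, where the cone being generating is needed to pass from ``$Q - P$ annihilates $X_+$'' to ``$Q = P$''.
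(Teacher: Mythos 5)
Your proof is correct. The identities $QP = P$ and $PQ = Q$ do follow from the equality of ranges, the factorisations $Q - P = Q(\id_X - P)$ and $P - Q = P(\id_X - Q)$ are valid, and each factorisation exhibits the difference as a composition of positive operators; the final step correctly uses that $X_+$ is proper (to get $(Q-P)x = 0$ for $x \in X_+$) and generating (to extend this to all of $X$). Your route is genuinely different from the paper's: the paper instead works pointwise, showing first that $Qx = 0$ for every positive $x \in \ker P$ via the chain $0 \le Qx = PQx \le Px = 0$, then uses the generating cone to upgrade this to $\ker P \subseteq \ker Q$ (every element of $\ker P$ is a difference of positive elements of $\ker P$, since $\ker P$ is the range of the positive projection $\id_X - P$), and finally concludes by symmetry that the two projections share both range and kernel. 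Your argument is shorter and stays entirely at the operator level; it also delivers the slightly stronger intermediate statement that $P \le Q$ and $Q \le P$ hold simultaneously in the operator order. The paper's argument is more geometric in flavour and makes explicit the fact that band projections onto the same range automatically have the same kernel, which is of some independent interest. Both proofs invoke the generating hypothesis at exactly one point, just in different guises.
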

\begin{proof}
	We first show that $Qx = 0$ for every $0 \le x \in \ker P$. Indeed, for such a vector $x$ we have $0 \le Qx \le x$ and thus, as $P$ acts as the identity on the range of $Q$,
	\begin{align*}
		0 \le Qx = PQx \le Px = 0.
	\end{align*}
	Hence, $Qx = 0$. Now we argue that this implies $\ker P \subseteq \ker Q$. In fact, we only have to note that every vector in $\ker P$ is a linear combination of two positive vectors in $\ker P$ since $X_+$ is generating in $X$ and since $\ker P$ is the range of the positive projection $\id_X - P$.
	
	Since our assumptions on $P$ and $Q$ are symmetric, we conclude that also $\ker Q \subseteq \ker P$, so $P$ and $Q$ do not only have the same range, but also the same kernel. This proves $P = Q$, as claimed.
\end{proof}

As a consequence of the preceding proposition, a vector subspace $V$ of $X$ can only be the range of at most one band projection $P \in \L(X)$; in this case, we call $P$ \emph{the} band projection onto $V$. Now we can define the following class of ordered Banach spaces that we mentioned above.

\begin{definition} \label{def:band-in-bidual}
	Let $X$ be an ordered Banach space with generating and normal cone and consider $X$ as a subspace of its bidual $X''$ by means of evaluation. We say that $X$ is \emph{a projection band in its bidual} if there exists a band projection $P \in \L(X'')$ with range $X$.
\end{definition}

A Banach lattice $X$ is a projection band in its bi-dual if and only if every norm-bounded increasing sequence (equivalently: net) in $X$ converges in norm; such spaces are called \emph{KB-spaces}; \cite[Definition~2.4.11 and Theorems~2.4.12 and~1.2.9]{Meyer-Nieberg1991} and \cite[Example~7 on p.\,92 and Proposition~II.5.15]{Schaefer74}. For instance, every AL-Banach lattice is a KB-space. On ordered Banach spaces, though, the situation is more complicated -- compare Remark~\ref{rem:kb-for-non-lattices}. Important examples of ordered Banach spaces which are projection bands in their biduals are preduals of
von Neumann-algebras (see \cite[pp.~126--127]{Takesaki1979} or \cite[Proposition 1.17.7]{Sakai1971}).

Ordered Banach spaces which are projection bands in their biduals have quite nice regularity properties as the following proposition shows:

\begin{proposition} \label{prop:band-in-bidual-implies-weakly-compact-intervals}
	Let $X$ be an ordered Banach space with generating and normal cone. If $X$ is a projection band in its bidual, then every order interval in $X$ is weakly compact.
\end{proposition}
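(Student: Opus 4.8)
The plan is to reduce the claim to showing that the order interval $[0,z]_X$ is weakly compact for an arbitrary $z \in X_+$, since a nonempty order interval $[x,y]_X$ equals the translate $x + [0,y-x]_X$ and weak compactness is preserved by translations (the empty interval being trivially compact). Fix such a $z$. I view $X$ as a subspace of $X''$ via the bipositive evaluation map $j$ and work with the order interval $[0,jz]_{X''}$ of the bidual. The point is that the weak$^*$ topology $\sigma(X'',X')$ of $X''$, restricted to $j(X)$, is carried by $j$ precisely onto the weak topology $\sigma(X,X')$ of $X$; hence, once I show that $[0,jz]_{X''}$ is a weak$^*$-compact subset of $X''$ which happens to lie inside $j(X)$ and to coincide with $j([0,z]_X)$, it will follow that $[0,z]_X$ is weakly compact.

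The crucial observation is that the range $j(X)$ of the band projection $P \in \L(X'')$ supplied by the hypothesis is an order ideal of $X''$: if $x'' \in X''$ satisfies $0 \le x'' \le w''$ for some $w'' \in j(X)$, then, since $\id_{X''} - P$ is a positive operator which annihilates $w'' = Pw''$, we get $0 \le (\id_{X''} - P)x'' \le (\id_{X''} - P)w'' = 0$, and properness of the cone $X''_+$ forces $(\id_{X''}-P)x'' = 0$, i.e.\ $x'' = Px'' \in j(X)$. Applying this with $w'' = jz$ yields $[0,jz]_{X''} \subseteq j(X)$; combined with bipositivity of $j$ (which gives $j([0,z]_X) \subseteq [0,jz]_{X''}$ and, conversely, $j^{-1}(x'') \in [0,z]_X$ for every $x'' \in [0,jz]_{X''}$), this proves $[0,jz]_{X''} = j([0,z]_X)$.

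It then remains to see that $[0,jz]_{X''}$ is weak$^*$-compact. It is weak$^*$-closed, because $X''_+ = \{x'' \in X'' : \langle x'', x'\rangle \ge 0 \text{ for all } x' \in X'_+\}$ is an intersection of weak$^*$-closed half-spaces and $[0,jz]_{X''} = X''_+ \cap (jz - X''_+)$. It is norm-bounded, since by the previous paragraph it equals the isometric image $j([0,z]_X)$ of the order interval $[0,z]_X$, which is bounded because the cone $X_+$ is normal. Hence $[0,jz]_{X''}$ is weak$^*$-compact by the Banach--Alaoglu theorem, and the reduction described above completes the argument.

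I expect the only genuinely delicate step to be the ideal property $[0,jz]_{X''} \subseteq j(X)$; everything else is routine bookkeeping with the duality facts recalled above. It is worth stressing that this route deliberately avoids the question of whether $P$ is weak$^*$-to-weak$^*$ continuous (which need not be the case), using instead only that $P$ fixes $jz$ and that $\id_{X''} - P$ is positive.
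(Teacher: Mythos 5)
Your proof is correct and follows essentially the same route as the paper's: the key step in both is that the complementary projection $\id_{X''}-P$ is positive and annihilates the endpoints, which forces $[0,jz]_{X''}\subseteq j(X)$ and hence $[0,jz]_{X''}=j([0,z]_X)$, after which weak${}^*$-compactness of the interval in $X''$ transfers to weak compactness in $X$. You merely spell out some details the paper leaves implicit (the reduction to intervals of the form $[0,z]$, and the weak${}^*$-closedness and norm-boundedness behind the compactness of $[0,jz]_{X''}$).
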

\begin{proof}
	Let $P \in \L(X'')$ be the band projection onto $X$ and let $Q := \id_{X''}-P$ denote the complementary projection. Let $x,y \in X$. We first note that $[x,y]_X = [x,y]_{X''}$. Indeed, the inclusion ``$\subseteq$'' is obvious and, conversely, every $z'' \in [x,y]_{X''}$ fulfils $0 = Qx \le Qz'' \le Qy = 0$, so $z'' \in X$.
	
	Since $[x,y]_X = [x,y]_{X''}$ is compact in $X''$ with respect to the weak${}^*$-topology, it is compact with respect to the weak topology in $X$.
\end{proof}

\subsubsection*{Operator semigroups}

Let $X$ be a Banach space and let either $J=\N_0:=\{0,1,2,..\}$ or $J = [0,\infty)$. We call a family $\T = (T_t)_{t \in J}$ of bounded linear operators on $X$ a \emph{semigroup} -- or, more precisely, a one-parameter operator semigroup -- on $X$ if $T_0 = \id_X$ and $T_{s+t} = T_sT_t$ for all $s,t \in J$. The semigroup $\T$ is called \emph{bounded} if $\sup_{t \in J} \|T_t\| < \infty$. We call $\T$ \emph{(weakly) almost periodic} in case that the set $\{T_t: \, t \in J\}$ is relatively compact in $\L(E)$ with respect to the strong (weak) operator topology. Every almost periodic semigroup is weakly almost periodic and every weakly almost periodic semigroup is bounded by the uniform boundedness principle. A vector $x \in X$ is called a \emph{fixed point} or a \emph{fixed vector} of $\T$ if $T_tx = x$ for all $t \in J$; the subspace of $X$ consisting of all fixed points of $\T$ is called the \emph{fixed space} $\fix(\T)$ of $\T$. In case that $X$ is an ordered Banach space, we call the semigroup $\T$ \emph{positive} if $T_t$ is a positive operator for every $t \in T$. If $J = [0,\infty)$ and the mapping $J \ni t \mapsto T_t \in \L(X)$ is strongly continuous, then $\T$ is called a $C_0$-semigroup.

Now, let $\T = (T_t)_{t \in J}$ be a semigroup on a Banach space $X$ and assume that $\T$ is a $C_0$-semigroup in case that $J = [0,\infty)$. For each $t \in J \setminus \{0\}$ we denote by $A_t(\T)$ the $t$-th Ces{\`a}ro mean of $\T$, i.e.\ $A_t(\T) = \frac{1}{t} \sum_{k=0}^{t-1}T_k$ for each $t \in \N := \{1,2,3,\dots\}$ in case that $J = \N_0$, and $A_t(\T) = \frac{1}{t} \int_0^t T_s \, \dx s$ for each $t \in (0,\infty)$ in case that $J = [0,\infty)$ (where the integral is to be understood in the strong sense). The semigroup $\T$ is called \emph{mean ergodic} if $A_t(\T)$ converges strongly as $t \to \infty$. If $\T$ is weakly almost periodic, then it is mean ergodic. An operator $T \in \L(X)$ is called \emph{mean ergodic} if the semigroup $(T^n)_{n \in \N_0}$ is mean ergodic. The semigroup $\T$ is called \emph{Ces{\`a}ro bounded} if the net $(A_t(\T))_{t \in J \setminus \{0\}}$ of its Ces{\`a}ro means is bounded with respect to the operator norm, and an operator $T$ is called \emph{Ces{\`a}ro bounded} if the semigroup $(T^n)_{n \in \N_0}$ is so. For a detailed treatment of all these notions we refer for instance to the monographs \cite{Krengel1985, Engel2000,Emelyanov2007,Eisner2015}.

\subsection{Basic terminology: asymptotic domination}

As pointed out in the introduction, the present paper deals with the interplay between the long term behaviour of positive semigroups and the concept of \emph{asymptotic domination}. In this section we collect several basic results about this property, the most important one being certainly Lemma~\ref{lem:key-lemma}. Let us begin by recalling the definition of asymptotic domination, both for vector-valued and for operator-valued functions.

\begin{definition} \label{def:asymptotic-domination}
	Let $J$ be a subset of $\R$ which is not bounded above and let $X$ be an ordered Banach space.
	\begin{enumerate}[label=(\alph*)]
		\item Let $f, g: J \mapsto X_+$ be two mappings. We say that $g$ {\em dominates $f$ asymptotically} or that $f$ is {\em asymptotically dominated by $g$} if $\lim_{t \to \infty} \distPos(g(t)-f(t)) = 0$. We denote this property by $g \das f$ or by $f \asd g$.
		
		\item Let $\SS, \T: J \mapsto \mathcal{L}(X)_+$ be two mappings. We say that $\T$ {\em dominates $\SS$ asymptotically} or that $\SS$ is {\em asymptotically dominated by $\T$} if the mapping $t \mapsto \T(t)x$ dominates the mapping $t \mapsto \SS(t)x$ asymptotically for all $x \in X_+$. We denote this property by $\SS \das \T$ or by $\T \asd \SS$.
		
		\item Two mappings $f,g:J \mapsto X_+$ are called {\em asymptotically equivalent}, which we denote by $f \equ g$, if $f \asd g$ and $g \asd f$.
		
		Likewise, we call two mappings $\SS,\T: J \to \mathcal{L}(X)_+$ {\em asymptotically equivalent}, also denoted by $\SS \equ \T$, if $\SS \asd T$ and $\T \asd \SS$.
	\end{enumerate}
\end{definition}

We are mainly interested in the case where $J$ is one of the two sets $[0,\infty)$ and $\N_0$. When we consider Ces{\`a}ro means of semigroups, the cases $J = \N$ and $J = (0,\infty)$ will also occur. Yet, the particular choice of $J$ is not important for any of the basic properties of asymptotic domination that we consider in this section.

It is sometimes convenient to use the following convention: if $f,g: J \to X_+$ are mappings and $x \in X_+$ is a fixed vector, then we shall write $x \das f$ or $f \asd x$ to say that $f$ is asymptotically dominated by the constant function with value $x$; the notations $g \das x$ and $g \asd x$ are defined similarly. Likewise, we use the notations $P \das \T$ or $\T \asd P$ and $\SS \das P$ or $P \asd \SS$ for functions $\T,\SS: J \to \mathcal{L}(X)_+$ and for a fixed operator $P \in \mathcal{L}(X)_+$.

The following elementary characterisation of asymptotic domination is quite useful and we will often employ it tacitly. It is taken from \cite[Proposition~2.1.10]{Emelyanov2007}; for the convenience of the reader we include the proof.

\begin{proposition} \label{prop:asymptotic-domination-via-error-function}
	Let $J$ be a subset of $\R$ which is not bounded above, let $X$ be an ordered Banach space with generating cone and consider to mappings $f,g: J \to X_+$. The following assertions are equivalent:
	\begin{enumerate}
		\item $g$ asymptotically dominates $f$.
		\item There exists a function $r: J \to X_+$ satisfying $\lim_{t \to \infty} r(t) = 0$ and $f(t) \le g(t) + r(t)$ for all $t \in J$.
	\end{enumerate}
\end{proposition}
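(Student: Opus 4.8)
The implication ``(ii)$\Rightarrow$(i)'' is the straightforward half, and my plan is to do it first and without even using that the cone is generating. Given a function $r$ as in~(ii), I would simply note that $g(t) + r(t) - f(t) \in X_+$ for every $t \in J$, so this vector is an admissible competitor in the infimum~\eqref{eq:distance-to-cone} defining $\distPos\bigl(g(t)-f(t)\bigr)$. This gives
$$
	\distPos\bigl(g(t)-f(t)\bigr) \le \bigl\| \bigl(g(t)-f(t)\bigr) - \bigl(g(t)+r(t)-f(t)\bigr) \bigr\| = \|r(t)\|,
$$
and the right-hand side tends to $0$ as $t \to \infty$ by the hypothesis on $r$, which is exactly~(i).

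For the converse implication ``(i)$\Rightarrow$(ii)'' — the part in which the generating cone enters — the plan is to construct the error function $r$ pointwise in $t$. Fix $t \in J$ and abbreviate $\delta_t := \distPos\bigl(g(t)-f(t)\bigr)$. If $\delta_t > 0$, I would use the definition of the infimum in~\eqref{eq:distance-to-cone} to choose a vector $y_t \in X_+$ with $\|g(t) - f(t) - y_t\| \le 2\delta_t$; if $\delta_t = 0$, then $g(t) - f(t)$ lies in the closure of $X_+$ and hence, since $X_+$ is closed, in $X_+$ itself, and I would set $y_t := g(t)-f(t)$. In either case the vector $w(t) := g(t) - f(t) - y_t$ satisfies $\|w(t)\| \le 2\delta_t$ and $g(t) - f(t) - w(t) = y_t \in X_+$.

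Next I would invoke the bounded decomposition property~\eqref{eq:bounded-decomposition} (due to Klee), which is available precisely because $X_+$ is generating: there are a constant $C > 0$, independent of $t$, and vectors $u(t), v(t) \in X_+$ with $w(t) = u(t) - v(t)$ and $\|u(t)\|, \|v(t)\| \le C\|w(t)\| \le 2C\delta_t$. Then
$$
	g(t) - f(t) + v(t) = y_t + w(t) + v(t) = y_t + u(t) \in X_+,
$$
that is, $f(t) \le g(t) + v(t)$ for every $t \in J$. Putting $r := v$ yields a map $r: J \to X_+$ with $f(t) \le g(t) + r(t)$ for all $t$, and $\|r(t)\| \le 2C\delta_t \to 0$ by~(i), so $\lim_{t\to\infty} r(t) = 0$, as required.

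The only genuinely non-trivial ingredient in this argument is the decomposition property~\eqref{eq:bounded-decomposition}, and it is exactly there that the hypothesis ``$X_+$ generating'' gets used; everything else is bookkeeping with the definitions. The one point that needs a little care is the case $\delta_t = 0$ in the pointwise construction, where one must fall back on the closedness of $X_+$ to obtain the bound $\|w(t)\| \le 2\delta_t$ (with $\delta_t = 0$) rather than merely $\|w(t)\| \le \varepsilon$ for arbitrary $\varepsilon > 0$.
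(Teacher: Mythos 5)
Your proposal is correct and follows essentially the same route as the paper: the easy direction uses $g(t)-f(t)+r(t)$ as a competitor for the infimum defining $\distPos$, and the converse applies Klee's bounded decomposition~\eqref{eq:bounded-decomposition} to the difference between $g(t)-f(t)$ and a near-optimal positive approximant, taking the negative part of that decomposition as the error function. The only cosmetic difference is that the paper avoids your case distinction at $\delta_t=0$ by choosing the approximant within $\distPos\bigl(g(t)-f(t)\bigr)+2^{-t}$ of the infimum.
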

\begin{proof}
	If~(ii) is fulfilled, then
	\begin{align*}
		\distPos\big(g(t)-f(t)\big)) \le \big\|[g(t) - f(t)] - [g(t) - f(t) + r(t)]\big\| = \|r(t)\| \to 0
	\end{align*}
	as $t \to \infty$, so~(i) holds; note that this implication does not depend on the assumption that $X_+$ be generating.
	
	Now, assume on the other hand that (i) is fulfilled. For every $t \in J$ there exists a vector $u(t) \in X_+$ satisfying $\big\|[g(t) - f(t)] - u(t)\big\| \le   \distPos\big(g(t)-f(t)\big)+ 2^{-t}$. Let the constant $C > 0$ be as in~\eqref{eq:bounded-decomposition}. For every $t \in J$ we can hence decompose the vector $[g(t) - f(t)] - u(t)$ as
	\begin{align*}
		[g(t) - f(t)] - u(t) = y(t) - z(t)
	\end{align*}
	where $y(t),z(t) \in X_+$ fulfil $\|y(t)\|, \|z(t)\| \le C \cdot \big(\distPos(g(t) - f(t))+2^{-t}\big)$. The function $r := z$ thus fulfils the properties in~(ii).
\end{proof}

We are mainly interested in asymptotic domination between two mappings $\SS,\T: J \to \mathcal{L}(X)_+$ in the special case where $\SS$ and $\T$ are positive operator semigroups. Before we give some examples where this phenomenon occurs, we prove a few simple facts for the relations $\asd$ and $\equ$.

\begin{proposition} \label{prop:asd-elementary-properties}
	Let $J$ be a subset of $\R$ which is not bounded above, let $X$ be an ordered Banach space and consider two mappings $f,g: J \to X_+$.
	\begin{enumerate}[label=(\alph*)]
		\item The relation $\asd$ is transitive and the relation $\equ$ is an equivalence relation.
		\item Let $(t_k)_{k \in \N_0}$ be a sequence in $J$ converging to $\infty$. If $g$ asymptotically dominates $f$, then the mapping $k \mapsto g(t_k)$ asymptotically dominates the mapping $\N_0 \ni k \mapsto f(t_k) \in X_+$.
		\item If $f(t)$ and $g(t)$ both converge to a vector $x \in X_+$ as $t \to \infty$, then $f \equ g \equ x$.
	\end{enumerate}
\end{proposition}
\begin{proof}
	This follows at once from Proposition \ref{prop:asymptotic-domination-via-error-function}.	
\end{proof}

Before proceeding with the theory, let us give a few simple examples of operator semigroups $\SS$ and $\T$ where asymptotic domination (and, in fact, even equivalence) occurs:

\begin{example} \label{ex:doubly-stochastic-in-dim-2}
	Let $X = \mathbb{R}^2$ be endowed with its usual order and let $A \in \R^{2\times 2}$ by given by
	$
		A =
		\begin{pmatrix}
			-1 & 1 \\
			1 & -1
		\end{pmatrix}
	$.
	For every $\alpha \ge 0$ the mapping $\SS_\alpha: [0,\infty) \to \R^{2 \times 2}$, given by
	\begin{align*}
		\SS_\alpha(t) := e^{t \alpha A} =
		\frac{1}{2}
		\begin{pmatrix}
			1 + \exp(-2\alpha t) & 1 - \exp(-2\alpha t)  \\
			1 - \exp(-2\alpha t) & 1 + \exp(-2\alpha t)
		\end{pmatrix}
	\end{align*}
	for all $t \ge 0$, defines a doubly stochastic semigroup on $\R^2$; in fact, it is not difficult to see that all doubly stochastic $C_0$-semigroups on $\R^2$ are of this form. If $\alpha > 0$, then the matrix $S_\alpha(t)$ converges to
	\begin{align*}
		P =
		\frac{1}{2}
		\begin{pmatrix}
			1 & 1 \\
			1 & 1
		\end{pmatrix}
	\end{align*}
	as $t \to \infty$. This implies that, for all values $\alpha,\beta > 0$, the functions $S_\alpha$ and $S_\beta$ are asymptotically equivalent. Yet, for $\alpha \not= \beta$, we neither have $S_\alpha(t) \ge S_\beta(t)$ nor $S_\alpha(t) \le S_\beta(t)$ for any time $t > 0$.
\end{example}

The subsequent examples constitute, in a sense, more general versions of Example~\ref{ex:doubly-stochastic-in-dim-2}.

\begin{examples} \label{exs:same-limit}
	Let $J = \N_0$ or $J = [0,\infty)$ and let $X$ be an ordered Banach space.
	
	(a) Let $\SS,\T: J \to \mathcal{L}(X)$ be two positive semigroups on $X$ for which the strong operator limits $\lim_{t \to\infty} \SS(t)$ and $\lim_{t \to \infty} \T(t)$ exist and coincide. Then $\SS$ and $\T$ are asymptotically equivalent.
	
	(b) Let $\T: J \to \mathcal{L}(X)$ be a positive semigroup on $X$ which converges strongly as $t \to \infty$ and let $\alpha,\beta > 0$. Let us define $\T_\alpha(t) = \T(\alpha t)$ and $\T_\beta(t) = \T(\beta t)$ for all $t \in J$. Since the strong operator limits of $\T_\alpha(t)$ and $\T_\beta(t)$ for $t \to \infty$ exist and coincide, it follows from~(a) that $\T_\alpha$ and $T_\beta$ are asymptotically equivalent.
\end{examples}

In Section~\ref{section:long-term-behaviour-of-asymp-dom-sg} we prove that several properties concerning the long term behaviour of operator semigroups are inherited via domination. The key to many of those results is the following simple, but very useful lemma.

\begin{lemma} \label{lem:key-lemma}
	Let $J$ be a subset of $\R$ which is not bounded above, let $X$ be an ordered Banach space and consider two mappings $f,g: J \to X_+$. If $f \asd g$, then we can find a strictly increasing sequence $(t_k)_{k \in \N_0} \subseteq J$ converging to $\infty$ and a vector $q \in X_+$ such that $f(t_k) \le g(t_k) + q$ for all $k \in \N_0$.
\end{lemma}
\begin{proof}
	Since $f \asd g$, Proposition~\ref{prop:asymptotic-domination-via-error-function} yields a mapping $r: J \to X_+$ which fulfils $\lim_{t \to \infty} r(t) = 0$ and $f(t) \le g(t) + r(t)$ for all $t \in J$. Now, choose a strictly increasing sequence $(t_k)_{k \in \N_0} \subseteq J$ which converges to $\infty$ and which fulfils $\|r(t_k)\| \le \frac{1}{2^k}$ for all $k \in \N_0$. Setting $q := \sum_{k \in \N_0} r(t_k)$, we obtain the assertion.
\end{proof}

We point out that the terminology of \emph{asymptotic domination} can, of course, also be generalised to directed sets $J$ instead of subsets of $\R$. However, the above Lemma~\ref{lem:key-lemma} generalises to this setting only if there exists a cofinal countable set in $J$. The following consequence of Lemma~\ref{lem:key-lemma} will be very useful in section \ref{section:long-term-behaviour-of-asymp-dom-sg}.

\begin{corollary} \label{cor:existence-of-cluster-points}
	Let $X$ be an ordered Banach space and let $f: \N_0 \to X_+$ be asymptotically dominated by a constant vector $x \in X_+$.
	\begin{enumerate}[label=(\alph*)]
		\item If every order interval in $X$ is weakly compact, then the sequence $(f(n))_{n \in \N_0}$ has a weakly convergent subsequence.
		\item If every order interval in $X$ is compact, then the sequence $(f(n))_{n \in \N_0}$ has a norm convergent subsequence.
	\end{enumerate}
\end{corollary}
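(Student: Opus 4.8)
The plan is to reduce both assertions to an application of Lemma~\ref{lem:key-lemma} followed by the relevant compactness hypothesis. Since $f \asd x$ (with $x$ viewed as a constant function), Lemma~\ref{lem:key-lemma} supplies a strictly increasing sequence $(n_k)_{k \in \N_0} \subseteq \N_0$ converging to $\infty$ and a vector $q \in X_+$ such that $f(n_k) \le x + q$ for all $k$. Because $f(n_k) \in X_+$ we also have $0 \le f(n_k)$, so the whole subsequence $(f(n_k))_{k \in \N_0}$ lies in the order interval $[0, x+q]$.

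For part~(a), the hypothesis says that $[0,x+q]$ is weakly compact. Weak compactness in a Banach space implies weak sequential compactness by the Eberlein--\v{S}mulian theorem, so the sequence $(f(n_k))_{k \in \N_0}$ has a subsequence converging weakly to some point of $[0,x+q]$; this subsequence is then a weakly convergent subsequence of the original sequence $(f(n))_{n \in \N_0}$. For part~(b), the hypothesis says $[0,x+q]$ is (norm) compact, hence norm sequentially compact (metric space), so $(f(n_k))_{k}$ already has a norm-convergent subsequence, again a subsequence of $(f(n))_{n \in \N_0}$.

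I do not expect a genuine obstacle here: the only slightly non-routine point is invoking Eberlein--\v{S}mulian to pass from weak compactness of the order interval to weak sequential compactness in part~(a), and one should make sure to phrase the conclusion as "$(f(n))_{n\in\N_0}$ has a weakly convergent subsequence" rather than claiming convergence of the full sequence. Everything else is bookkeeping: checking that $f(n_k) \ge 0$ together with $f(n_k) \le x+q$ puts the subsequence inside the order interval, and noting that a convergent subsequence of a subsequence is a convergent subsequence of the original sequence.

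\begin{proof}[Proof of Corollary~\ref{cor:existence-of-cluster-points}]
	Since $f$ is asymptotically dominated by the constant vector $x$, Lemma~\ref{lem:key-lemma} provides a strictly increasing sequence $(n_k)_{k \in \N_0} \subseteq \N_0$ converging to $\infty$ and a vector $q \in X_+$ such that $f(n_k) \le x + q$ for all $k \in \N_0$. As $f$ takes values in $X_+$, we have $0 \le f(n_k) \le x + q$, so $f(n_k) \in [0, x+q]$ for every $k \in \N_0$.

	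(a) If every order interval in $X$ is weakly compact, then $[0,x+q]$ is weakly compact; by the Eberlein--\v{S}mulian theorem it is weakly sequentially compact. Hence the sequence $(f(n_k))_{k \in \N_0}$ has a weakly convergent subsequence, which is in particular a weakly convergent subsequence of $(f(n))_{n \in \N_0}$.

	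(b) If every order interval in $X$ is (norm) compact, then $[0,x+q]$ is a compact metric space, hence sequentially compact. Thus $(f(n_k))_{k \in \N_0}$ has a norm convergent subsequence, which is a norm convergent subsequence of $(f(n))_{n \in \N_0}$.
\end{proof}
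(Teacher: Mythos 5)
Your proof is correct and follows essentially the same route as the paper: apply Lemma~\ref{lem:key-lemma} to trap a subsequence in the order interval $[0,x+q]$, then invoke Eberlein--\v{S}mulian for~(a) and sequential compactness of compact metric spaces for~(b). The paper's own proof is just a more compressed version of exactly this argument.
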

\begin{proof}
	According to Lemma~\ref{lem:key-lemma} we can find a subsequence $(f(n_k))_{k \in \N_0}$ of\\ $(f(n))_{n \in \N_0}$ which is contained in an order interval in $X$. This clearly implies assertion~(b), and it also implies assertion~(a) due to the Eberlein--\v{S}mulian theorem \cite[Theorem~V.6.1]{dunford1958} or \cite[Corollary 2 in Paragraph~IV.11.1]{Schaefer66}).
\end{proof}

\section{Long term behaviour of asymptotically dominated semigroups} \label{section:long-term-behaviour-of-asymp-dom-sg}

\subsection{Almost periodicity via asymptotic domination}

Consider two positive operator semigroups $\T$ and $\SS$ on an ordered Banach space $X$ and assume that $\T$ asymptotically dominates $\SS$. This subsection is concerned with the question whether almost periodicity of the dominating semigroup $\T$ is inherited by the dominated semigroup $\SS$. In case that $X$ is a Banach lattice with order continuous norm the answer to this question is ``yes'' (under appropriate technical assumptions) as shown in \cite[Theorem~4.3]{Emelyanov2001}.

If $X$ is  a general ordered Banach space, say with generating and normal cone, than the situation seems to be more involved. One can, for instance, consider so-called \emph{uniformly order convex} spaces (cf.\ \cite[Definition~2.1]{Emelyanov2001}). This class contains the spaces with additive norm on the positive cone and the spaces whose norm is uniformly convex (cf.\ \cite[Example~2.2]{Emelyanov2001}). It was shown in \cite[Corollary~3.4]{Emelyanov2001} that, on such spaces, a version of almost periodicity (so-called strong asymptotic compactness, cf.\ \cite[the beginning of Section~3]{Emelyanov2001}) is inherited by asymotically dominated semigroups in case that the dominating semigroup is contractive. The question thus arises what happens if the dominating semigroup is not contractive but only bounded or if the underlying Banach space is not uniformly order convex. A result in this direction was given in \cite[Theorem~15]{Emelyanov2003} in the following setting: the space $X$ is now assumed to be \emph{ideally ordered}, meaning that every order interval in $X$ is weakly compact and that the positive cone $X_+$ is  \emph{strongly normal}, i.e.\ that the mapping $X_+ \times X_+ \ni (x,y) \mapsto \dist([0,x],[0,y])$ is continous (mere \emph{normality} of the cone is equivalent to the continuity of this mapping at the point $(0,0)$). Examples of ideally ordered Banach spaces include preduals of von Neumann algebras and the space of all self-adjoint compact linear operators on a given Hilbert space (cf.\ \cite[p.\ 12]{Emelyanov2003}); in fact, all order intervals in the latter space are even compact. If a positive, time-discrete and almost periodic semigroup $\T$ on an ideally ordered Banach space asymptotically dominates a positive semigroup $\SS$, then it is shown in \cite[Theorem~15]{Emelyanov2003} that $\SS$ is at least weakly almost periodic; if all order intervals $X$ are even norm compact, then $\SS$ is even almost periodic.

In the following theorem we generalise this result in that we remove the condition of strong normality of the positive cone.

\begin{theorem} \label{thm:almost-periodicity-via-asymptotic-domination}
	Let $J = \N_0$ or $J = [0,\infty)$ and let $X$ be an ordered Banach space with generating cone. Consider two positive semigroups $\SS, \T: J \to \mathcal{L}(X)_+$ and assume that $\T$ asymptotically dominates $\SS$. In case that $J = [0,\infty)$, assume in addition that $\SS$ is a $C_0$-semigroup.
	\begin{enumerate}[label=(\alph*)]
		\item If $\T$ is almost periodic and all order intervals in $X$ are weakly compact, then $\SS$ is weakly almost periodic.
		\item If $\T$ is almost periodic and all order intervals in $X$ are compact, then $\SS$ is almost periodic.
	\end{enumerate}
\end{theorem}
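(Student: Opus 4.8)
The plan is to show that every orbit of $\SS$ is relatively (weakly) compact, which by a standard argument suffices for (weak) almost periodicity. Since $\T$ is almost periodic, each orbit $\{\T(t)x : t \in J\}$ is relatively compact in norm, hence bounded; together with almost periodicity one knows that $\T$ is mean ergodic and, more importantly for us, that for each $x \in X_+$ the set $\{\T(t)x : t \in J\}$ is contained in a norm-compact set $K_x$. The key point is that $K_x - K_x$, or rather the set $\{\T(t)x : t\in J\} \cup \{0\}$, lies in finitely many order intervals up to small perturbations; more concretely, relative norm compactness of the $\T$-orbit of $x$ means that for each $\epsilon > 0$ we can cover it by finitely many balls of radius $\epsilon$, and since the cone is normal (order intervals are bounded) one can compare $\T(t)x$ against a fixed vector. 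Actually the cleanest route is: fix $x \in X_+$ and a sequence $(s_n)$ in $J$ tending to $\infty$; I want to extract a (weakly) convergent subsequence of $(\SS(s_n)x)_{n}$.

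The first main step is to reduce to the time-discrete situation along a sequence. Given $x \in X_+$ and $(s_n) \subseteq J$ with $s_n \to \infty$, apply asymptotic domination $\SS \asd \T$ to the vector $x$: the function $t \mapsto \SS(t)x$ is asymptotically dominated by $t \mapsto \T(t)x$. Now invoke Lemma~\ref{lem:key-lemma}: passing to a subsequence of $(s_n)$ (still called $(s_n)$, strictly increasing and tending to $\infty$), there is a vector $q \in X_+$ with
\begin{equation*}
	\SS(s_n)x \le \T(s_n)x + q \qquad \text{for all } n \in \N_0.
\end{equation*}
Since $\T$ is almost periodic, the orbit $\{\T(t)x : t \in J\}$ is relatively norm-compact, so after passing to a further subsequence we may assume $\T(s_n)x \to y$ in norm for some $y \in X_+$. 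Then for every $\epsilon > 0$ there is $N$ with $\T(s_n)x \le y + \epsilon e$ for $n \ge N$ — wait, this needs a dominating unit, which we do not have; instead argue directly with the error function. By Proposition~\ref{prop:asymptotic-domination-via-error-function} applied to the two sequences $n \mapsto \SS(s_n)x$ and $n \mapsto \T(s_n)x$ (using Proposition~\ref{prop:asd-elementary-properties}(b)), there is $r_n \to 0$ in norm with $\SS(s_n)x \le \T(s_n)x + r_n$; combining with $\T(s_n)x \to y$, we see that $\SS(s_n)x \le y + (r_n + (\T(s_n)x - y)^{+\text{-part}})$, i.e. $\SS(s_n)x$ is asymptotically dominated by the constant vector $y$. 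Hence the sequence $n \mapsto \SS(s_n)x \in X_+$ is asymptotically dominated by a constant, and Corollary~\ref{cor:existence-of-cluster-points} applies: in case (a) it has a weakly convergent subsequence, in case (b) a norm-convergent subsequence.

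The second step turns this orbit-compactness into (weak) almost periodicity of $\SS$. In case (b), what we have shown is: for each $x \in X_+$, every sequence in the orbit $\{\SS(t)x : t \in J\}$ has a norm-convergent subsequence, so the orbit of each positive vector is relatively norm-compact; since $X_+$ is generating, every $x \in X$ is a difference of two positive vectors (indeed with control of norms via \eqref{eq:bounded-decomposition}), so every orbit $\{\SS(t)x : t \in X\}$ — rather $x \in X$ — is relatively norm-compact, and $\SS$ is bounded by the uniform boundedness principle. Relative strong compactness of $\{\SS(t) : t \in J\}$ in $\mathcal{L}(X)$ then follows from relative compactness of all orbits together with boundedness, using that a pointwise relatively compact, bounded family of operators on a Banach space is relatively compact in the strong operator topology on bounded sets — here one must be slightly careful when $J = [0,\infty)$ and use strong continuity of $\SS$ to get relative compactness of the whole continuous orbit rather than just along sequences. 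Case (a) is identical with ``norm'' replaced by ``weak'' throughout, using that weakly relatively compact sets are bounded and that the argument via Eberlein--Šmulian already built into Corollary~\ref{cor:existence-of-cluster-points} delivers genuine weak relative compactness.

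The main obstacle I anticipate is the passage from ``orbits of positive vectors are relatively compact'' to ``$\SS$ is (weakly) almost periodic'', specifically the continuous-time bookkeeping: one has relative compactness along arbitrary sequences $(s_n) \to \infty$ of positive vectors, but almost periodicity requires relative compactness of $\{\SS(t) : t \in J\}$ in the strong (resp. weak) operator topology, which must also account for bounded $t$ — this is where strong continuity of the $C_0$-semigroup $\SS$ (assumed in the statement) is used to handle the compact part $t \in [0, M]$, while the tail $t \ge M$ is controlled by the argument above. A secondary technical point is making rigorous the claim that a sequence dominated asymptotically by a convergent sequence is dominated by a constant; but this is a routine consequence of Proposition~\ref{prop:asymptotic-domination-via-error-function} and the triangle inequality for $\distPos$ from Proposition~\ref{prop:distance-to-positive-cone}, so I do not expect real difficulty there.
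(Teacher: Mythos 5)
Your argument is correct and follows essentially the same route as the paper: fix $x\in X_+$ and a sequence of times, treat bounded times via (strong) continuity, and for times tending to infinity pass to a subsequence along which $\T(t_n)x$ converges in norm, observe that $\SS(t_n)x$ is then asymptotically dominated by the constant limit vector, and invoke Corollary~\ref{cor:existence-of-cluster-points}. Your brief detour via a ``positive part'' of $\T(s_n)x-y$ (which does not exist in a general ordered Banach space) is harmless, since you correctly replace it by the estimate $\distPos\bigl(y-\SS(s_n)x\bigr)\le \distPos\bigl(\T(s_n)x-\SS(s_n)x\bigr)+\|y-\T(s_n)x\|$ coming from Propositions~\ref{prop:distance-to-positive-cone} and~\ref{prop:asd-elementary-properties}.
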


Note that the assumption that all order intervals be weakly compact is for instance fulfilled in every reflexive Banach space ordered by a normal cone; an important example for such a space is the selfadjoint part of a noncommutative $L^p$-space ($1 < p < \infty$) with respect to a semi-finite trace. Other examples are AL-Banach lattices and preduals of von Neumann algebra since they are projection bands in their bi-dual, see Proposition \ref{prop:band-in-bidual-implies-weakly-compact-intervals}.

\begin{proof}[Proof of Theorem~\ref{thm:almost-periodicity-via-asymptotic-domination}]
	Let $x \in X_+$ be given and consider an arbitrary sequence $(y_n)_{n \in \N_0}=(S_{t_n}x)_n \in \N_0$ in the orbit $\{S_t x|\, t \in J\}$. We show that a subsequence of $(y_n)_{n \in \N_0}$ converges with respect to the weak topology in case~(a) and with respect to the norm topology in case~(b).
	
	 If $(t_n)_{n \in \N_0}$ is bounded, it possesses a norm convergent subsequence since the set $\{S_t x| \, t \in [0,t_0] \cap J\}$ is compact  for every $t_0 \ge 0$ (here we need strong continuity of $\SS$ in case that $J = [0,\infty)$). So assume that the sequence $(t_n)_{n \in \N_0}$ is unbounded. After passing to a subsequence, we may then assume that $t_n \to \infty$. Passing once again to a subsequence, we may thus assume that the sequence $(\T_{t_n} x)_{n \in \N_0}$ is norm convergent in $X$. Since the mapping $n \mapsto S_{t_n}x$ is asymptotically dominated by the mapping $n \mapsto T_{t_n}x$, we conclude that $n \mapsto S_{t_n}x$ is in fact asymptotically dominated by the constant vector $y := \lim_{n \to \infty} T_{t_n}x$. Hence, the assertion follows from Corollary~\ref{cor:existence-of-cluster-points}.
\end{proof}

As is well known, every weakly almost periodic semigroup admits a so-called \emph{Jacobs--deLeeuw--Glicksberg} decomposition; see for instance \cite[Section~2.4]{Krengel1985}, \cite[Section~V.2]{Engel2000} or \cite[Section~16.3]{Eisner2015} where this is explained in great detail. In the situation of the above theorem we obtain additional information about this decomposition:

\begin{corollary}
	Suppose that we are in the situation of Theorem~\ref{thm:almost-periodicity-via-asymptotic-domination} and that the assumptions of~(a) are fulfilled. Denote by $P_{\T}$ and $P_{\SS}$ the projections onto the spaces of reversible elements for $\T$ and $\SS$ which are given by the Jacobs--de Leeuw--Glicksberg decomposition. Then $P_{\T} \ge P_{\SS} \ge 0$; in particular, $\operatorname{rank} P_{\T} \ge \operatorname{rank} P_{\SS}$.
\end{corollary}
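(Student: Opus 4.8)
The plan is to realise the two Jacobs--de Leeuw--Glicksberg projections as operator limits of $\T$ and $\SS$ taken along one and the same net of times, and then to transport the asymptotic domination through this limit by using that $\distPos$ is weakly lower semicontinuous. As a preliminary remark note that, since $\SS$ is weakly almost periodic by part~(a) of Theorem~\ref{thm:almost-periodicity-via-asymptotic-domination}, the Jacobs--de Leeuw--Glicksberg decomposition is available for both semigroups, and $P_{\T}$ and $P_{\SS}$ lie in the closures of $\{T_t : t \in J\}$ and $\{S_t : t \in J\}$, respectively, with respect to the weak operator topology. As $X_+$ is norm-closed and convex, hence weakly closed, the set of positive operators is closed in the weak operator topology; therefore $P_{\T} \ge 0$ and $P_{\SS} \ge 0$.

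The crucial step is to produce a net $(t_\alpha)$ in $J$ with $t_\alpha \to \infty$ such that $T_{t_\alpha} \to P_{\T}$ in the strong operator topology and $S_{t_\alpha} \to P_{\SS}$ in the weak operator topology. To this end I would pass to the compact semitopological semigroup $K := \overline{\{T_t : t \in J\}} \times \overline{\{S_t : t \in J\}}$, where the first factor carries the strong and the second the weak operator topology (on the first factor these two topologies coincide, since $\T$ is almost periodic), and I would let $G$ be the closure of the subsemigroup $\{(T_t, S_t) : t \in J\}$ in $K$. For every $t_0$, the set $\overline{\{(T_t, S_t) : t \ge t_0\}}$ is a closed two-sided ideal of $G$ and hence contains the minimal ideal $M_G$ of $G$. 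On the other hand, the two coordinate projections are continuous surjective homomorphisms of $G$ onto $\overline{\{T_t\}}$ and $\overline{\{S_t\}}$, so they map $M_G$ onto the minimal ideals of these semigroups; for a weakly almost periodic operator semigroup this minimal ideal is a group whose unique idempotent is the Jacobs--de Leeuw--Glicksberg projection. Since $M_G$ contains idempotents, it follows that $(P_{\T}, P_{\SS}) \in M_G \subseteq \overline{\{(T_t, S_t) : t \ge t_0\}}$ for all $t_0$, and a standard diagonal argument yields the desired net.

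With the net at hand the proof concludes quickly. Fix $x \in X_+$. Because $\T$ asymptotically dominates $\SS$ and $t_\alpha \to \infty$, we have $\distPos(T_{t_\alpha} x - S_{t_\alpha} x) \to 0$; at the same time $T_{t_\alpha} x \to P_{\T} x$ in norm and $S_{t_\alpha} x \to P_{\SS} x$ weakly, so $T_{t_\alpha} x - S_{t_\alpha} x \to P_{\T} x - P_{\SS} x$ weakly. As $\distPos$ is the distance to the closed convex set $X_+$, it is convex and, by Proposition~\ref{prop:distance-to-positive-cone}(c), norm-continuous, hence weakly lower semicontinuous; therefore $\distPos(P_{\T} x - P_{\SS} x) \le \liminf_\alpha \distPos(T_{t_\alpha} x - S_{t_\alpha} x) = 0$, i.e.\ $P_{\T} x - P_{\SS} x \in X_+$. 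Since $x \in X_+$ was arbitrary and $X_+$ is generating, this means $P_{\T} - P_{\SS} \ge 0$, and together with the preliminary remark we obtain $P_{\T} \ge P_{\SS} \ge 0$. For the rank statement I would show $\operatorname{ran} P_{\SS} \cap \ker P_{\T} = \{0\}$: given $v$ in this intersection, write $v = v_1 - v_2$ with $v_1, v_2 \in \operatorname{ran} P_{\SS} \cap X_+$ (possible as $\operatorname{ran} P_{\SS} \cap X_+$ generates $\operatorname{ran} P_{\SS}$), set $w := P_{\T} v_1 = P_{\T} v_2$, observe $v_i = P_{\SS} v_i \le P_{\T} v_i = w$ so that $w - v_i \in X_+ \cap \ker P_{\T}$, whence $0 \le P_{\SS}(w - v_i) \le P_{\T}(w - v_i) = 0$ and thus $P_{\SS} w = v_i$ for $i = 1, 2$, giving $v = 0$. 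Consequently $P_{\T}$ maps $\operatorname{ran} P_{\SS}$ injectively into $\operatorname{ran} P_{\T}$, so $\operatorname{rank} P_{\SS} \le \operatorname{rank} P_{\T}$.

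The step I expect to be the main obstacle is the construction of the common net: this is where one has to invoke the structure theory behind the Jacobs--de Leeuw--Glicksberg decomposition (the minimal ideal of the closure being a group whose identity is the projection) and to deal with the strong and weak operator topologies at the same time, which is precisely what the auxiliary product semigroup $G$ is designed to handle. Once the net is available, transporting the domination is routine (this is where the weak lower semicontinuity of $\distPos$ enters), and the rank inequality is a short order-theoretic addendum.
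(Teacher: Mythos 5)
Your proof is correct and follows essentially the same route as the paper: the paper's proof merely asserts that ``the construction of the decomposition together with the asymptotic domination'' yields $P_\T \ge P_\SS \ge 0$ and cites \cite[Proposition~2.1.3]{Emelyanov2007} for the rank estimate, while you supply the details (the common net via the product enveloping semigroup, transporting the domination via weak lower semicontinuity of $\distPos$, and a direct order-theoretic proof of the rank inequality). All of these details check out.
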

\begin{proof}
	It follows from Theorem~\ref{thm:almost-periodicity-via-asymptotic-domination} that $\SS$ is weakly almost periodic and thus admissible for the Jacobs--de Leeuw--Glicksberg decomposition. The construction of this decomposition together with the asymptotic domination implies that $P_{\T} \ge P_{\SS}$. Moreover, $P_{\SS} \ge 0$ by the positivity of $\SS$. Finally, the rank estimate follows from \cite[Proposition~2.1.3]{Emelyanov2007}.
\end{proof}

\subsection{Mean ergodic theorems}

In this subsection we prove mean ergodic theorems for positive operators on ordered Banach spaces with weakly compact order intervals. The assertion of our first result, Theorem~\ref{thm:power-of-mean-ergodic-operator-is-mean-ergodic}, is not directly related to asymptotic domination; however, the concept of asymptotic domination occurs in its proof.

Recall that it might happen that a positive operator $T$ on a Banach lattice is mean ergodic, while not all the powers of $T$ are mean ergodic. Indeed, Sine \cite{Sine1976} constructed an example of a Koopman operator $T$ on the space of continuous functions on a certain compact Hausdorff space such that $T$ is mean ergodic, while $T^2$ is not; further counterexamples can be found in \cite{Emelyanov2009} and in \cite{GerlachWAP}.

However, the situation changes if we require additional regularity properties from the space $X$. If, for instance, $X$ is a Banach lattice with order continuous norm and $T \in \L(E)$ is positive and mean ergodic, then $T^r$ is indeed mean ergodic for every $r \in \N$. This was proved by Derriennic and Krengel in \cite[Proposition~4.5]{Derriennic1981}. In \cite[Theorem~12]{Emelyanov2003} this result was generalised to ideally ordered Banach spaces. In the next theorem we show that it is actually true on every ordered Banach space with weakly compact order intervals.

\begin{theorem} \label{thm:power-of-mean-ergodic-operator-is-mean-ergodic}
	Let $X$ be an ordered Banach space with generating cone and assume that all order intervals in $X$ are weakly compact. Let $T\in \L(X)$ be a mean ergodic positive operator and let $r \in \N$. Then $T^r$ is mean ergodic, too.
\end{theorem}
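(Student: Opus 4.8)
The statement is: if $X$ is an ordered Banach space with generating cone and weakly compact order intervals, $T \in \L(X)$ is positive and mean ergodic, and $r \in \N$, then $T^r$ is mean ergodic. The plan is to exploit the well-known decomposition one gets from mean ergodicity of $T$: writing $P := \lim_{n} A_n((T^k)_k)$ for the mean ergodic projection of $T$, one has $X = \fix(T) \oplus \overline{(\id - T)X}$, $P$ is the projection onto $\fix(T)$ along $\overline{(\id-T)X}$, and $\fix(T) = \fix(T^r)$ always holds trivially (a fixed vector of $T$ is fixed by $T^r$; conversely we will need weak compactness, so let me be careful). Actually $\fix(T) \subseteq \fix(T^r)$ is immediate but the reverse inclusion is exactly what can fail on bad spaces — so the real content is to show $\fix(T^r) = \fix(T)$ and that $T^r$ is again mean ergodic with projection $P$.

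First I would reduce to showing that $A_n((T^{rn})_n)$ converges strongly; by a standard criterion, for a Ces\`aro bounded operator $S := T^r$ this is equivalent to showing that for every $x \in X$ the orbit $(A_n((S^k)_k) x)_n$ has a weakly convergent subsequence whose limit is a fixed point of $S$, together with $\fix(S) + \overline{(\id - S)X}$ being dense — and in fact, once one knows $S$ is Ces\`aro bounded and every orbit of the Ces\`aro means clusters weakly, mean ergodicity of $S$ on the part where it clusters follows from the mean ergodic theorem (Eberlein). So the task splits into: (1) $T^r$ is Ces\`aro bounded — this follows from mean ergodicity of $T$ via the identity expressing the Ces\`aro means of $T^r$ in terms of those of $T$, or simply because $T$ mean ergodic implies $(T^n)$ is bounded, hence $(T^{rn})$ is bounded, hence $T^r$ is Ces\`aro bounded; (2) produce weak cluster points of $(A_n((T^{rk})_k)x)_n$ lying in $\fix(T^r)$.

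For step (2), here is where asymptotic domination and weak compactness of order intervals enter, mirroring the proof of Theorem~\ref{thm:almost-periodicity-via-asymptotic-domination}. Fix $x \in X_+$ (the general case follows since the cone is generating and the Ces\`aro means are uniformly bounded). The Ces\`aro means $A_n((T^k)_k)x = Px + o(1)$ converge in norm to $Px \in \fix(T) \cap X_+$. One checks, by an averaging/summation-by-parts estimate, that $A_n((T^{rk})_k)x$ is asymptotically dominated by a constant multiple of $Px$: indeed each block $\frac{1}{n}\sum_{k=0}^{n-1} T^{rk} x$ can be compared to $\frac{1}{rn}\sum_{j=0}^{rn-1} T^{j} x = A_{rn}((T^k)_k) x \to Px$, with the difference controlled using positivity of $T$ and boundedness of the orbit, so $(A_n((T^{rk})_k)x)_n \asd c\,Px$ for a suitable constant $c$. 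By Corollary~\ref{cor:existence-of-cluster-points}(a) — applicable because order intervals are weakly compact and, via Lemma~\ref{lem:key-lemma}, a subsequence sits inside a fixed order interval — the sequence $(A_n((T^{rk})_k)x)_n$ has a weakly convergent subsequence. Any weak cluster point $y$ of the Ces\`aro means of $S = T^r$ automatically satisfies $Sy = y$ by the usual argument $(\id - S)A_n((S^k)_k) = \frac{1}{n}(\id - S^n) \to 0$ strongly (using boundedness), so $y \in \fix(T^r)$. The mean ergodic theorem (Eberlein's form) then gives that $A_n((S^k)_k)x$ in fact converges weakly, and for a bounded operator weak convergence of the Ces\`aro means on a dense set, plus uniform boundedness, upgrades to strong convergence on all of $X$ once one verifies the limit defines a bounded projection — which it does, being the identity on $\fix(S)$ and zero on $(\id - S)X$, with these two subspaces spanning a dense subspace (density of $\fix(S) + \overline{(\id - S)X}$ is precisely the existence of the weak cluster points just established, combined with the mean ergodic theorem).

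**Main obstacle.** The delicate point is the asymptotic domination estimate $(A_n((T^{rk})_k)x)_n \asd c\,Px$: one must show that $\frac{1}{n}\sum_{k=0}^{n-1} T^{rk}x$ does not escape the order interval generated by a fixed multiple of $Px$, even though individual iterates $T^{rk}x$ need not be order-bounded. The trick should be to write $\frac{1}{n}\sum_{k=0}^{n-1}T^{rk}x \le \frac{r}{n}\sum_{j=0}^{rn-1} T^j x = r\, A_{rn}((T^k)_k)x$ using positivity (each $T^{rk}x$ appears, and the remaining terms $T^jx$ with $j$ not a multiple of $r$ are positive), and then use $A_{rn}((T^k)_k)x \to Px$ in norm, so that $\distPos\big(r\,Px + \varepsilon_n \cdot \mathbf{1}\text{-type correction} - \frac{1}{n}\sum_k T^{rk}x\big) \to 0$ with an explicit error from the convergence $A_{rn}((T^k)_k)x \to Px$. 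Making this clean — in particular identifying the correct constant and handling the error term so that Proposition~\ref{prop:asymptotic-domination-via-error-function}(ii) applies — is the one spot requiring genuine care; everything else is bookkeeping with the mean ergodic theorem and with Corollary~\ref{cor:existence-of-cluster-points}.
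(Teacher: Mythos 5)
Your proposal follows essentially the same route as the paper: the positivity inequality $\frac{1}{n}\sum_{k=0}^{n-1}T^{rk} \le \frac{1}{n}\sum_{k=0}^{rn-1}T^{k} = r\,A_{rn}\bigl((T^k)_k\bigr)$ yields both Ces\`aro boundedness of $T^r$ and asymptotic domination of its Ces\`aro means by $rP$, after which Corollary~\ref{cor:existence-of-cluster-points} and Eberlein's ergodic theorem finish the argument, exactly as in the paper. Two small slips to repair: mean ergodicity of $T$ does \emph{not} imply that $(T^n)_{n}$ is power bounded, so discard that alternative justification and keep the comparison inequality (where your prefactor should be $\tfrac{1}{n}$, not $\tfrac{r}{n}$); and the Eberlein hypothesis $n^{-1}T^{rn}x\to 0$ should be obtained from $m^{-1}T^{m}x \to 0$ via the identity $n^{-1}T^{rn} = r\,(rn)^{-1}T^{rn}$ rather than from an unavailable boundedness of the powers.
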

\begin{proof}
	First note that the positive cone $X_+$ is normal since every order interval is weakly compact. Define $\T := (T^n)_{n \in \N_0}$. Since $T$ is mean ergodic, the sequence $(A_n(\T))_{n \in \N}$ converges strongly to a positive projection $P$. Moreover, $\lim_{n \to \infty} n^{-1}T^n = 0$ holds with respect to the strong operator topology. The Ces{\`a}ro means of the semigroup $\SS := (T^{rn})_{n \in \N_0}$ fulfil the estimate
	\begin{align*}
		A_n(\SS) =\frac{1}{n} \sum_{k=0}^{n-1}T^{rk} \le \frac{r}{rn}\sum_{k=0}^{rn-1} T^k = r \cdot A_{rn}(\T)
	\end{align*}
	for each $n \in \N$. This implies that the operator $T^r$ is Ces{\`a}ro bounded (since the cone $X_+$ is normal and generating) and that the sequence $(A_n(\SS))_{n \in \N}$ is asymptotically dominated by the operator $r \cdot P$.
	
	Hence, it follows from Corollary~\ref{cor:existence-of-cluster-points} that, for every $x \ge 0$, the sequence $(A_n(\SS)x)_{n\in \N}$ has a weakly convergent subsequence. Since $n^{-1}T^{rn} = \frac{r}{rn}T^{rn}$ we conclude the $n^{-1}T^{rn}$ converges strongly to $0$ as $n \to \infty$, so the assertion follows from Eberlein's ergodic theorem (see \cite[Theorem~1.1.7]{Emelyanov2007} or \cite[Theorem~8.20]{Eisner2015}).
\end{proof}

The remaing results in this section deal with the question how mean ergodicity interacts with asymptotic domination. To prove those results, we need the following elementary lemma.

\begin{lemma} \label{lem:asymptotic-domination-carries-over-to-cesaro-means}
	Let $X$ be an ordered Banach space with generating cone and assume that all order intervals in $X$ are weakly compact. Let $J = \N_0$ or $J = [0,\infty)$ and consider two mappings $f, g: J \mapsto X_+$ which satisfy $g\asd f$.
	
	\begin{enumerate}
		\item[(a)] Let $J = \N_0$. Then the Ces{\`a}ro means of $g$ are asymptotically dominated by the Ces{\`a}ro means of $f$, i.e.\ we have
			\begin{align*}
				\left(\frac{1}{n}\sum_{k=0}^{n-1}g(k)\right)_{n \in \N} \asd \left(\frac{1}{n}\sum_{k=0}^{n-1}f(k)\right)_{n \in \N}.
			\end{align*}
		\item[(b)] Let $J = [0,\infty)$ and assume in addition that $f$ and $g$ are continuous. Then the Ces{\`a}ro means of $g$ are asymptotically dominated by the Ces{\`a}ro means of $f$, i.e.\ we have
			\begin{align*}
				\left(\frac{1}{t}\int_0^t g(s)ds\right)_{t \in (0,\infty)} \asd \left(\frac{1}{t}\int_0^t f(s)ds\right)_{t \in (0,\infty)}.
			\end{align*}
	\end{enumerate}
\end{lemma}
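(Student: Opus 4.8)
The plan is to reduce both parts to an application of Proposition~\ref{prop:asymptotic-domination-via-error-function}, using the error function it provides and then averaging it. Since $g \asd f$ and $X_+$ is generating, that proposition yields a map $r: J \to X_+$ with $g(t) \le f(t) + r(t)$ for all $t \in J$ and $\|r(t)\| \to 0$ as $t \to \infty$. For part~(a) I would set $R(n) := \frac{1}{n}\sum_{k=0}^{n-1} r(k) \in X_+$; averaging the pointwise inequality gives $\frac{1}{n}\sum_{k=0}^{n-1} g(k) \le \frac{1}{n}\sum_{k=0}^{n-1} f(k) + R(n)$ for all $n \in \N$, so by the ``(ii) $\Rightarrow$ (i)'' direction of Proposition~\ref{prop:asymptotic-domination-via-error-function} it suffices to show $R(n) \to 0$ in norm. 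That is a completely standard Cesàro-convergence fact: given $\epsilon > 0$ pick $N$ with $\|r(k)\| \le \epsilon$ for $k \ge N$, split the sum at $N$, and use that the first (fixed) block contributes at most $\frac{1}{n}\sum_{k=0}^{N-1}\|r(k)\| \to 0$. For part~(b), with $f,g$ (and hence $r$, which is obtained pointwise from $f$ and $g$) continuous, I would instead set $R(t) := \frac{1}{t}\int_0^t r(s)\,\mathrm{d}s$ and argue identically, splitting the integral at some $\tau$ beyond which $\|r(s)\| \le \epsilon$.

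One subtlety I would address is the measurability/continuity of the error function $r$ needed to form the Bochner integral in part~(b). The function $r$ produced by the proof of Proposition~\ref{prop:asymptotic-domination-via-error-function} is the ``$z$''-component of a Klee decomposition of $[g(t)-f(t)]-u(t)$, and there is no reason it should be continuous even when $f$ and $g$ are. The clean fix is not to invoke the proposition verbatim but to choose $r$ by hand in the continuous case: since the cone is normal we may assume (after passing to an equivalent norm, cf.\ the discussion after \eqref{eq:bounded-decomposition}) that the norm is monotone; then one can take $r$ directly related to $\distPos(f-g)$, or — even more simply — observe that we do not actually need a continuous $r$ at all, only that $t \mapsto R(t) := \frac{1}{t}\int_0^t r(s)\,\mathrm{d}s$ is well-defined and tends to $0$. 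Alternatively, and perhaps cleanest for the write-up, I would note that the hypothesis ``$f,g$ continuous'' lets us pick $r(s) := \distPos\big(f(s)-g(s)\big)\cdot e$ for a fixed $e \in X_+$ with suitable norm only if $X$ has an order unit, which is too strong; so I would instead fall back on choosing, for the continuous case, the function $r$ from Proposition~\ref{prop:asymptotic-domination-via-error-function} but replace the Bochner integral argument by a direct $\distPos$-estimate: $\distPos\!\Big(\frac{1}{t}\int_0^t(f-g)\Big) \le \frac{1}{t}\int_0^t \distPos\big(f(s)-g(s)\big)\,\mathrm{d}s$ using subadditivity and positive homogeneity of $\distPos$ (Proposition~\ref{prop:distance-to-positive-cone}) together with the fact that $\distPos$ passes under Bochner integrals (a limit of Riemann sums, each handled by (a)--(b) of that proposition, plus Lipschitz continuity~(c)).

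In fact this last observation simplifies both parts and avoids the Klee decomposition entirely. Since $\distPos$ is positively homogeneous, subadditive, and $1$-Lipschitz, it satisfies a Jensen-type inequality for averages and integrals: $\distPos\big(\frac{1}{n}\sum_{k=0}^{n-1} v_k\big) \le \frac{1}{n}\sum_{k=0}^{n-1}\distPos(v_k)$ and, in the continuous case, $\distPos\big(\frac{1}{t}\int_0^t v(s)\,\mathrm{d}s\big) \le \frac{1}{t}\int_0^t \distPos(v(s))\,\mathrm{d}s$, the latter by approximating the Bochner integral by Riemann sums and invoking Lipschitz continuity to pass to the limit. Applying this with $v_k = f(k)-g(k)$ (resp.\ $v(s) = f(s)-g(s)$) and using that $\distPos(f(t)-g(t)) \to 0$ by hypothesis, the right-hand side is a Cesàro / integral average of a nonnegative scalar function tending to $0$, hence tends to $0$; this is exactly the definition of the desired asymptotic domination. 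The main obstacle is therefore purely bookkeeping: making the passage ``$\distPos$ commutes with Bochner integration (as an inequality)'' rigorous in part~(b), which I would handle by a short Riemann-sum approximation together with parts~(b) and~(c) of Proposition~\ref{prop:distance-to-positive-cone}; everything else is the elementary fact that Cesàro averages of a null sequence (or null function) are null.
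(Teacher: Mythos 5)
Your final argument---applying the subadditivity, positive homogeneity, and Lipschitz continuity of $\distPos$ to obtain $\distPos\bigl(\frac{1}{t}\int_0^t (f(s)-g(s))\,\mathrm{d}s\bigr) \le \frac{1}{t}\int_0^t \distPos(f(s)-g(s))\,\mathrm{d}s$ and then using that Ces\`aro averages of a scalar null function are null---is exactly the paper's proof, and it is correct. You were also right to discard the first route via the error function $r$ from Proposition~\ref{prop:asymptotic-domination-via-error-function}, since that $r$ need not be measurable in the continuous-time case.
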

\begin{proof}
	The case of $J = \N_0$ follows immediately from the subadditivity of the functional $\distPos$, so assume that $J = [0,\infty)$. Then the subadditivity together with the continuity of $\distPos$ implies
	\begin{align*}
		0 & \le \distPos\left(\frac{1}{t} \int_0^t f(s)\dx s - \frac{1}{t} \int_0^t g(s) \dx s \right)\\
		  & = \distPos\left(\frac{1}{t}\int_0^t(f(s)-g(s)) \dx s \right) \le \frac{1}{t}\int_0^t\distPos(f(s)x -g(s)) \dx s.
	\end{align*}
	for each $t \in (0,\infty)$. This yields the assertion since $\lim_{s \to \infty}\distPos(f(s) - g(s)) = 0$ by assumption.
\end{proof}

Consider a positive operator $T$ on a Banach lattice $E$ and assume that $T$ is \emph{power order bounded}, meaning that, for each $x \in E$, the orbit $\{T^nx: \; n \in \N_0\}$ is contained in an order interval of $E$. If $E$ has order continuous norm, then it is not difficult to see that $T$ is mean ergodic. Next, we generalise this observation in several respects.

\begin{definition}
	Let $X$ be an ordered Banach space and let $J = \N_0$ or $J = [0,\infty)$. A positive semigroup $\T = (T_t)_{t \in J}$ on $E$ is called  {\em asymptotically order bounded} if, for every $x \in X_+$, there exists a vector $y \in X_+$ which asymptotically dominates the orbit mapping $J \ni t \to T_tx \in X$.
\end{definition}

Observe that, if $X_+$ is generating and normal, every positive semigroup on $X$ which is asymptotically ordered bounded is automatically bounded due to the uniform boundedness principle.

\begin{proposition} \label{prop:asymptotically-power-order-bounded-operators}
	Let $X$ be an ordered Banach space with generating cone and let $J = \N_0$ oder $J = [0,\infty)$. Let $\T = (T_t)_{t \in J}$ be a positive semigroup which is asymptotically order bounded and assume that $\T$ is strongly continuous in case that $J = [0,\infty)$. If all order intervals in $X$ are weakly compact, then $\T$ is mean ergodic.
\end{proposition}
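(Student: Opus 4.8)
The plan is to verify the hypotheses of Eberlein's ergodic theorem exactly as in the proof of Theorem~\ref{thm:power-of-mean-ergodic-operator-is-mean-ergodic}: that $\T$ is Ces{\`a}ro bounded, that $t^{-1}T_t \to 0$ strongly as $t \to \infty$, and that for every $x \in X$ the orbit $\{A_t(\T)x : t \in J \setminus \{0\}\}$ of Ces{\`a}ro means has a weak cluster point. First I would record that the cone $X_+$ is normal, since every order interval is weakly compact, hence norm bounded; therefore, as remarked just before the proposition, the asymptotically order bounded semigroup $\T$ is bounded, say $\|T_t\| \le M$ for all $t \in J$. In particular $\T$ is Ces{\`a}ro bounded, and $\|t^{-1}T_t\| \le M/t \to 0$, so $t^{-1}T_t$ converges to $0$ even in operator norm.

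Next, fix $x \in X_+$ and choose, by asymptotic order boundedness, a vector $y \in X_+$ which asymptotically dominates the orbit map $t \mapsto T_t x$ (in the case $J = [0,\infty)$ this map is continuous because $\T$ is a $C_0$-semigroup). Since the Ces{\`a}ro means of the constant function with value $y$ are again that constant function, Lemma~\ref{lem:asymptotic-domination-carries-over-to-cesaro-means} shows that the Ces{\`a}ro means $t \mapsto A_t(\T)x$ are asymptotically dominated by the constant vector $y$. By Corollary~\ref{cor:existence-of-cluster-points}(a) -- applied, in the case $J = [0,\infty)$, to the sequence $n \mapsto A_{n+1}(\T)x$, which is asymptotically dominated by $y$ thanks to Proposition~\ref{prop:asd-elementary-properties}(b) -- the sequence $(A_n(\T)x)_n$ then has a weakly convergent subsequence. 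For an arbitrary $x \in X$ I would write $x = u - v$ with $u,v \in X_+$ (possible since $X_+$ is generating) and pass to a common subsequence along which $(A_n(\T)u)_n$ and $(A_n(\T)v)_n$ both converge weakly; hence $(A_n(\T)x)_n$ has a weak cluster point for every $x \in X$.

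Finally I would invoke Eberlein's ergodic theorem (\cite[Theorem~1.1.7]{Emelyanov2007} or \cite[Theorem~8.20]{Eisner2015}): a Ces{\`a}ro bounded semigroup with $t^{-1}T_t \to 0$ strongly is mean ergodic as soon as, for every $x$, the Ces{\`a}ro means $A_t(\T)x$ admit a weak cluster point -- and that cluster point is then automatically their strong limit. This gives mean ergodicity of $\T$. The only genuinely delicate point I expect is the bookkeeping in the time-continuous case, namely checking that restricting the parameter to the integers is harmless both when applying Lemma~\ref{lem:asymptotic-domination-carries-over-to-cesaro-means} together with Corollary~\ref{cor:existence-of-cluster-points} and when feeding a weak cluster point of the sequence $(A_n(\T)x)_n$ into the net-indexed hypothesis of Eberlein's theorem; since the net $(A_t(\T)x)_t$ is bounded and a weakly convergent subsequence already provides a weak cluster point of the full net, this causes no real difficulty.
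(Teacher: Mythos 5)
Your proposal is correct and follows essentially the same route as the paper: normality of the cone (hence boundedness of $\T$) from weak compactness of order intervals, Lemma~\ref{lem:asymptotic-domination-carries-over-to-cesaro-means} to transfer the asymptotic domination by $y$ to the Ces\`aro means, Corollary~\ref{cor:existence-of-cluster-points}(a) for weak cluster points, and Eberlein's ergodic theorem to conclude. The extra bookkeeping you supply (continuity of the orbit map, the decomposition $x = u - v$, and restriction to integer times) is exactly the detail the paper leaves implicit.
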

\begin{proof}
	Since every order intervall in $X$ is weakly compact, it follows that the positive cone on $X$ is normal and hence, the semigroup $\T$ is bounded.
	
	Let $x \in X_+$. By assumption there exists a vector $y \in X_+$ which asymptotically dominates the orbit $(T_tx)_{t \in J}$. Then $(A_t(\T)x)_{t \in J \setminus \{0\}}$ is asymptotically dominated by $y$ according to Lemma~\ref{lem:asymptotic-domination-carries-over-to-cesaro-means}. Corollary \ref{cor:existence-of-cluster-points}(a) and Eberlein's ergodic theorem (see \cite[Theorem~1.1.7]{Emelyanov2007}, or \cite[Theorem~8.20]{Eisner2015} and \cite[Theorem~V.4.5(c)]{Engel2000}) yield the assertion.
\end{proof}

\begin{open_problem}
	In \cite[Theorem~1.1]{Emelyanov1999}, E.~Yu.~Emel'yanov and the second author proved a converse result: if every (asymptotically) order bounded positive operator on a {\em Banach lattice} is mean ergodic, then every order interval is weakly compact. It is an open problem whether this assertion holds on more general ordered Banach spaces.
\end{open_problem}

Finally, we consider the question whether mean ergodicity of a semigroup $\T$ is inherited by a semigroup $\SS$ which is asymptotically dominated by $\T$. Previous results on this question were given by Arendt and Batty on Banach lattices with order continuous norm \cite[Theorem~1.1]{Arendt1992} (where they assumed domination instead of asymptotic domination) and by Emel'yanov and the second author for time discrete semigroups ($J = \N_0$) on an ordered Banach space with strongly normal cone and weakly compact order intervals \cite[Theorem~14]{Emelyanov2003}; see also \cite[Theorem~2.1.11]{Emelyanov2007}. A similar result was proved for dominated $C_0$-semigroups on the same kind of ordered Banach spaces \cite[Theorem 2.1.12]{Emelyanov2007}. The following theorem shows that neither strong  normality of the cone nor the restriction to purely dominated semigroups is  needed for this result.

\begin{theorem} \label{thm:asymptotic-domination-and-mean-ergodicity}
	Let $X$ be an ordered Banach space with generating cone and assume that all order intervals in $X$ are weakly compact. Let $J = \N_0$ or $J = [0,\infty)$ and consider two positive semigroups $\SS = (S_t)_{t \in J}$ and $\T = (T_t)_{t \in J}$ on $X$ which we assume to be strongly continuous in case that $J = [0,\infty)$. Assume that $\T$ is mean ergodic and that it asymptotically dominates $\SS$.
	\begin{enumerate}
		\item[(a)] If $J = \N_0$, then $\SS$ is mean ergodic.
		\item[(b)] Let $J = [0,\infty)$. If $(T_t/t)_{t \in [1,\infty)}$ is bounded, then $\SS$ is mean ergodic.
		\item[(c)] In both cases~(a) and~(b) the corresponding mean ergodic projections $P_\SS$ and $P_\T$ satisfy $0 \le P_\SS \le P_\T$ and hence $\operatorname{rank} P_\SS \le \operatorname{rank} P_\T$.
	\end{enumerate}
\end{theorem}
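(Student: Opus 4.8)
The plan is to verify, for $\SS$, the three hypotheses of Eberlein's ergodic theorem -- Ces\`aro boundedness, the strong convergence $\tfrac1t S_t \to 0$ (resp.\ $\tfrac1n S_n \to 0$ if $J=\N_0$), and relative weak compactness of every Ces\`aro orbit $(A_t(\SS)x)_t$ -- and then to read off~(c) from the estimates produced along the way. Since $\T$ asymptotically dominates $\SS$ and, in the case $J=[0,\infty)$, both semigroups are $C_0$, Lemma~\ref{lem:asymptotic-domination-carries-over-to-cesaro-means} shows that the Ces\`aro means of $\SS$ are asymptotically dominated by those of $\T$, i.e.\ $\distPos\big(A_t(\T)x - A_t(\SS)x\big) \to 0$ for every $x \in X_+$; as $A_t(\T)x \to P_\T x$ in norm, Proposition~\ref{prop:asd-elementary-properties}(a) and~(c) give $A_t(\SS)x \asd P_\T x$ for all $x \in X_+$.

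From here Ces\`aro boundedness follows quickly: Proposition~\ref{prop:asymptotic-domination-via-error-function} produces $r_x \colon J \to X_+$ with $r_x(t) \to 0$ and $0 \le A_t(\SS)x \le P_\T x + r_x(t)$, so normality of $X_+$ bounds $\|A_t(\SS)x\|$ for all large $t$, while for small $t$ one uses that the relevant set of vectors $A_t(\SS)x$ is finite (if $J=\N_0$) resp.\ relatively norm compact, since $t \mapsto A_t(\SS)x$ is continuous on $(0,\infty)$ and tends to $x$ as $t \downarrow 0$ (if $J=[0,\infty)$); hence $\sup_t \|A_t(\SS)x\| < \infty$ for each $x \in X_+$, thus for each $x \in X$ because $X_+$ is generating, and the uniform boundedness principle gives $\sup_t \|A_t(\SS)\| < \infty$. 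Relative weak compactness of the Ces\`aro orbits is obtained in the same spirit: a bounded sequence of times yields a norm convergent subsequence of $(A_{t_n}(\SS)x)_n$ by the continuity/finiteness argument, while a sequence $t_n \to \infty$ makes $n \mapsto A_{t_n}(\SS)x$ asymptotically dominated by the constant vector $P_\T x$, so Corollary~\ref{cor:existence-of-cluster-points}(a) provides a weakly convergent subsequence; splitting a general $x \in X$ into positive parts finishes this point.

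The convergence $\tfrac1t S_t \to 0$ is the only place where~(a) and~(b) genuinely diverge, and I expect it to be the main obstacle. If $J=\N_0$, mean ergodicity of $\T$ already forces $\tfrac1n T^n \to 0$ strongly, because $T^{n-1} = n A_n(\T) - (n-1)A_{n-1}(\T)$ while $(A_n(\T))_n$ converges and is bounded; the estimate $0 \le S_n x \le T_n x + r_x(n)$ together with normality of $X_+$ then gives $\|\tfrac1n S_n x\| \to 0$ for $x \in X_+$, hence for all $x$. If $J=[0,\infty)$, the extra hypothesis that $(T_t/t)_{t \ge 1}$ be bounded is used to show first that $\tfrac1t T_t \to 0$ strongly: for $h \in (0,1]$ and $x \in X$ put $y_h := \int_0^h T_s x \,\dx s$; then $T_{t-h}y_h = t\,A_t(\T)x - (t-h)\,A_{t-h}(\T)x$, which divided by $t$ tends to $0$ since $A_{t-h}(\T)x$ and $A_t(\T)x$ both converge to $P_\T x$, so $\tfrac1t T_t y_h = T_h\big(\tfrac1t T_{t-h}y_h\big) \to 0$; thus $y_h$ lies in $V := \{y \in X : \tfrac1t T_t y \to 0\}$, which is a closed subspace precisely because $(T_t/t)_{t \ge 1}$ is bounded, and since $\tfrac1h y_h \to x$ as $h \downarrow 0$ we conclude $x \in V$. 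The domination estimate $0 \le S_t x \le T_t x + r_x(t)$ and normality then transfer this to $\SS$. Having verified all three hypotheses, Eberlein's ergodic theorem (see \cite[Theorem~1.1.7]{Emelyanov2007}) shows that $\SS$ is mean ergodic, which proves~(a) and~(b).

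For~(c), the mean ergodic projection $P_\SS = \lim_t A_t(\SS)$ (strong limit) is positive because each $A_t(\SS)$ is and $X_+$ is closed. For $x \in X_+$ the norm limit of $A_t(\T)x - A_t(\SS)x$ equals $P_\T x - P_\SS x$, and since $\distPos$ is continuous (Proposition~\ref{prop:distance-to-positive-cone}(c)) we get $\distPos(P_\T x - P_\SS x) = \lim_t \distPos\big(A_t(\T)x - A_t(\SS)x\big) = 0$, hence $P_\SS x \le P_\T x$ as $X_+$ is closed. Thus $0 \le P_\SS \le P_\T$, and the rank inequality follows from \cite[Proposition~2.1.3]{Emelyanov2007}.
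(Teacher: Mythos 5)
Your proof is correct and follows essentially the same route as the paper: transfer asymptotic domination to the Ces\`aro means via Lemma~\ref{lem:asymptotic-domination-carries-over-to-cesaro-means}, obtain weak cluster points of the Ces\`aro orbits from Corollary~\ref{cor:existence-of-cluster-points}(a), verify $S_t/t \to 0$ strongly by transferring it from $\T$ (using~\eqref{eq:conv-mean-ergodic} together with the boundedness hypothesis in case~(b)), and conclude with Eberlein's ergodic theorem. You merely spell out some steps the paper leaves implicit, such as the derivation of~\eqref{eq:conv-mean-ergodic} and the justification of $0 \le P_\SS \le P_\T$ via continuity of $\distPos$.
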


For the proof we use the following observation which can be checked by a few straightforward computations: if $\T = (T(t))_{t \in [0,\infty)}$ is a mean ergodic $C_0$-semigroup on a Banach space $X$, then
\begin{align}
	\label{eq:conv-mean-ergodic}
	1/t \cdot T(t) \int_0^s T(\tau)x \, d\tau \to 0 \quad \text{as} \quad t \to \infty
\end{align}
for all $x \in X$ and all $s > 0$.

\begin{proof}[Proof of Theorem~\ref{thm:asymptotic-domination-and-mean-ergodicity}]
	First note that it follows from the uniform boundedness principle that the cone $X_+$ in $X$ is normal. Fix $x \in X_+$. It follows from Lemma~\ref{lem:asymptotic-domination-carries-over-to-cesaro-means} and Corollary~\ref{cor:existence-of-cluster-points}(a) that the net $(A_t(\SS)x)_{t \in J \setminus \{0\}}$ possesses a weakly convergent subsequence.
	
	(a) Let $J = \N_0$. Since $\T$ is ergodic, $T_n/n$ converges strongly to $0$ as $n \to \infty$. As $(S_n x)_{n \in \N_0}$ is asymptotically dominated by $(T_n x)_{n \in \N_0}$, it follows that $(S_n/n) x \to 0$ as $n \to \infty$ and that $\SS$ is Ces{\`a}ro bounded (by Lemma~\ref{lem:asymptotic-domination-carries-over-to-cesaro-means}), so the assertion follows from Eberlein's Ergodic theorem \cite[Theorem~1.1.7]{Emelyanov2007}.
	
	(b) Let $J = [0,\infty)$. It follows from~\eqref{eq:conv-mean-ergodic} that the net $(T_t/t)_{t \in [1,\infty)}$ converges pointwise to $0$ on a dense subset of $X$. Since the net is bounded by assumption, it thus converges strongly to $0$ on the entire space $X$. Hence, $(S_t/t)_{t \in [1,\infty)}$ converges strongly to $0$, too; since $\SS$ is Ces{\`a}ro bounded by Lemma~\ref{lem:asymptotic-domination-carries-over-to-cesaro-means}, the assertion again follows from Eberlein's ergodic theorem~\cite[Theorem~1.1.7]{Emelyanov2007}.
	
	(c) Clearly, $0 \le P_\SS \le P_\T$. The rank estimate thus follows from~\cite[Proposition~2.1.3]{Emelyanov2007}.
\end{proof}

In part~(b) of the above theorem, the assumption that $(T_t/t)_{t \in (0,\infty)}$ be bounded follows automatically from the mean ergodicity of $\T$ in case that the norm on $X$ is additive on the positive cone (see Subsection~\ref{subsection:spaces-with-additive-norm} and the appendix for details about such spaces). Indeed, we have the following result:

\begin{proposition} \label{prop:mean-ergodicity-and-additive-norm}
	Let $X$ be an ordered Banach space with generating cone and assume that the norm is additive on the positive cone, meaning that $\|x+y\| = \|x\| + \|y\|$ for all $x,y \in X_+$. Let $\T = (T_t)_{t \in [0,\infty)}$ be a positive and mean ergodic $C_0$-semigroup on $X$. Then $(T_t/t)_{t \in [1,\infty)}$ is bounded (and hence, $T_t/t \to 0$ as $t \to \infty$ with respect to the strong operator topology).
\end{proposition}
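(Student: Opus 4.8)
The plan is to use additivity of the norm to convert the (automatic) boundedness of the Ces\`aro means of $\T$ into a linear-in-$t$ bound for $\|T_t\|$, with strong continuity serving to ``undo'' the time-averaging. As preparation I would fix three constants: since $\T$ is a $C_0$-semigroup, $M_1 := \sup_{r \in [0,1]}\|T_r\| < \infty$, and since $\T$ is mean ergodic the net $(A_t(\T))_{t>0}$ converges strongly, while for $t \in (0,1]$ one has $\|A_t(\T)x\| \le M_1\|x\|$; hence this net is pointwise bounded on $X$ and, by the uniform boundedness principle, $M := \sup_{t>0}\|A_t(\T)\| < \infty$. Finally, let $C>0$ be the decomposition constant from~\eqref{eq:bounded-decomposition}.

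The heart of the argument is an estimate for $\|T_t u\|$ with $u \in X_+$. The only place additivity enters is the fact that for a norm-continuous map $[a,b] \to X_+$ the norm of its integral equals the integral of its norm (approximate the integral by Riemann sums and apply additivity of the norm to each finite sum of elements of $X_+$; equivalently, apply the positive, hence continuous, functional $\phi$ with $\langle \phi, w\rangle = \|w\|$ for $w \in X_+$). Applied to $s \mapsto T_s u$ this gives
\begin{equation*}
	\int_0^t \|T_s u\| \dx s = \left\|\int_0^t T_s u \dx s\right\| = t\,\|A_t(\T)u\| \le tM\|u\| \qquad (t>0).
\end{equation*}
On the other hand, for $t \ge 1$ and any $s \in [t-1,t]$ the semigroup law yields $T_t u = T_{t-s}(T_s u)$ with $t-s \in [0,1]$, so $\|T_t u\| \le M_1 \|T_s u\|$; integrating this over $s \in [t-1,t]$ and using the previous display,
\begin{equation*}
	\|T_t u\| \le M_1 \int_{t-1}^t \|T_s u\| \dx s \le M_1 \int_0^t \|T_s u\| \dx s \le MM_1\, t\, \|u\|
\end{equation*}
for all $t \ge 1$ and $u \in X_+$. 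Writing an arbitrary $x \in X$ as $x = y - z$ with $y,z \in X_+$ and $\|y\|,\|z\| \le C\|x\|$ (by~\eqref{eq:bounded-decomposition}) then gives $\|T_t x\| \le \|T_t y\| + \|T_t z\| \le 2CMM_1\, t\,\|x\|$, i.e.\ $\|T_t/t\| \le 2CMM_1$ for every $t \ge 1$. This is the asserted boundedness of $(T_t/t)_{t \in [1,\infty)}$.

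For the parenthetical addendum I would argue exactly as in the proof of Theorem~\ref{thm:asymptotic-domination-and-mean-ergodicity}(b): by~\eqref{eq:conv-mean-ergodic} the net $(T_t/t)_{t \ge 1}$ converges to $0$ at every vector of the form $\tfrac1s\int_0^s T_\tau x \dx\tau$ with $x \in X$, $s > 0$, and such vectors are dense in $X$ since $\tfrac1h\int_0^h T_\tau x \dx\tau \to x$ as $h \to 0^+$; combined with the uniform bound just obtained, this forces $T_t/t \to 0$ strongly on all of $X$.

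I do not anticipate a genuine obstacle. The one delicate point is the identity ``$\|\int f\| = \int \|f\|$'' for $X_+$-valued integrands: this is precisely what the additivity hypothesis provides, and it would be false for a merely monotone norm, so this is where the hypothesis is essentially used. Everything else is bookkeeping with the decomposition property and the boundedness of the Ces\`aro means, and mean ergodicity is invoked only through that boundedness and through~\eqref{eq:conv-mean-ergodic}.
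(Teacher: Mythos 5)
Your proof is correct and rests on the same two pillars as the paper's own argument: the identity $\int \|T_s u\|\,ds = \|\int T_s u\,ds\|$ for $u \in X_+$ (which is exactly where the paper uses additivity, too) and the trick of averaging over a unit window $[t-1,t]$ and propagating forward with $M_1 = \sup_{r \in [0,1]}\|T_r\|$. The only organisational difference is that the paper argues by contradiction and feeds in mean ergodicity through~\eqref{eq:conv-mean-ergodic} with $s=1$, whereas you argue directly and feed it in through the uniform boundedness of the Ces{\`a}ro means; both are routine consequences of the hypothesis, and your version has the minor advantage of producing the explicit uniform bound $\|T_t/t\| \le 2CMM_1$.
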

\begin{proof}
	It suffices to prove that $\sup_{t \ge 1} 1/t \cdot \lVert T(t)x\rVert < \infty$ for all $x \in X_+$, so assume for a contradiction that there exists a vector $x \in X_+$ for which we have $\sup_{t \ge 1} 1/t \cdot \lVert T(t)x \rVert = \infty$.

	We define $M := \sup_{t \in [0,1]} \lVert T(t) \rVert < \infty$. By applying~\eqref{eq:conv-mean-ergodic} for $s = 1$ we can find a number $t_0 > 0$ such that
	\begin{align}
		\label{eq:mean-ergodicity-and-additive-norm-auxiliary}
		1/t \cdot \lVert \int_t^{t+1}T(\tau) x \,d\tau \rVert = 1/t \cdot \lVert T(t) \int_0^1 T(\tau)x \, d\tau \rVert \le 1
	\end{align}
	for all $t > t_0$.

	Since $\sup_{t \in [1,t_0+1]} 1/t \cdot \lVert T(t)x\rVert < \infty$, it follows from $\sup_{t \ge 1} 1/t \lVert T(t)x \rVert = \infty$ that there exists a time $t_1 > t_0+1$ for which we have $1/t_1 \cdot \lVert T(t_1)x \rVert > M$.
	
	We now set $t := t_1-1 > t_0$ and apply~\eqref{eq:mean-ergodicity-and-additive-norm-auxiliary}. Using that the norm is additive on $X_+$, we obtain
	\begin{align*}
		\int_{t_1-1}^{t_1} 1/{t_1} \cdot \lVert T(\tau)x \rVert \,d \tau \le 1/t \int_t^{t+1} \lVert T(\tau)x \rVert \,d \tau = \frac{1}{t} \lVert \int_t^{t+1} T(\tau) x \, d\tau \rVert \le 1.
	\end{align*}
	Hence, there exists a number $\tau \in [t_1-1,t_1]$ such that $1/t_1 \cdot \lVert T(\tau)x\rVert \le 1$ and thus,
	\begin{align*}
		1/t_1 \cdot \lVert T(t_1)x \rVert \le 1/t_1 \cdot \lVert T(t_1-\tau) \rVert \cdot \lVert T(\tau)x \rVert \le M.
	\end{align*}
	This is a contradiction.
\end{proof}

\section{Lower bound theorems} \label{section:lower-bound-theorems}

\subsection{Introduction}

In the previous chapter we proved theorems about the asymptotic behaviour of positive semigroups under  the assumption that they are asymptotically dominated by another semigroup. In the present chapter we change our perspective and assume instead that the semigroup orbits asymptotically dominate a time-independent vector, called \emph{lower bound},  and conclude that roughly spoken the semigroup converges. More precisely we make the following definition:

\begin{definition} \label{strong stability}
	Let $J = \N_0$ or $J = [0,\infty)$. A semigroup $\T = (T_t)_{t \in J}$ on a Banach space $X$ is called \emph{strongly convergent} if, for every $x \in X$, the limit $\lim_{t \to \infty}T_t x$ exists with respect to the norm topology on $X$.
\end{definition}

\begin{rem}\label{stabilityprojection}
	If a semigroup $\T = (T_t)_{t \in J}$ is strongly convergent, then its limit operator is always a projection which commutes with all operators $T_t$ and which we denote by $P_\T$.
\end{rem}

It was first observed by Lasota and Yorke that the existence of non-zero lower bounds implies strong stability for Markov semigroups  on $L^1$-spaces; see \cite[Theorem~2 and Remark~3]{Lasota1982} and \cite[Theorem~1.1]{Lasota1983}. Later on, this result was generalised to more general semigroups on $L^1$-spaces, see \cite[Theorem~2.1]{Zalewska-Mitura1994}, \cite[Corollary~5.1]{Komornik1993} and \cite[Section~3]{GerlachLB}. Convergence theorems in case that there exists a different lower bound for each orbit can be found in \cite[Theorem~1.1]{Ding2003} and \cite[Sections~4 and~5]{GerlachLB}.

We note that all the results quoted above are formulated in the setting of $L^1$-spaces (or, from a more abstract point of view, on AL-Banach lattices). The reason is that, on a Banach lattice $X$ and under mild technical assumptions, the existence of an operator which possesses a lower bound implies that the space is an AL-space with respect to an equivalent norm \cite[Theorem~3.7]{GerlachLB}. There are, however, some results available in the more general setting of non-commutative $L^1$-spaces, see \cite{Ajupov1987, Sarymsakov1987, Emelyanov2006}; we refer also to \cite[Section~3.3]{Emelyanov2007} for an overview of the latter topic.

In Subsection \ref{subsection:stability and lower bounds} we prove convergence theorems for semigroups possessing a lower bound on general ordered Banach space whose norm is additive on the positive cone. We start with a brief discussion of those spaces in Subsection~\ref{subsection:spaces-with-additive-norm} (see also the appendix where several non-trivial examples of such spaces are presented). In the subsequent two subsections we present our convergence results.

It is worthwhile noting that there is a lower bound type theorem of Soca{\l}a \cite[Theorem~1]{Socala2002} that also works on general ordered Banach spaces, but whose assumptions differ considerably from ours.

\subsection{AN--spaces (Spaces with additive norm)} \label{subsection:spaces-with-additive-norm}

The norm on an ordered Banach space $X$ is said to be \emph{additive on the positive cone} -- or, for short, \emph{additive on $X_+$} -- if $\|x+y\| = \|x\| + \|y\|$ for all $x,y \in X_+$.

\begin{definition}
	We call an ordered Banach space $(X,X_+)$ an \emph{AN--space} if the positive cone $X_+$ is generating and if the norm is additive on $X_+$.
\end{definition}

Note that the positive cone of an AN--space is \emph{normal} since an additive norm is monotone. \smallskip

A Banach lattice whose norm is additive on the positive cone is called an \emph{AL-Banach lattice} or, for short, an \emph{AL-space}. Every $L^1$-space over an arbitrary measure space is an AL-space and conversely, every AL-space is (isometrically lattice) isomorphic to an $L^1$-space. Typical examples of AN--spaces which are not  AL--Banach lattices  are preduals of von Neumann algebras or noncommutative $L^1$--spaces. Several further examples are presented in Appendix~\ref{appendix:example-additive-norm}.

Let $X$ be an AN--space. We note that $X$ is called a \emph{base norm space} if it fulfils, in addition, the equation
\begin{align} \label{eq:base-norm-formula}
	\|x\| = \inf \{\|y\| + \|z\|: \; y,z \in X_+, \; x = y-z\}
\end{align}
for all $x \in X$. We can always endow a given AN--space $X$ with an equivalent norm $\|\cdot\|_1$ which agrees with the given norm on $X_+$ (and hence is still additive on $X_+$) and which renders $X$ a base norm space; more precisely, we have the following proposition:

\begin{proposition} \label{prop:renorming-to-become-a-base-norm-space}
	Let $X$ be an AN--space. Then there exists an equivalent norm $\|\cdot\|_1$ on $X$ which possesses the following properties:
	\begin{enumerate}[label=(\alph*)]
		\item On $X_+$, the norm $\|\cdot\|_1$ coincides with the original norm.
\item The space $(X,\|\cdot\|_1)$ is even a base norm space.
	\end{enumerate}
\end{proposition}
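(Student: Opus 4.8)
The plan is to construct the new norm explicitly via the base--norm formula applied to the \emph{original} norm, and then to observe that this formula becomes self-reproducing once we know the two norms agree on the cone. So first I would define, for $x \in X$,
\[
	\|x\|_1 := \inf\{\|y\| + \|z\| : y, z \in X_+, \ x = y - z\}.
\]
Since $X_+$ is generating, the infimum is over a nonempty set, so $\|x\|_1 \in [0,\infty)$; moreover the bounded decomposition~\eqref{eq:bounded-decomposition} with its constant $C$ gives $\|x\|_1 \le 2C\|x\|$ for all $x$. Conversely, for any admissible decomposition $x = y - z$ the triangle inequality in $X$ yields $\|x\| \le \|y\| + \|z\|$, whence $\|x\| \le \|x\|_1 \le 2C\|x\|$. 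In particular $\|\cdot\|_1$ is positive definite and equivalent to $\|\cdot\|$, so $(X,\|\cdot\|_1)$ is again a Banach space in which $X_+$ is a proper, closed, convex, generating cone.

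Next I would check that $\|\cdot\|_1$ is a norm. Positive homogeneity is clear by rescaling decompositions. For the triangle inequality one takes $\epsilon$-optimal decompositions $x_i = y_i - z_i$ ($i = 1,2$), forms the admissible decomposition $x_1 + x_2 = (y_1+y_2) - (z_1+z_2)$, and uses subadditivity of $\|\cdot\|$ on each summand to get $\|x_1+x_2\|_1 \le \|y_1\| + \|y_2\| + \|z_1\| + \|z_2\| \le \|x_1\|_1 + \|x_2\|_1 + 2\epsilon$; letting $\epsilon \to 0$ gives subadditivity.

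The crucial step --- and the only place where the AN--space hypothesis is needed beyond the generating cone --- is to show that $\|\cdot\|_1$ agrees with the original norm on $X_+$. Here I would argue: for $x \in X_+$ the decomposition $x = x - 0$ shows $\|x\|_1 \le \|x\|$, while for \emph{any} decomposition $x = y - z$ with $y,z \in X_+$ one has $y = x + z \in X_+$, so additivity of the original norm on $X_+$ gives $\|y\| = \|x\| + \|z\|$ and hence $\|y\| + \|z\| \ge \|x\|$; taking the infimum yields $\|x\|_1 \ge \|x\|$. This proves~(a), and since the original norm is additive on $X_+$ it follows that $\|\cdot\|_1$ is additive on $X_+$ as well, so $(X,\|\cdot\|_1)$ is an AN--space. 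Part~(b) is then immediate: for $x \in X$ and any decomposition $x = y - z$ with $y,z \in X_+$ we have $\|y\|_1 = \|y\|$ and $\|z\|_1 = \|z\|$ by~(a), so
\[
	\inf\{\|y\|_1 + \|z\|_1 : y,z \in X_+, \ x = y - z\} = \inf\{\|y\| + \|z\| : y,z \in X_+, \ x = y - z\} = \|x\|_1,
\]
which is precisely the base--norm identity~\eqref{eq:base-norm-formula} for $\|\cdot\|_1$.

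I do not expect a genuine obstacle here: once the explicit formula for $\|\cdot\|_1$ is written down, everything reduces to routine Minkowski-functional bookkeeping, the single substantive input being the additivity of the original norm on $X_+$, which is exactly what forces the new and old norms to coincide on the cone and thereby makes the base--norm formula self-consistent.
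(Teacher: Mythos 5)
Your proposal is correct and follows exactly the paper's approach: the paper also defines $\|\cdot\|_1$ as the right-hand side of~\eqref{eq:base-norm-formula} and leaves the verification as ``easy to check,'' which is precisely the bookkeeping you carry out. The key observation -- that additivity of the original norm on $X_+$ forces $\|y\|+\|z\| = \|x\|+2\|z\| \ge \|x\|$ for any decomposition $x=y-z$ of a positive $x$, so the two norms coincide on the cone and the base--norm formula becomes self-reproducing -- is exactly the substantive point, and you have it right.
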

\begin{proof}
	Simply define $\|x\|_1$ to be the right hand side of~\eqref{eq:base-norm-formula} for every $x \in X$. Then it is easy to check that $\|\cdot\|_1$ is an equivalent norm on $X$ which fulfils both properties~(a) and (b).
\end{proof}

For more information about base norm space norm spaces we refer, for instance, to \cite{Nagel1974} and \cite[Section~1.4]{Batty1984}.

Let $X$ be an AN--space. Then there exists a uniquely determined functional $\one \in X'_+$ that fulfils $\langle \one, x \rangle = \|x\|$ for all $x \in X_+$; we call $\one$ the \emph{norm functional} on $X$. Let us collect a few properties of the norm functional in the following proposition.

\begin{proposition} \label{prop:properties-of-norm-functional}
	Let $X$ be an AN--space. Then the norm functional $\one \in X'_+$ has the following properties:
	\begin{enumerate}[label=(\alph*)]
		\item If $X \not= \{0\}$, then $\|\one\| = 1$.
		\item The unit ball of $X'$ is contained in $[-\one,\one]$.
		\item We have $\|x''\| = \langle x'', \one \rangle$ for all $x'' \in X''_+$.
	\end{enumerate}
\end{proposition}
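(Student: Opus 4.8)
The plan is to derive all three assertions directly from the defining identity $\langle \one, x\rangle = \|x\|$ for $x \in X_+$, the additivity of the norm on $X_+$, and the fact that the cone of an AN--space is generating.

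For~(a), one inequality is immediate: if $X \ne \{0\}$ then, since $X_+$ is generating, $X_+$ contains a nonzero vector $x$, and $\langle \one, x/\|x\|\rangle = 1$ shows $\|\one\| \ge 1$. For the converse, take an arbitrary $x \in X$ and write $x = y - z$ with $y, z \in X_+$ (possible because $X_+$ is generating). Then $\langle \one, x\rangle = \|y\| - \|z\|$, and applying the triangle inequality to the identities $y = x + z$ and $z = y - x$ gives $\|y\| - \|z\| \le \|x\|$ and $\|z\| - \|y\| \le \|x\|$; hence $|\langle \one, x\rangle| \le \|x\|$, so $\|\one\| \le 1$.

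For~(b), let $x' \in X'$ with $\|x'\| \le 1$; I need $-\one \le x' \le \one$ in $X'$, that is, $\langle \one - x', x\rangle \ge 0$ and $\langle \one + x', x\rangle \ge 0$ for every $x \in X_+$, which follows at once from $\langle \one, x\rangle = \|x\| \ge |\langle x', x\rangle|$. For~(c), let $x'' \in X''_+$. The bound $\langle x'', \one\rangle \le \|x''\|\,\|\one\| = \|x''\|$ follows from~(a). For the reverse inequality I write $\|x''\| = \sup\{\langle x'', x'\rangle : x' \in X',\ \|x'\| \le 1\}$ (the supremum over the balanced unit ball of the values $\langle x'', x'\rangle$ coincides with that of their absolute values) and observe that, by~(b), every such $x'$ satisfies $\one - x' \in X'_+$; since $x'' \in X''_+$ this gives $\langle x'', x'\rangle \le \langle x'', \one\rangle$, and taking the supremum yields $\|x''\| \le \langle x'', \one\rangle$.

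The argument is essentially routine; the only two points worth a moment's attention are that the triangle-inequality trick in~(a) works for an arbitrary decomposition $x = y - z$ into positive vectors, and that in~(c) the norm of $x''$ must be expressed as a supremum of the numbers $\langle x'', x'\rangle$ themselves (rather than of their moduli) so that~(b) applies directly.
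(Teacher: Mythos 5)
Your proof is correct and follows essentially the same route as the paper's: the reverse triangle inequality applied to a decomposition $x = y - z$ with $y,z \in X_+$ for~(a), the direct computation $\langle \one - x', x\rangle = \|x\| - \langle x', x\rangle \ge 0$ for~(b), and the order bound $-\one \le x' \le \one$ combined with positivity of $x''$ for~(c). The only differences are cosmetic (you spell out the reverse triangle inequality and phrase~(c) as a supremum of the values $\langle x'', x'\rangle$ rather than as a two-sided estimate), so there is nothing to add.
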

\begin{proof}
	(a) The inequality $\|\one\| \ge 1$ is obvious as $X \not= \{0\}$ and hence $X_+ \not= \{0\}$. On the other hand, we can decompose every $x \in X$ as $x = y-z$ for some $y,z \in X_+$ and it thus follows from the inverse triangle inequality that
	\begin{align*}
		\|x\| \ge |\|y\|-\|z\|| = |\langle \one, y \rangle - \langle \one, z\rangle| = |\langle \one, x\rangle|.
	\end{align*}
	(b) Whenever $x' \in X'$ is a functional of norm $\|x'\| \le 1$ we have $\langle \one - x', x \rangle = \|x\| - \langle x',x\rangle \ge 0$ for all $x \in X_+$, so $\one \ge x'$. By applying this to $-x'$, we also obtain $-\one \le x'$.
	
	(c) The assertion is obvious if $X = \{0\}$, so assume $X \not= \{0\}$. Let $x'' \in X''_+$. Since $\|\one\| = 1$, it follows that $\|x''\| \ge \langle x'',\one \rangle$. If, on the other hand, $x' \in X'$ has norm at most $1$, then it follows from~(b) that $-\one \le x' \le \one$. Thus,
	\begin{align*}
		-\langle x'', \one \rangle \le \langle x'', x' \rangle \le \langle x'', \one \rangle,
	\end{align*}
	so $\langle x'', \one \rangle \ge |\langle x'', x'\rangle|$.
\end{proof}

The rather simple structure theory of AL-Banach lattices should not tempt us into believing that AN--spaces are similarly well-behaved. In Appendix~\ref{appendix:example-additive-norm} we present a variety of example which demonstrate that  AN--spaces may have rather disparate regularity properties. This topic is also related to regularity properties of so-called \emph{KB-spaces} which we discuss in the following remark.

\begin{rem} \label{rem:kb-for-non-lattices}
	Recall that a Banach lattice $X$ is called a \emph{KB-space} if one of the following equivalent assertions is fulfilled \cite[Proposition II.5.15]{Schaefer74}.
	\begin{enumerate}
		\item Every increasing norm bounded sequence in $X$ is norm convergent.
		\item The space $X$ is a projection band in its bidual.
	\end{enumerate}
	Note that every KB-space $X$ has order continuous norm and hence, every order interval in such a space $X$ is weakly compact. We also point out that it is easy to see that every AL-Banach lattice is a KB-space.

	Let us now discuss what happens to the equivalence ``(i) $\Leftrightarrow$ (ii)'' if $X$ is only an ordered Banach space with generating and normal cone (note that assertion~(ii) only makes sense if $X_+$ is generating and normal).
	
	The implication ``(ii) $\Rightarrow$ (i)'' remains true. Indeed, let $(x_j)$ be an increasing norm-bounded sequence (or net) in $X$. Then $(x_j)$ converges to an element $x'' \in X''$ with respect to the weak${}^*$-topology. If (ii) holds and $P \in \L(X'')$ denotes the band projection onto $X$, one immediately checks that $Px'' = x''$; hence, $x := x''$ is in fact an element of $X$ and $(x_j)$ converges weakly to $x$. This implies (i) since every increasing and weakly convergent net is automatically norm convergent \cite[Theorem~V.4.3]{Schaefer66}.
	
	By contrast, the implication ``(i) $\Rightarrow$ (ii)'' fails in general. Indeed, there exist AN--spaces for which not every order interval is weakly compact (see for instance Remark~\ref{rem:an-spaces-and-banach-lattices-appendix} in the appendix). Such a space $X$ cannot be a projection band in its bi-dual according to Proposition~\ref{prop:band-in-bidual-implies-weakly-compact-intervals}; yet, it is easy to see that every increasing norm bounded sequence in an AN-space is norm convergent.
\end{rem}

\subsection{Individual and universal lower bounds}

In the remaining part of the article we derive convergence theorems for positive operator semigroups from the existence of so-called \emph{lower bounds} which are defined as follows.

\begin{definition} \label{def:individual-lower-bounds}
	Let $X$ be an ordered Banach space and let $\T = (T_t)_{t \in J}$ be a positive one-parameter semigroup on $X$ (where $J = \N_0$ or $J = [0,\infty)$).
	\begin{enumerate}
		\item[(i)] Let $f \in X_+$ be of norm $1$. A vector $h > 0$ is called an \emph{(individual) lower bound} for the pair $(\T,f)$ if $h$ is asymptotically dominated by $(T_tf)_{t \in J}$.
		\item[(ii)] A vector $h > 0 $ is called a \emph{universal lower bound for $\T$} if it is a lower bound for the pair $(\T,f)$ for each $f \in X_+$ of norm $1$.
	\end{enumerate}
	For the rest of this definition we assume that $\T$ is a $C_0$-semigroup in case that $J = [0,\infty)$.
	\begin{enumerate}
		\item[(iii)] Let $f \in X_+$ be of norm $1$. A vector $h > 0$ is called an \emph{(individual) mean lower bound} for the pair $(\T,f)$ if $h$ is asymptotically dominated by $(A_t(\T)f)_{t \in J}$.
		\item[(iv)] A vector $h > 0$ is called a \emph{universal mean lower bound for $\T$} if it is a mean lower bound for the pair $(\T,f)$ for each $f \in X_+$ of norm $1$.
	\end{enumerate}
\end{definition}

If the semigroup $\T$ is clear from the context we say $h_f$ is a \emph{lower bound for $f$}, and so on.

Universal lower bounds on $L^1$-spaces were introduced by Lasota and Yorke in \cite{Lasota1982} in order to study the Perron--Frobenius operator associated with certain dynamical systems. They derived strong convergence of the semigroup from the existence of such a lower bound. Individual lower bounds were first considered by Ding \cite{Ding2003} for semigroups of Perron--Frobenius operators. Ding's results can for instance be employed in the study of certain mixing properties of dynamical systems, see \cite[Section~2]{GerlachPFConv}. Individual lower bounds for more general positive semigroups on AL-Banach lattices were in the focus of a recent paper \cite{GerlachLB} by Gerlach and the first of the present authors. On AL-Banach lattices mean lower bounds were for instance considered in \cite{Emelyanov2004} and on preduals of von Neumann algebras they were studied in \cite{Emelyanov2006}. In the rest of this article we will generalise several results about lower bounds of semigroups from AL-Banach lattices and preduals of von Neumann algebras to general AN--spaces.

The following remark, which shows that the existence of lower bounds is consistent with renorming arguments, will be used tacitly on several occasions in the sequel:

\begin{rem} \label{rem:renorming-allowed}
	(a) Let $X$ be an
	ordered Banach space and let $\T = (T_t)_{t \in J}$ be a positive one-parameter semigroup on $X$ (where $J = \N_0$ or $J = [0,\infty)$). Assume that, for every $f \in X_+$ of norm $1$, there exists a (mean) lower bound $h_f \in X_+$ of $(\T,f)$ such that $\inf \{\|h_f\|: \; f \in X_+, \; \|f\| = 1\} > 0$
	
	This assumptions remains true whenever we endow $X$ with an equivalent norm. The proof of this observation is elementary, so we omit it.
	
	(b) A similar observation is also true for universal (mean) lower bounds.
\end{rem}

We now recall the notion of a \emph{Markov operator} which is essential for our approach.

\begin{definition}
	Let $X$ be an AN--space. We call a linear operator $T \in \L(X)$ a \emph{Markov operator} if $T$ is positive and if $\|Tf\| = \|f\|$ for every $f \in X_+$. Equivalently, $T$ is positive and the norm functional $\one$ is a fixed point of the dual operator $T'$. A one-parameter semigroup $\T = (T_t)_{t \in J}$ (where $J = \N_0$ or $J = [0,\infty)$) is called a \emph{Markov semigroup} if every operator $T_t$ is a Markov operator.
\end{definition}

Note that a Markov semigroup on an AN--space $X$ is automatically bounded; this is a consequence of the decomposition property in~\eqref{eq:bounded-decomposition}. If $X$ is even a base norm space, then every Markov semigroup is actually contractive.

Our first goal is to show in Proposition~\ref{prop:renorming-to-markov-semigroup-very-general} that the existence of (mean) lower bounds for a semigroup $\T$ makes it possible to renorm the underlying space $X$ such that $X$ becomes a base norm space and such that $\T$ becomes a Markov semigroup. We start with the following simple lemma:

\begin{lemma} \label{lem:renorming}
	Let $X$ be an ordered Banach space with generating cone. Let $\psi$ be a positive linear functional on $X$ and define $\|f\|_\psi = \inf\{\langle\psi,g+h\rangle: g,h \in X_+, \, f = g-h\}$ for every $f \in X$.
	
	Then $\|\cdot\|_\psi$ is a seminorm which is additive on the positive cone which satisfies $\|f\|_\psi \le 2C \, \|\psi\| \, \|f\|$, where $C$ is the constant from the decomposition property in~\eqref{eq:bounded-decomposition}.
	
	If, moreover, $\inf\big\{\langle\psi,f\rangle: \, f\in X_+, \; \|f \| = 1 \big\} >0$, then the seminorm $\|\cdot\|_\psi$ is actually a norm which is equivalent to the original norm on $X$ and $(X, \|\cdot\|_\psi)$ is base--norm space.
\end{lemma}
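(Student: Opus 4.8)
The plan is to verify in turn each assertion of the lemma: that $\|\cdot\|_\psi$ is a seminorm, that it is additive on $X_+$, that it is bounded above by $2C\|\psi\|\,\|\cdot\|$, and finally (under the extra hypothesis) that it is an equivalent norm making $X$ a base norm space.

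First I would check the seminorm properties. Positive homogeneity for nonnegative scalars is immediate from the defining infimum, and homogeneity for $-1$ follows because the decompositions $f = g - h$ are in bijection with the decompositions $-f = h - g$. Subadditivity follows by combining decompositions: given $f_1 = g_1 - h_1$ and $f_2 = g_2 - h_2$ with $g_i,h_i \in X_+$, we have $f_1 + f_2 = (g_1+g_2) - (h_1+h_2)$ with $g_1+g_2, h_1+h_2 \in X_+$, so $\|f_1+f_2\|_\psi \le \langle \psi, g_1+g_2+h_1+h_2\rangle$; taking infima over the two decompositions gives $\|f_1+f_2\|_\psi \le \|f_1\|_\psi + \|f_2\|_\psi$. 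The bound $\|f\|_\psi \le 2C\|\psi\|\,\|f\|$ comes directly from the decomposition property~\eqref{eq:bounded-decomposition}: choose $y,z \in X_+$ with $f = y - z$ and $\|y\|,\|z\| \le C\|f\|$, so that $\langle \psi, y+z\rangle \le \|\psi\|(\|y\|+\|z\|) \le 2C\|\psi\|\,\|f\|$. Since $\|0\|_\psi = 0$ (using $0 = 0 - 0$), this is indeed a seminorm; note that at this stage it need not be definite.

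Next I would verify additivity on the positive cone. For $f,g \in X_+$ the inequality $\|f+g\|_\psi \le \|f\|_\psi + \|g\|_\psi$ is just subadditivity, and using the trivial decompositions $f = f - 0$ and $g = g - 0$ we get $\|f\|_\psi \le \langle\psi,f\rangle$ and $\|g\|_\psi \le \langle\psi,g\rangle$, but in fact I claim equality $\|f\|_\psi = \langle\psi,f\rangle$ for $f \in X_+$. For this, take any decomposition $f = g - h$ with $g,h \in X_+$; then $g = f + h$, so $\langle\psi,g+h\rangle = \langle\psi,f\rangle + 2\langle\psi,h\rangle \ge \langle\psi,f\rangle$ since $\psi$ is positive. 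Thus $\|f\|_\psi = \langle\psi,f\rangle$ for all $f \in X_+$, and additivity of $\|\cdot\|_\psi$ on $X_+$ follows from linearity of $\psi$. The main obstacle — really the only non-routine point — is the final definiteness and equivalence statement under the hypothesis $m := \inf\{\langle\psi,f\rangle : f \in X_+, \|f\|=1\} > 0$.

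For the last part I would argue as follows. First, $m > 0$ gives $\langle\psi,f\rangle \ge m\|f\|$ for all $f \in X_+$ by homogeneity. Given an arbitrary $f \in X$ and any decomposition $f = g - h$ with $g,h \in X_+$, the decomposition property~\eqref{eq:bounded-decomposition} applied to $f$ does not directly help, so instead I use that $\|f\| \le \|g\| + \|h\| \le \frac{1}{m}(\langle\psi,g\rangle + \langle\psi,h\rangle) = \frac{1}{m}\langle\psi,g+h\rangle$; taking the infimum over all such decompositions yields $\|f\| \le \frac{1}{m}\|f\|_\psi$. Combined with the upper bound $\|f\|_\psi \le 2C\|\psi\|\,\|f\|$ already established, this shows $\|\cdot\|_\psi$ is an equivalent norm (in particular definite). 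Finally, $(X,\|\cdot\|_\psi)$ is a base norm space: the required identity $\|f\|_\psi = \inf\{\|g\|_\psi + \|h\|_\psi : g,h \in X_+,\ f = g-h\}$ holds because for $g,h \in X_+$ we showed $\|g\|_\psi = \langle\psi,g\rangle$ and $\|h\|_\psi = \langle\psi,h\rangle$, so the right-hand side is exactly $\inf\{\langle\psi,g+h\rangle : g,h \in X_+,\ f = g-h\} = \|f\|_\psi$ by definition. This completes the proof.
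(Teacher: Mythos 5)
Your proof is correct and complete; the paper itself omits the argument as ``straightforward,'' and your verification (the key identity $\|f\|_\psi = \langle\psi,f\rangle$ for $f\in X_+$, the upper bound via the decomposition property~\eqref{eq:bounded-decomposition}, and the lower bound $\|f\|\le \frac{1}{m}\|f\|_\psi$ from the infimum hypothesis) is exactly the intended routine check. No gaps.
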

\begin{proof}
	The proof is straightforward, so we omit the details.
\end{proof}

We will need a few facts about Banach limits which we recall next. Let either $J = \N_0$ or $J = [0,\infty)$; a Banach limit $\ell^\infty(J)$ is a positive linear functional $b \in \big(\ell^\infty(J)\big)'$ satisfying
\begin{enumerate}
	\item $\langle b,\one_J\rangle = 1$,
	\item $\langle b, S_u f \rangle = \langle b, f\rangle$ for all $f \in E$ and all $u \in J$, where $S_u$ denotes the left shift operator on $E$ given by $(S_u f)(t) = f(u+t)$ for all $f \in E$ and all $t \in J$.
\end{enumerate}
Note that there exists a Banach lattice on $\ell^\infty(J)$ since $J$ is a commutative semigroup and thus
amenable (see e.g.\ \cite[p.\ 178]{Lyubich1992}). As a consequence of the definition of a Banach limit, we have $\langle b, f\rangle = \lim_{t \to \infty}f(t)$ for all $f \in \ell^\infty(J)$ for which this limit exists.

\begin{proposition} \label{prop:renorming-to-markov-semigroup-very-general}
	Let $X$ be an ordered Banach space with generating cone and let $\T = (T_t)_{t \in J}$ be a bounded semigroup of positive operators on $X$, where either $J = \N_0$ or $J = [0,\infty)$. Assume that at least one of the following two conditions is satisfied:
	\begin{enumerate}
		\item[(a)] For every $f \ge 0$ of norm $1$, there exists a lower bound $h_f$ of $(\T,f)$ and we can find a positive functional $\varphi \in X'$ such that $\inf\{\langle \phi,h_f\rangle: \, f \in X_+, \; \|f\| = 1\} =: \beta >0$.
		\item[(b)] Either $J = \N_0$ or $J = [0,\infty)$ and $\T$ is strongly continuous. Moreover, for every $f \ge 0$ of norm $1$, there exists a mean lower bound $h_f$ of $(\T,f)$ and we can find a positive functional $\varphi \in X'$ such that $\inf\{\langle \phi,h_f\rangle: \, f \in X_+, \; \|f\| = 1\} =: \gamma >0$.
	\end{enumerate}
	Then there exists an equivalent additive norm $\|\cdot\|_1$ on $X$ such that $(X, \|\cdot\|_1)$ is a base norm space and such that all operators $T_t$ are Markov operators.
\end{proposition}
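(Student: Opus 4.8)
The plan is to distil from the hypotheses a single positive functional $\psi \in X'_+$ which is \emph{invariant}, i.e.\ $T_t'\psi = \psi$ for all $t \in J$, and which satisfies $\inf\{\langle\psi,f\rangle : f \in X_+,\ \|f\| = 1\} > 0$; once such a $\psi$ is at hand we simply take $\|\cdot\|_1 := \|\cdot\|_\psi$ as in Lemma~\ref{lem:renorming}. That lemma already gives that $\|\cdot\|_\psi$ is an equivalent norm, additive on $X_+$, and that $(X,\|\cdot\|_\psi)$ is a base norm space. Moreover, for $f \in X_+$ the infimum defining $\|f\|_\psi$ is attained at the decomposition $f = f - 0$ (any decomposition $f = g-h$ with $g,h \in X_+$ has $\langle\psi,g+h\rangle = \langle\psi,f\rangle + 2\langle\psi,h\rangle \ge \langle\psi,f\rangle$), so $\|f\|_\psi = \langle\psi,f\rangle$ for all $f \in X_+$; hence $\|T_t f\|_\psi = \langle\psi, T_t f\rangle = \langle\psi, f\rangle = \|f\|_\psi$, which together with the positivity of $T_t$ says that each $T_t$ is a Markov operator on $(X,\|\cdot\|_\psi)$.

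To construct $\psi$, fix a Banach limit $b$ on $\ell^\infty(J)$ and put $\langle\psi,x\rangle := \langle b,\,(t\mapsto \langle\varphi,T_t x\rangle)\rangle$ in case~(a), and $\langle\psi,x\rangle := \langle b,\,(t\mapsto\langle\varphi, A_t(\T)x\rangle)\rangle$ in case~(b), where $\varphi$ is the functional supplied by the hypothesis. Since $\T$, and hence also its Ces{\`a}ro means, is bounded, say by $M$, and since $\varphi$ is continuous (the cone being generating), the functions in brackets lie in $\ell^\infty(J)$, so $\psi \in X'$ is well defined; it is positive because $b$ and $\varphi$ are positive and $T_tX_+ \subseteq X_+$. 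For the invariance, note that in case~(a) the bracketed function for $T_u x$ is the left shift by $u$ of the one for $x$, so property~(2) in the definition of a Banach limit yields $\langle\psi, T_u x\rangle = \langle\psi, x\rangle$. In case~(b) the Ces{\`a}ro means do not form a semigroup, but a direct computation gives $\|A_t(\T)T_u - A_t(\T)\| \le 2uM/t$ (with the analogous estimate for the sums when $J = \N_0$), so the bracketed functions for $T_u x$ and for $x$ differ by a function tending to $0$ as $t \to \infty$; since $b$ annihilates such functions, again $\langle\psi, T_u x\rangle = \langle\psi, x\rangle$.

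Finally one checks $\inf\{\langle\psi,f\rangle : f \in X_+,\ \|f\| = 1\} > 0$, which is the step that actually uses the lower bounds. Fix $f \in X_+$ with $\|f\| = 1$. Since $h_f$ is asymptotically dominated by $(T_t f)_{t\in J}$ (resp.\ by $(A_t(\T)f)_{t\in J}$), Proposition~\ref{prop:asymptotic-domination-via-error-function} provides $r: J \to X_+$ with $r(t) \to 0$ and $h_f \le T_t f + r(t)$ (resp.\ $h_f \le A_t(\T)f + r(t)$) for all $t \in J$; applying $\varphi$ gives $\langle\varphi,h_f\rangle \le \langle\varphi,T_t f\rangle + w(t)$ (resp.\ with $A_t(\T)f$), where $w(t) := \langle\varphi,r(t)\rangle$ defines a nonnegative element of $\ell^\infty(J)$ with $w(t)\to 0$. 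Hence the bracketed function defining $\langle\psi,f\rangle$ is pointwise $\ge \beta\one_J - w$ (resp.\ $\ge \gamma\one_J - w$), and positivity of $b$ together with $\langle b,\one_J\rangle = 1$ and $\langle b, w\rangle = 0$ gives $\langle\psi,f\rangle \ge \beta$ (resp.\ $\ge \gamma$). So the infimum is at least $\beta > 0$ (resp.\ $\gamma > 0$), and Lemma~\ref{lem:renorming} finishes the proof. The only point requiring genuine care is the invariance of $\psi$ in case~(b): one has to observe that averaging the bounded semigroup makes the tails of $t \mapsto A_t(\T)T_u$ and $t \mapsto A_t(\T)$ agree, which is exactly what the estimate $\|A_t(\T)T_u - A_t(\T)\| \le 2uM/t$ records; everything else is routine bookkeeping with Banach limits and the error-function form of asymptotic domination.
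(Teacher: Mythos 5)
Your proposal is correct and follows essentially the same route as the paper: build an invariant positive functional $\psi$ via a Banach limit applied to $t \mapsto \langle\varphi, T_t x\rangle$ (resp.\ $t \mapsto \langle\varphi, A_t(\T)x\rangle$), verify $\langle\psi,f\rangle \ge \beta$ (resp.\ $\gamma$) on normalised positive vectors from the lower-bound hypothesis, and invoke Lemma~\ref{lem:renorming}. The only differences are cosmetic: you treat case~(a) uniformly with the orbit functional rather than reducing the strongly continuous case to~(b), and you spell out the invariance of $\psi$ in case~(b) via the tail estimate $\|A_t(\T)T_u - A_t(\T)\| \le 2uM/t$, which the paper leaves as ``easy to see''.
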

\begin{proof}
	(b) Fix a Banach limit $b \in \ell^\infty(J)'$. We define a positive functional $\psi \in X'$ by means of the formula
	\begin{align*}
		\langle\psi,g\rangle = \Big\langle b, \big(\langle\varphi,A_t(\T)g\rangle\big)_{t \in J}  \Big\rangle \qquad \forall g \in X.
	\end{align*}
	Then it is easy to see that $\psi$ is invariant under $\T$, i.e.\ we have $T_t'\psi = \psi$ for all $t \in J$. Moreover, it follows fro the assumption on $\psi$ and from the definition of a mean lower bound that $\langle \psi,f \rangle \ge \gamma$ for each $f \in X_+$ of norm $1$. Hence, the seminorm $\|\cdot\|_\psi$ from Lemma~\ref{lem:renorming} is, according to the lemma, a an equivalent norm on $X$ and $(X,\|\cdot\|_\psi)$ is a base norm space. It follows from the $\T$-invariance of $\psi$ that the semigroup $\T$ is Markov with respect to $\|\cdot\|_\psi$.
	
	(a) If $J = \N_0$ or if $J = [0,\infty)$ and $\T$ is strongly continuous, every lower bound of $(\T,f)$ is also a mean lower bound of $(\T,f)$; thus, the assertion follows from~(b) in those cases. If $J = [0,\infty)$ and $\T$ is not strongly continuous, we can argue similarly as in the proof of~(b); we only have to replace the vector $\big(\langle\varphi,A_t(\T)g\rangle\big)_{t \in J}$ in the definition of $\psi$ with $\big(\langle\varphi,T_t g\rangle\big)_{t \in J}$.
\end{proof}

\begin{corollary}  \label{cor:ind-lower-bouds-renorming}
	Let $X$ be an AN-space and let $\T = (T_t)_{t \in J}$ be a bounded positive one-parameter semigroup on $X$ (where $J = \N_0$ or $J = [0,\infty)$). Assume that at least one of the following two conditions is satisfied:
	\begin{enumerate}
		\item[(a)] For every $f \ge 0$ of norm $1$, there exists a lower bound $h_f$ of $(\T,f)$ such that $\inf\{\|h_f\|: \, f \in X_+, \; \|f\| = 1\} =: \beta >0$.
		\item[(b)] Either $J = \N_0$ or $J = [0,\infty)$ and $\T$ is strongly continuous. Moreover, for every $f \ge 0$ of norm $1$, there exists a mean lower bound $h_f$ of $(\T,f)$ such that $\inf\{\|h_f\|: \, f \in X_+, \; \|f\| = 1\} =: \gamma >0$.
	\end{enumerate}
	Then there exists an equivalent additive norm $\|\cdot\|_1$ on $X$ such that $(X, \|\cdot\|_1)$ is a base norm space and such that all operators $T_t$ are Markov operators.
\end{corollary}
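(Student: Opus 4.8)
The plan is to obtain the corollary as an immediate special case of Proposition~\ref{prop:renorming-to-markov-semigroup-very-general}; the only thing to supply is a concrete choice of the abstract positive functional $\varphi$ that occurs in the hypotheses of that proposition. Since $X$ is an AN--space, it carries the norm functional $\one \in X'_+$, which by definition satisfies $\langle \one, x \rangle = \|x\|$ for every $x \in X_+$. I would simply take $\varphi := \one$.

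In case~(a), each lower bound $h_f$ is by definition a vector with $h_f > 0$, hence $h_f \in X_+$, so $\langle \varphi, h_f \rangle = \langle \one, h_f \rangle = \|h_f\|$ for every $f \in X_+$ of norm $1$. Therefore
\[
	\inf\{\langle \varphi, h_f\rangle : f \in X_+,\ \|f\| = 1\} = \inf\{\|h_f\| : f \in X_+,\ \|f\| = 1\} = \beta > 0,
\]
which is exactly condition~(a) of Proposition~\ref{prop:renorming-to-markov-semigroup-very-general}. That proposition then provides the equivalent additive norm $\|\cdot\|_1$ turning $X$ into a base norm space on which every $T_t$ is a Markov operator. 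Case~(b) is handled in precisely the same way: again put $\varphi := \one$, observe that $\langle \varphi, h_f\rangle = \|h_f\|$ for the mean lower bounds $h_f \in X_+$, so that $\inf\{\langle\varphi, h_f\rangle\} = \gamma > 0$, and invoke condition~(b) of the proposition (using the strong continuity assumption where needed).

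I do not expect any genuine obstacle: the entire content of the corollary is the remark that, on an AN--space, the norm functional $\one$ can play the role of the general positive functional appearing in Proposition~\ref{prop:renorming-to-markov-semigroup-very-general}, so that a \emph{uniform norm lower bound} on the vectors $h_f$ automatically yields the abstract hypothesis of that proposition. The only point one must not overlook is that a (mean) lower bound is a strictly positive vector, which is what makes the identity $\langle \one, h_f\rangle = \|h_f\|$ applicable.
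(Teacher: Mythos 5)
Your proposal is correct and coincides with the paper's own proof, which likewise applies Proposition~\ref{prop:renorming-to-markov-semigroup-very-general} with $\varphi = \one$, the norm functional on the AN--space. The observation that $\langle \one, h_f\rangle = \|h_f\|$ for the positive vectors $h_f$ is exactly the reduction intended there.
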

\begin{proof}
	Apply the preceding Proposition~\ref{prop:renorming-to-markov-semigroup-very-general} to $\varphi = \one$ where $\one$ denotes the norm functional on $X$.
\end{proof}

\begin{corollary} \label{cor:unif-mean-lower-bounds-renorming}
	Let $X$ be an ordered Banach space with generating cone and let $\T = (T_t)_{t \in J}$ be a bounded positive one-parameter semigroup on $X$ (where $J = \N_0$ or $J = [0,\infty)$). Assume that at least one of the following two conditions is satisfied:
	\begin{enumerate}
		\item[(a)] There exists a universal lower bound $h > 0$ of $\T$.
		\item[(b)] Either $J = \N_0$ or $J = [0,\infty)$ and $\T$ is strongly continuous. Moreover, there exists a universal mean lower bound $h > 0$ of $\T$.
	\end{enumerate}
	Then there exists an equivalent additive norm $\|\cdot\|_1$ on $X$ such that $(X, \|\cdot\|_1)$ is a base norm space and such that all operators $T_t$ are Markov operators.
\end{corollary}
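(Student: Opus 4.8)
The plan is to reduce the statement immediately to Proposition~\ref{prop:renorming-to-markov-semigroup-very-general}. The point is that a universal (mean) lower bound is a \emph{single} vector $h$ rather than a family $(h_f)_f$, so — in contrast to Corollary~\ref{cor:ind-lower-bouds-renorming} — we do not need the norm functional, and indeed we do not even require the original norm to be additive on $X_+$; it suffices to exhibit one positive functional on $X$ that does not vanish at $h$.

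Concretely, I would first apply Corollary~\ref{cor:positive-non-annihilating-functional}(b): since $h > 0$, there is a positive functional $\varphi \in X'$ with $\langle \varphi, h \rangle > 0$. Assume case~(a) holds. By Definition~\ref{def:individual-lower-bounds}, the universal lower bound $h$ is in particular an individual lower bound for each pair $(\T, f)$ with $f \in X_+$ of norm $1$, so we may set $h_f := h$ for all such $f$. Then
\begin{align*}
	\inf\{\langle \varphi, h_f \rangle : \; f \in X_+,\ \|f\| = 1\} = \langle \varphi, h \rangle > 0,
\end{align*}
which is exactly hypothesis~(a) of Proposition~\ref{prop:renorming-to-markov-semigroup-very-general}; hence that proposition furnishes the desired equivalent additive norm $\|\cdot\|_1$ turning $X$ into a base norm space for which all $T_t$ are Markov operators. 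In case~(b) one repeats the argument verbatim with ``lower bound'' replaced by ``mean lower bound'', so that hypothesis~(b) of the proposition holds with the same $\varphi$ (and constant $\gamma = \langle \varphi, h \rangle$); the extra assumptions on $J$ and on strong continuity of $\T$ in case~(b) are precisely those required by Proposition~\ref{prop:renorming-to-markov-semigroup-very-general}(b).

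I do not expect a genuine obstacle here: the only things to verify are that choosing the constant family $h_f := h$ is permitted — which is immediate from the definition of a universal (mean) lower bound — and that the infimum of a constant family equals that constant. All of the substantive work has already been carried out in Proposition~\ref{prop:renorming-to-markov-semigroup-very-general}.
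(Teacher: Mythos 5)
Your proposal is correct and coincides with the paper's own proof: the authors likewise invoke Corollary~\ref{cor:positive-non-annihilating-functional}(b) to obtain a positive functional $\varphi$ with $\langle\varphi,h\rangle>0$ and then apply Proposition~\ref{prop:renorming-to-markov-semigroup-very-general} with the constant family $h_f:=h$. Your write-up merely spells out the (trivial) verification of the infimum condition, which the paper leaves implicit.
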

\begin{proof}
	According to Corollary~\ref{cor:positive-non-annihilating-functional}(b) there exists a positive linear functional $\varphi \in X'$ such that $\varphi(h) > 0$. Thus, we can apply Proposition~\ref{prop:renorming-to-markov-semigroup-very-general} to this $\phi$.
\end{proof}

In case that $X$ is a Banach lattice, the above results were implicitly contained in the proofs of Corollaries~3.5 and~4.5 of \cite{GerlachLB}.
\medskip

\subsection{Individual lower bounds and strong convergence} \label{subsection:stability and lower bounds}

Let us now proceed towards our goal to prove convergence results for positive operator semigroups in case that they admit certain lower bounds. Our results, in particular Theorem~\ref{thm:ind-lower-bounds-convergence} and Corollary~\ref{cor:to-ind-lower-bounds-convergence}, substantially generalise results for AL-Banach lattices that were recently obtain by M.~Gerlach and the first author \cite{GerlachPFConv} as well as a result of Ayupov, Sarymsakov and Grabarnik which was proved in \cite{Ajupov1987} for universal lower bounds on the predual of a von Neumann algebra (see also \cite[Theorem~3.3.6, Theorem~3.3.17]{Emelyanov2007}).

\medskip

In our first result of this type, Theorem~\ref{thm:ind-lower-bounds-which-are-fixed-points-imply-convergence}, we consider the special case where the lower bounds are, at the same time, fixed points of the semigroup.

\begin{theorem} \label{thm:ind-lower-bounds-which-are-fixed-points-imply-convergence}
	Let $X$ be an AN-space and let $\T = (T_t)_{t \in J}$ be a bounded positive one-parameter semigroup on $X$ (where $J = \N_0$ or $J = [0,\infty)$). The following assertions are equivalent:
	\begin{enumerate}
		\item[(i)] The semigroup $\T$ is strongly convergent and the limit operator $P$ fulfils \\$\|Pf\| \ge \gamma \|f\|$ for a number $\gamma > 0$ and all $f \in X_+$.
		\item[(ii)] For every $f \in X_+$ of norm $1$ we can find a lower bound $h_f \in X_+$ of $(\T,f)$ which is a fixed point of $\T$ and for which we have $\inf\{\|h_f\|: \; f \in X_+, \; \|f\| = 1\} > 0$.
	\end{enumerate}
\end{theorem}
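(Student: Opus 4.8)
The plan is to prove the two implications separately. The implication (i) $\Rightarrow$ (ii) should be the easy direction: if $\T$ is strongly convergent with limit projection $P = P_\T$, then for every $f \in X_+$ of norm $1$ I would simply set $h_f := Pf$. Since $P$ commutes with all $T_t$ (Remark~\ref{stabilityprojection}), $Pf$ is a fixed point of $\T$; since $T_t f \to Pf$ in norm we have in particular $T_t f \equ Pf$ by Proposition~\ref{prop:asd-elementary-properties}(c), so $h_f$ is a lower bound for $(\T, f)$; and $\|h_f\| = \|Pf\| \ge \gamma \|f\| = \gamma > 0$ uniformly. One small point to check is that $Pf > 0$ (i.e.\ $Pf \neq 0$), which follows from $\|Pf\| \ge \gamma > 0$; positivity of $Pf$ is clear since $P \ge 0$ (it is a strong limit of the positive operators $T_t$).

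The substantive direction is (ii) $\Rightarrow$ (i). The first move is to invoke Corollary~\ref{cor:ind-lower-bouds-renorming}(a): the hypothesis that $\inf\{\|h_f\| : \|f\|=1\} > 0$ lets me renorm $X$ so that it becomes a base norm space and every $T_t$ becomes a Markov operator; by Remark~\ref{rem:renorming-allowed} the existence of the lower bounds (with uniformly positive norm) is preserved. So without loss of generality $\T$ is a Markov semigroup on a base norm space, hence contractive. Now fix $f \in X_+$ of norm $1$ and let $h_f \in \fix(\T)$ be its lower bound. The key idea is that, because $h_f$ is a \emph{fixed} point, the "error" in the asymptotic domination can only shrink the available mass: using that $T_t f \asd h_f$, Proposition~\ref{prop:asymptotic-domination-via-error-function} gives $r_t \to 0$ in $X_+$ with $h_f \le T_t f + r_t$. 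Applying the norm functional $\one$ (which is $\T'$-invariant, $\langle \one, T_t f\rangle = \|f\| = 1$) and using additivity of the norm on $X_+$, one gets $\|h_f\| \le 1 + \|r_t\| \to 1$, hence $\|h_f\| \le 1$, and more importantly this confirms the mass budget is tight. I would then consider the positive vector $f - \varepsilon h_f$ for $\varepsilon = \|h_f\|$... actually the cleaner route: decompose $T_t f = (T_t f \wedge$-type argument is unavailable without a lattice, so instead track $\distPos(T_t f - h_f) \to 0$ together with $\|T_t f\| = 1 = \|h_f\|/\|h_f\|$ and argue that $u_t := T_t f - h_f$ becomes asymptotically positive with $\|u_t\| \to 1 - \|h_f\|$.

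The heart of the convergence proof is then a standard "subtract off the fixed part and iterate" argument, in the spirit of the Lasota--Yorke / Gerlach lower bound theorems. Writing $\delta := \|h_f\| > 0$ (uniformly bounded below over all $f$), the point is that each orbit $T_t f$ eventually deposits at least mass $\delta - \epsilon$ onto the fixed vector $h_f$, so that $T_t f - h_f$ (suitably corrected to stay positive) has norm at most $1 - \delta + \epsilon$; renormalising and repeating with the new unit vector, after $n$ steps one is left with an orbit of norm at most $(1-\delta)^n + (\text{small errors}) \to 0$. The main obstacle, and the step that needs the most care, is making this iteration rigorous in a non-lattice, merely-base-norm setting: one cannot literally form $T_t f - h_f$ and call it positive, only asymptotically positive, so the argument must be organised around $\distPos$ and the norm functional $\one$, choosing a rapidly increasing sequence of times $t_k$ (as in Lemma~\ref{lem:key-lemma}) so the accumulated errors $\sum \|r_{t_k}\|$ are summable. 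Once strong convergence of each orbit from $X_+$ is established, it extends to all of $X$ since $X_+$ is generating, the limit operator $P$ is a projection commuting with $\T$ by Remark~\ref{stabilityprojection}, and the bound $\|Pf\| \ge \delta \|f\|$ for $f \in X_+$ falls out of $\|Pf\| = \lim \langle \one, T_t f\rangle = \|f\|$ together with $Pf \ge h_{f/\|f\|}\cdot\|f\|$ — giving in fact $\gamma = 1$ after renorming, and an equivalent positive constant in the original norm.
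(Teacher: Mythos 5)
Your strategy coincides with the paper's in all its main moves: the implication (i)~$\Rightarrow$~(ii) via $h_f := Pf$ is exactly right, the reduction via Corollary~\ref{cor:ind-lower-bouds-renorming} to a Markov semigroup on a base norm space is the paper's first step, and the core is a geometric iteration with ratio $1-\beta$ driven by the uniform lower bound. The difference is one of bookkeeping, and it is precisely where your sketch stops that the work lies. You propose to track a shrinking \emph{remainder} of mass $(1-\beta)^n$; the paper instead accumulates an \emph{increasing} sequence of fixed points $h_0 \le h_1 \le \dots$ with $\|h_n\| \ge 1-\delta(1-\beta)^n$ and $\limsup_{t}\distPos(T_tf-h_n)<\varepsilon$ for every $n$. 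The induction step writes $T_{t_0}f - h_n = g_n - e_n$ with $g_n,e_n \ge 0$ and $\|e_n\|<\varepsilon$ (Remark~\ref{rem:good-approximation-on-base-norm-spaces}), feeds the unit vector $g_n/\|g_n\|$ back into hypothesis~(ii) to obtain a new fixed point $a_n := \|g_n\|\, h_{g_n/\|g_n\|}$ of norm at least $\beta(1-\|h_n\|)$, and sets $h_{n+1}:=h_n+a_n$; the identity $T_{t_0+s}f - h_{n+1} = T_sg_n - a_n - T_se_n$, valid precisely because $h_n$ is a fixed point, yields $\limsup_s\distPos(T_sf-h_{n+1}) \le \|e_n\| < \varepsilon$. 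Note that the errors do \emph{not} accumulate -- each step's bound $\|e_n\|<\varepsilon$ replaces rather than adds to the previous one -- so your plan to choose times $t_k$ with $\sum_k\|r_{t_k}\|$ summable is not needed. Additivity of the norm on $X_+$ then serves twice: it makes the increasing norm-bounded sequence $(h_n)$ Cauchy, with limit a fixed point $h_{f,\varepsilon}$ of norm at least $1$ which $\varepsilon$-dominates the orbit asymptotically; and it closes the argument via the mass computation $1+2\varepsilon > \|T_tf+e\| = \|h_{f,\varepsilon}\|+\|g\| \ge 1+\|g\|$ for a decomposition $T_tf - h_{f,\varepsilon}=g-e$ with $\|e\|<2\varepsilon$, giving $\|T_tf-h_{f,\varepsilon}\|<4\varepsilon$ and hence the Cauchy property of the orbit. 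As written, your text is a correct plan rather than a proof: the two devices you still need are the use of the fixedness of $h_n$ to transport the decomposition forward in time, and the use of norm-additivity to convert approximate positivity plus the tight mass budget into an actual norm estimate. Your concluding remarks on extending convergence from $X_+$ to $X$ and on recovering $\|Pf\|\ge\gamma\|f\|$ after undoing the renorming are correct.
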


For the proof of the non-trivial implication in Theorem~\ref{thm:ind-lower-bounds-which-are-fixed-points-imply-convergence} we borrow an essential idea from the proof of \cite[Theorem~4.4]{GerlachLB}. However, the proof of the latter result cannot be completely transferred to our situation since, at some steps, it heavily uses the lattice structure of the underlying space. As our space does not need to be a lattice, we thus need some new arguments.

The following observation which is, in a sense, a particularly neat version of Proposition~\ref{prop:asymptotic-domination-via-error-function}, will come in handy in the proof of Theorem~\ref{thm:ind-lower-bounds-which-are-fixed-points-imply-convergence}.

\begin{rem} \label{rem:good-approximation-on-base-norm-spaces}
	Assume that $X$ is a base norm space and let $\varepsilon > 0$. If a vector $f \in X$ fulfils $\distPos(f) < \varepsilon$, then we can find vectors $g,e \in X_+$ such that $f = g - e$ and $\|e\| < \varepsilon$.
\end{rem}
\begin{proof}
	There exists a vector $u \in X_+$ such that $\|f-u\| < \varepsilon$ and by formula~\eqref{eq:base-norm-formula} we can find $d,e \in X_+$ for which we have $\|d\|+\|e\| < \varepsilon$ and $f-u = d-e$. Hence, $f = u+d-e$, so the assertion follows with $g := u+d$.
\end{proof}

\begin{proof}[Proof of Theorem~\ref{thm:ind-lower-bounds-which-are-fixed-points-imply-convergence}]
	The implication ``(i) $\Rightarrow$ (ii)'' is obvious. For the converse implication ``(ii) $\Rightarrow$ (i)'' it suffices to prove that (ii) implies strong convergence of the semigroup. Set $\beta := \inf\{\|h_f\|: \; f \in X_+, \; \|f\| = 1\} > 0$. We proceed in three steps.
	
	(1) According to Corollary~\ref{cor:ind-lower-bouds-renorming} we may assume that $X$ is even a base norm space, meaning that the norm fulfils the equality~\eqref{eq:base-norm-formula}; we may also assume throughout the proof that $\T$ is a Markov semigroup.

	(2) Let $\varepsilon > 0$ and let $f \in X_+$ be of norm $\|f\| = 1$. We show that there exists a vector $h_{f,\varepsilon} \in X_+$ such that $\|h_{f,\varepsilon}\| \ge 1$ and such that $\limsup_{t \to \infty} \distPos(T_tf - h_{f,\varepsilon}) \le \varepsilon$. To this end, first note that $\|h_f\| \le 1$; this follows from Proposition~\ref{prop:asymptotic-domination-via-error-function} and from the fact that $\|T_tf\| = 1$ for all $t \in J$. Hence the number $\delta := 1 - \|h_f\|$ is non-negative. Let us construct an increasing sequence $(h_n)_{n \in \N_0} \subseteq E_+$ of fixed points of $\T$ which fulfils the following two properties:
	\begin{enumerate}[label=(\alph*)]
		\item For each $n \in \N_0$ we have $\|h_n\| \ge 1 - \delta (1-\beta)^n$.
		\item For each $n \in \N_0$ we have $\limsup_{t \to \infty} \distPos(T_tf - h_n) < \varepsilon$.
	\end{enumerate}
	We choose $h_0 := h_f$, which clearly fulfils~(a) and~(b) and which is a fixed point of $\T$ be assumption. Assume now that, for some $n \in \N_0$, the vector $h_n$ has already been constructed. If $\|h_n\| \ge 1$, then we simply define $h_{n+1} := h_n$, which clearly fulfils~(a) and~(b). If $\|h_n\| < 1$, then we proceed as follows. According to property~(b), there exists a time $t_0 \in J$ such that $\distPos(T_{t_0}f - h_n) < \varepsilon$. Remark~\ref{rem:good-approximation-on-base-norm-spaces} thus implies the existence of vectors $g_n,e_n \in X_+$ which fulfil $\|e_n\| < \varepsilon$ and $T_{t_0}f - h_n = g_n - e_n$. Note that we have
	\begin{align*}
		\|g_n\| + \|h_n\| = \|g_n + h_n\| = \|T_{t_0}f + e_n\| \ge \|T_{t_0}f\| = 1,
	\end{align*}
	so $\|g_n\| \ge 1-\|h_n\| > 0$. We define $a_n := \|g_n\| \cdot h_{g_n/\|g_n\|}$ and $h_{n+1} := h_n + a_n$. As $a_n$ and $h_n$ are fixed points of $\T$, so is $h_{n+1}$. We have to show that $h_{n+1}$ fulfils~(a) and~(b). Observe that
	\begin{align*}
		\|h_{n+1}\| & = \|h_n\| + \|a_n\| \ge \|h_n\| + \|g_n\| \beta \ge \|h_n\| + (1-\|h_n\|) \beta \\
		& = \beta + \|h_n\|(1-\beta) \ge \beta + \big( 1 - \delta (1-\beta)^n \big) (1-\beta) = 1 - \delta (1-\beta)^{n+1};
	\end{align*}
	for the last inequality we used that $\beta \le 1$, which follows from the fact that the vector $h_f$ has norm at most $1$. Hence, $h_{n+1}$ fulfils~(a). To see~(b), we let $s\in J$ and compute that
	\begin{align*}
		T_{t_0+s}f - h_{n+1} = T_s (h_n+g_n-e_n) - T_s h_n - a_n = T_sg_n - a_n - T_se_n
	\end{align*}
	(where we used for the first equality that $h_n$ is a fixed point of $\T$) and hence,
	\begin{align*}
		\distPos(T_{t_0+s}f - h_{n+1}) & \le \distPos(T_s g_n - a_n) + \|T_se_n\| \to \|e_n\|
	\end{align*}
	as $s \to \infty$. Therefore, we have $\limsup_{t \to \infty} \distPos(T_tf - h_{n+1}) \le \|e_n\| < \varepsilon$, i.e.\ $h_{n+1}$ fulfils~(b).
	
	Now, as we have the sequence $(h_n)_{n \in \N_0}$ available, it is easy to obtain a vector $h_{f,\varepsilon}$ with the properties that we claimed above. Indeed, note that we have $\|h_n\| < 1 + \varepsilon$ for each $n \in \N_0$; this follows from property~(b) and from Remark~\ref{rem:good-approximation-on-base-norm-spaces} above. Since the sequence $(h_n)_{n \in \N_0} \subseteq X_+$ is increasing and norm-bounded it is, as the norm is additive on $X_+$, a Cauchy sequence; thus it converges to a vector $h_{f,\varepsilon} \in X_+$. We conclude from~(a) that $h_{f,\varepsilon}$ has norm at least $1$ and from~(b) that $\limsup_{t \to \infty} \distPos(T_tf - h_{f,\varepsilon}) \le \varepsilon$ as claimed.
	
	(3) Now, consider a vector $f \in X_+$ of norm $\|f\| = 1$. We show that the net $(T_t f)_{t \in J}$ is a Cauchy net in $X$ and hence convergent; this implies the assertion. So, let $\varepsilon >0$. By what we have shown above, there exists a time $t_0 \in J$ such that $\distPos(T_tf - h_{f,\varepsilon}) < 2\varepsilon$ for all $t \ge t_0$. Fix $t \ge t_0$. According to Remark~\ref{rem:good-approximation-on-base-norm-spaces} we can decompose $T_tf - h_{f,\varepsilon}$ as
	\begin{align*}
		T_tf - h_{f,\varepsilon} = g - e,
	\end{align*}
	where $g,e \in X_+$ and where $\|e\| < 2\varepsilon$. Hence, we have $T_tf + e = h_{f,\varepsilon} + g$ and therefore,
	\begin{align*}
		1 + 2\varepsilon > 1 + \|e\| = \|T_tf + e\| = \|h_{f,\varepsilon}\| + \|g\| \ge 1 + \|g\|,
	\end{align*}
	which proves that $\|g\| < 2\varepsilon$, too. We conclude that $\|T_tf - h_{f,\varepsilon}\| = \|g-e\| < 4\varepsilon$. Since $t \ge t_0$ was arbitrary we obtain $\|T_tf - T_sf\| < 8\varepsilon$ for all $t,s \ge t_0$. This proves that $(T_tf)_{t \in J}$ is indeed a Cauchy net, as claimed.
\end{proof}

The assumption in Theorem~\ref{thm:ind-lower-bounds-which-are-fixed-points-imply-convergence} that the lower bounds $h_f$ be fixed points is, of course, a bit peculiar. The easiest way to overcome it is to require that the semigroup is mean ergodic:

\begin{corollary} \label{cor:mean-ergodic-stable}
	Let $X$ be an AN-space. Let $\T = (T_t)_{t \in J}$ be a bounded positive one-parameter semigroup on $X$ (where $J = \N_0$ or $J = [0,\infty)$) and assume that $\T$ is strongly continuous in case $J = [0,\infty)$. Suppose that the following hypotheses hold:
	\begin{enumerate}
		\item For every $f \in X_+$ of norm $1$ there exists a lower bound $h_f \in X_+$ of $(\T,f)$ such that $\beta := \inf_{f} \|h_f\| > 0$,
		\item $\T$ is mean ergodic.
	\end{enumerate}
	Then $\T$ is strongly convergent.
\end{corollary}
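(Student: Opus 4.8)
The plan is to reduce the statement to Theorem~\ref{thm:ind-lower-bounds-which-are-fixed-points-imply-convergence} by manufacturing, out of each given lower bound $h_f$, a \emph{fixed-point} lower bound, using the mean ergodic projection of $\T$.

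First I would invoke Corollary~\ref{cor:ind-lower-bouds-renorming}(a): its hypothesis~(a) is exactly hypothesis~(1) of the present statement, so after passing to an equivalent norm we may assume that $X$ is a base norm space and that every $T_t$ is a Markov operator. Mean ergodicity, boundedness and the lower-bound hypothesis all survive this renorming (the last one by Remark~\ref{rem:renorming-allowed}), so write $\beta := \inf\{\|h_f\|: f \in X_+,\ \|f\| = 1\} > 0$ for the (possibly new) infimum. Let $P := \lim_{t \to \infty} A_t(\T)$ be the mean ergodic projection. Then $P$ is positive (a strong limit of the positive operators $A_t(\T)$), its range is $\fix(\T)$, it commutes with every $T_t$, and it is itself a Markov operator: for $g \in X_+$ one has $T_u g \ge 0$, so by additivity of the norm on $X_+$ we get $\|A_t(\T)g\| = \frac1t\int_0^t \|T_u g\|\dx u = \|g\|$ (and $\frac1n\sum_{k=0}^{n-1}\|T^k g\| = \|g\|$ when $J = \N_0$), whence $\|Pg\| = \|g\|$.

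The crux is the claim that for every $f \in X_+$ of norm $1$ the vector $P h_f$ is again a lower bound of $(\T,f)$. The tempting direct route — applying $P$ to an inequality $h_f \le T_t f + r(t)$ with $r(t) \to 0$ — fails, because $P$ does not act as the identity along the orbit $(T_t f)_{t}$. Instead I would consider the set $L_f := \{h \in X_+ : h \asd (T_t f)_{t \in J}\}$ of all lower bounds of $(\T,f)$ and verify that it is \textbf{(i)} convex, by subadditivity and positive homogeneity of $\distPos$; \textbf{(ii)} norm-closed, by the Lipschitz continuity of $\distPos$ (Proposition~\ref{prop:distance-to-positive-cone}(c)); and \textbf{(iii)} invariant under each $T_s$, since for $h \in L_f$ and $t \ge s$ one has $T_t f - T_s h = T_s(T_{t-s}f - h)$, hence $\distPos(T_t f - T_s h) \le \|T_s\|\,\distPos(T_{t-s}f - h) \to 0$. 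Since $h_f \in L_f$ and each Cesàro mean $A_t(\T)h_f$ is a convex combination of the vectors $T_s h_f \in L_f$ (in the continuous case: an element of their closed convex hull, approximated by Riemann sums of the strong integral), we obtain $A_t(\T)h_f \in L_f$ for all $t$; letting $t \to \infty$ and using the closedness of $L_f$ gives $P h_f = \lim_t A_t(\T)h_f \in L_f$, which is the claim.

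Finally, $P h_f$ lies in $\fix(\T)$ and satisfies $\|P h_f\| = \|h_f\| \ge \beta > 0$, so $\inf_f \|P h_f\| \ge \beta > 0$. Thus the family $(P h_f)_f$ witnesses assertion~(ii) of Theorem~\ref{thm:ind-lower-bounds-which-are-fixed-points-imply-convergence} for $\T$, and the implication ``(ii)~$\Rightarrow$~(i)'' of that theorem yields the strong convergence of $\T$. The only step requiring a genuine idea is the claim of the third paragraph — observing that the set of lower bounds is closed, convex and $\T$-invariant and then feeding the mean ergodic theorem into it; everything else is bookkeeping, using the standard facts that the mean ergodic projection is positive with range $\fix(\T)$ and that a strong integral of a continuous function lies in the closed convex hull of its range.
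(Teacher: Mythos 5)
Your proposal is correct and follows essentially the same route as the paper: renorm via Corollary~\ref{cor:ind-lower-bouds-renorming} to make $X$ a base norm space and $\T$ a Markov semigroup, observe that the set of lower bounds of $(\T,f)$ is convex, closed and $\T$-invariant so that the mean ergodic limit $\lim_t A_t(\T)h_f$ is a fixed-point lower bound of norm at least $\beta$, and then apply Theorem~\ref{thm:ind-lower-bounds-which-are-fixed-points-imply-convergence}. The only difference is that you spell out in more detail the verification that the set of lower bounds has these three properties, which the paper states without elaboration.
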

\begin{proof}
	By Proposition~\ref{prop:renorming-to-become-a-base-norm-space} and Corollary \ref{cor:ind-lower-bouds-renorming} we can assume that $X$ is a base norm space and that $\T$ is a Markov semigroup. By hypothesis~(ii) the net \\$(A_t(\T)h_f)_{t > 0}$ converges to a $\T$--in\-va\-ri\-ant vector $g_f \in X_+$. Since all $A_t(\T)$ are Markov operators, we have $\|g_f\| \ge \beta$. Moreover, $g_f$ is a lower bound for $(\T,f)$ as the set of all lower bounds for $(\T,f)$ is $\T$-invariant (as a consequence of formula~\eqref{eq:positive-operator-and-distance-to-cone}), convex and closed. Now the assertion follows from Theorem~\ref{thm:ind-lower-bounds-which-are-fixed-points-imply-convergence}
\end{proof}

In the subsequent Theorem~\ref{thm:ind-lower-bounds-convergence} we give a deeper sufficient condition in order to get rid of the assumption that every lower bound $h_f$ be a fixed point of $\T$. For its proof we need also the following lemma:

\begin{lemma}\label{lem:use-of-bidual}
	Let $X$ be an AN--space which is a projection band in its bi-dual $X''$. Let $\T = (T_t)_{t \in \N_0} = (T^n)_{n \in \N_0}$ be a semigroup of Markov operators and let $f \in X_+$ be a vector of norm $1$.
	
	If $h_f \in X_+$ is a (mean) lower bound of $(\T,f)$, then there also exists a (mean) lower bound $g_f \in X_+$ of $(\T,f)$ which is a fixed point of $\T$ and which fulfils $\|g_f\| = \|h_f\|$.
\end{lemma}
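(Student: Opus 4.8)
The plan is to manufacture, inside the bidual $X''$, a positive fixed point of $T$ that dominates $h_f$, to transport it into a genuine fixed point of $\T$ in $X$ via the band projection, and then to run a mean ergodic argument for $h_f$, which becomes available once its Ces\`aro means are order bounded. I carry this out for an ordinary lower bound; the case of a mean lower bound is verbatim the same with $(T^nf)_n$ replaced by $(A_n(\T)f)_n$ everywhere. \emph{Step 1 (a fixed point above $h_f$ in $X''$).} Fix a Banach limit $b$ on $\ell^\infty(\N_0)$ and define $f^* \in X''$ by $\langle f^*, x'\rangle := b\big((\langle x', T^nf\rangle)_n\big)$ for $x' \in X'$; this makes sense since $\|T^nf\| = \|f\| = 1$ for all $n$ (as $\T$ is Markov). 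Positivity of $b$ gives $f^* \in X''_+$, and shift invariance of $b$ gives $\langle T''f^*, x'\rangle = \langle f^*, T'x'\rangle = b\big((\langle x', T^{n+1}f\rangle)_n\big) = \langle f^*, x'\rangle$, i.e.\ $T''f^* = f^*$. Moreover $f^* \ge h_f$: for every $x' \in X'_+$ one has $\langle x', T^nf - h_f\rangle \ge -\|x'\|\distPos(T^nf - h_f) \to 0$, so $\langle f^* - h_f, x'\rangle \ge 0$, which by the definition of the cone $X''_+$ means $f^* - h_f \ge 0$.

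\emph{Step 2 (transporting into $X$).} Let $P \in \L(X'')$ be the band projection with range $X$ and set $H := Pf^* \in X_+$. Since $P \ge 0$ and $h_f \in X$, we get $H = Pf^* \ge Ph_f = h_f$. To see that $H$ is a fixed point of $\T$, write $w'' := (\id_{X''}-P)f^* \ge 0$; applying $P$ to $f^* = T''f^* = T''H + T''w'' = TH + T''w''$ (here $T''H = TH \in X$, the canonical embedding intertwining $T$ and $T''$) yields $H = TH + PT''w''$, so $H - TH = PT''w'' \ge 0$. Since the norm is additive on $X_+$ and $\|TH\| = \|H\|$ by the Markov property, it follows that $\|H - TH\| = 0$, i.e.\ $TH = H$.

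\emph{Step 3 (the mean ergodic argument).} As $h_f \le H$ and $H$ is $\T$-fixed, $A_n(\T)h_f \le A_n(\T)H = H$ for all $n$, so $(A_n(\T)h_f)_n$ lies in the order interval $[0,H]$, which is weakly compact by Proposition~\ref{prop:band-in-bidual-implies-weakly-compact-intervals}; by the Eberlein--\v{S}mulian theorem a subsequence converges weakly to some $g_f \in [0,H] \subseteq X_+$. Then $g_f$ is a fixed point of $\T$, since $TA_n(\T)h_f - A_n(\T)h_f = \tfrac1n(T^nh_f - h_f) \to 0$ and $T$ is weakly continuous; and $\|g_f\| = \langle \one, g_f\rangle = \lim_n\langle \one, A_n(\T)h_f\rangle = \lim_n\|A_n(\T)h_f\| = \|h_f\|$, using the defining property of the norm functional $\one$ and the equality $\|A_n(\T)h_f\| = \|h_f\|$ (additivity of the norm together with the Markov property). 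Finally, $g_f$ is again a lower bound of $(\T,f)$: the set of all lower bounds of $(\T,f)$ lying in $X_+$ is convex and norm closed (Proposition~\ref{prop:distance-to-positive-cone}) and $\T$-invariant (by~\eqref{eq:positive-operator-and-distance-to-cone}), hence weakly closed; it contains every $A_n(\T)h_f$ and therefore the weak limit $g_f$.

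The step I expect to be the real obstacle is recognising that one must detour through the bidual at all: the hypothesis that $X$ be a projection band in $X''$ is used precisely to produce the fixed point $H \in X$ with $h_f \le H$, and this single order bound is what renders the orbit $(A_n(\T)h_f)_n$ relatively weakly compact so that the ergodic argument goes through. A secondary subtlety, flagged above, is that the transported vector $Pf^*$ must be verified to be an honest fixed point of $\T$ in $X$: this does not follow from $P$ and $T''$ commuting (which may fail), but from the additivity of the norm on the positive cone combined with the Markov property.
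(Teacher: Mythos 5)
Your proof is correct and follows essentially the same route as the paper: pass to the bidual to obtain a $\T''$-fixed point dominating $h_f$ (the paper takes a weak${}^*$ cluster point of the Ces\`aro means where you use a Banach limit), push it into $X$ with the band projection, and exploit weak compactness of the resulting order interval to extract a $\T$-fixed ergodic limit $g_f$ that remains a lower bound. The one point you flag as a subtlety --- verifying that $Pf^*$ is an honest fixed point via norm additivity --- is actually dispensable: the paper only establishes $T(Pf^*) \le Pf^*$, which already makes the order interval $[0,Pf^*]$ invariant and is all the ergodic argument needs.
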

\begin{proof}
	We consider $X$ as a subspace of its bidual $X''$ by means of evaluation. The sequence of Ces{\`a}ro means $(A_t(\T) f)_{t \in \N}$ has a subnet which converges with respect to the weak${}^*$-topology on $X''$ (i.e.\ the topology on $X''$ which is induced by $X'$) to a fixed point $x'' \in X''_+$ of the bi-dual semigroup $\T'' := (T_t'')_{t \in \N_0}$. One readily checks that $x'' \ge h_f$.
	
	Now, let $P \in \L(X'')$ be the band projection onto $X$. Then we have $0 < h_f = Ph_f \le Px''=:y$, i.e.\ $h_f$ is contained in the order interval $[0,y]$ in $X$. On the other hand, we have $T_t y = P T_t'' Px'' \le P T_t'' x'' = Px'' = y$ for all $t \in \N_0$, so the order interval $[0,y]$ in $X$ is invariant under the action of $\T$. Thus, the orbit of $h_f$ under $\T$ is contained in $[0,y]$. Since $[0,y]$ is weakly compact according to Proposition~\ref{prop:band-in-bidual-implies-weakly-compact-intervals}, we conclude from the mean ergodic theorem (see e.g.\ \cite[Theorem 1.1.7]{Emelyanov2007}) that the sequence of Ces{\`a}ro means $(A_t(\T) h_f)_{t \in \N}$ converges in norm to a fixed point $g_f \in X_+$ of $\T$. Note that $\|g_f\| = \|h_f\|$ since the semigroup $\T$ consists of Markov operators. Moreover, the set of all (mean) lower bounds for $(\T,f)$ is easily checked to be $\T$-invariant, convex and norm closed. Thus, $g_f$ is a (mean) lower bound of $(\T,f)$.
\end{proof}

\begin{theorem} \label{thm:ind-lower-bounds-convergence}
	Let $X$ be an AN--space and let $\T = (T_t)_{t \in J}$ be a bounded positive one-parameter semigroup on $X$ (where $J = \N_0$ or $J = [0,\infty)$). Assume that the following hypotheses hold:
	\begin{enumerate}
		\item[(i)] For every $f \in X_+$ of norm $1$ there exists a lower bound $h_f \in X_+$ of $(\T,f)$ such that $\inf\{\|h_f\|: \; f \in X_+, \; \|f\| = 1\} > 0$.
		\item[(ii)] The space $X$ is a projection band in its bi-dual.
	\end{enumerate}
	Then $\T$ is strongly convergent.
\end{theorem}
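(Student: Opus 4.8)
The plan is to reduce everything to Theorem~\ref{thm:ind-lower-bounds-which-are-fixed-points-imply-convergence}: I will produce, for each $f \in X_+$ of norm $1$, a lower bound of $(\T,f)$ which is in addition a \emph{fixed point} of $\T$, in such a way that the norms of these lower bounds stay bounded away from $0$.

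The first step is a renorming. By Corollary~\ref{cor:ind-lower-bouds-renorming} I may replace the norm of $X$ by an equivalent one for which $X$ is a base norm space and every $T_t$ is a Markov operator (hence, on a base norm space, a contraction). Being a projection band in the bidual, as well as generating-ness and normality of the cone, are clearly unaffected by equivalent renormings, and by Remark~\ref{rem:renorming-allowed} hypothesis~(i) survives as well; so all hypotheses persist, and since strong convergence is a statement about the (renorming-invariant) norm topology, it is enough to argue in the new norm. Put $\beta := \inf\{\|h_f\| : f \in X_+,\ \|f\| = 1\} > 0$.

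The core step is the construction of the fixed-point lower bounds. Fix $f \in X_+$ of norm $1$ together with a lower bound $h_f$. When $J = \N_0$ this is precisely the content of Lemma~\ref{lem:use-of-bidual}, which yields a fixed point $g_f \in X_+$ of $\T$ that is a lower bound of $(\T,f)$ with $\|g_f\| = \|h_f\| \ge \beta$. When $J = [0,\infty)$ I would imitate the proof of that lemma, the key difference being that, in the absence of a $C_0$-assumption, Ces\`aro means must be avoided. Concretely: fix a Banach limit $b$ on $\ell^\infty(J)$ and define $y'' \in X''_+$ by $\langle y'', x'\rangle := \big\langle b, (\langle x', T_t f\rangle)_{t \in J}\big\rangle$ for $x' \in X'$ (the functions $t \mapsto \langle x', T_t f\rangle$ are bounded since the $T_t$ are contractions). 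Shift-invariance of $b$ makes $y''$ a fixed point of the bidual semigroup $(T_t'')_{t \in J}$, and the inequality $h_f \le T_t f + r(t)$ with $r(t) \to 0$ furnished by Proposition~\ref{prop:asymptotic-domination-via-error-function} forces $\langle y'', x'\rangle \ge \langle h_f, x'\rangle$ for all positive $x' \in X'$, i.e.\ $y'' \ge h_f$. Let $P \in \L(X'')$ be the band projection onto $X$ and set $y := Py'' \in X_+$. Then $y \ge P h_f = h_f$; moreover $y'' \ge y$, so $T_t y \le T_t'' y'' = y''$, and applying the positive operator $P$ (which is the identity on $X$) gives $T_t y = P(T_t y) \le Py'' = y$ for every $t$. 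Hence $[0,y]_X$ is a $\T$-invariant order interval containing $h_f$, and it is weakly compact by Proposition~\ref{prop:band-in-bidual-implies-weakly-compact-intervals}. The whole orbit $\{T_t h_f : t \in J\}$ is therefore contained in the weakly compact convex set $[0,y]_X$, so its closed convex hull $K$ is nonempty, weakly compact, convex and $\T$-invariant; the Markov--Kakutani fixed point theorem, applied to the commuting family of weakly continuous affine maps $T_t|_K$, produces a common fixed point $g_f \in K$. This $g_f$ is a lower bound of $(\T,f)$ because the set of lower bounds of $(\T,f)$ is convex, norm-closed and, by~\eqref{eq:positive-operator-and-distance-to-cone}, $\T$-invariant, and it contains $h_f$; and $\|g_f\| = \|h_f\| \ge \beta$ because the norm functional $\one$ is norm-continuous and satisfies $\langle \one, T_t h_f\rangle = \|T_t h_f\| = \|h_f\|$ for all $t$, so $\langle \one, \cdot\rangle$ is constantly $\|h_f\|$ on $K$.

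Once the family $(g_f)_{f}$ is available, hypothesis~(ii) of Theorem~\ref{thm:ind-lower-bounds-which-are-fixed-points-imply-convergence} holds (with the uniform lower bound $\beta$), and that theorem delivers strong convergence of $\T$, as desired. The essential difficulty is the continuous-time case, and within it the construction of a $\T$-invariant weakly compact order interval around $h_f$: no ergodic averaging is available without a continuity hypothesis on $\T$, and the remedy used above --- manufacturing a genuine fixed point of the bidual semigroup by a Banach limit and then pulling it into $X$ via the band projection --- is exactly the place where the assumption that $X$ be a projection band in its bidual is indispensable.
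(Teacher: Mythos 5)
Your proof is correct, and for $J=\N_0$ it coincides with the paper's: renorm via Corollary~\ref{cor:ind-lower-bouds-renorming}, upgrade the lower bounds to fixed-point lower bounds via Lemma~\ref{lem:use-of-bidual}, and finish with Theorem~\ref{thm:ind-lower-bounds-which-are-fixed-points-imply-convergence}. Where you genuinely diverge is the continuous-time case. The paper disposes of $J=[0,\infty)$ by observing that every discretisation $(T_{nt})_{n\in\N_0}$ inherits hypothesis~(i) (a lower bound along the continuous parameter is a fortiori one along the subsequence $nt$), hence converges strongly by the discrete case, and then invokes the external result \cite[Theorem~2.1]{GerlachLB} to pass from strong convergence of all discretisations back to strong convergence of $(T_t)_{t\ge 0}$. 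You instead extend Lemma~\ref{lem:use-of-bidual} itself to continuous time: since Ces\`aro means are unavailable without a $C_0$-hypothesis, you manufacture a fixed point of the bidual semigroup by a Banach limit, pull it into $X$ with the band projection to obtain a $\T$-invariant weakly compact order interval $[0,y]$ containing the orbit of $h_f$, and extract a common fixed point by Markov--Kakutani rather than by the mean ergodic theorem; the verification that this fixed point is still a lower bound of norm $\|h_f\|$ (convexity, norm-closedness and $\T$-invariance of the set of lower bounds, constancy of $\langle\one,\cdot\rangle$ on the orbit's closed convex hull) is sound, and Theorem~\ref{thm:ind-lower-bounds-which-are-fixed-points-imply-convergence} indeed applies to $J=[0,\infty)$ without any continuity assumption. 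Your route is longer but self-contained, avoiding the citation of \cite{GerlachLB}, and as a by-product it shows that Lemma~\ref{lem:use-of-bidual} holds verbatim for continuous-parameter Markov semigroups; the paper's route is shorter at the price of an external ingredient. Both are valid proofs.
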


We point out that the condition $\inf\{\|h_f\|: \; f \in X_+, \; \|f\| = 1\} > 0$ in hypothesis~(i) of the above theorem cannot be dropped, in general -- even if $X$ is an $L^1$-space; a counterexample can be found in \cite[Example~4.3]{GerlachLB}. Before we prove Theorem~\ref{thm:ind-lower-bounds-convergence} we present a couple of important special cases:

\begin{corollary}\label{cor:to-ind-lower-bounds-convergence}
	Let $X$ be an AN--space and let $\T = (T_t)_{t \in J}$ be a bounded positive one-parameter semigroup on $X$ (where $J = \N_0$ or $J = [0,\infty)$). Assume that hypothesis~(i) from Theorem~\ref{thm:ind-lower-bounds-convergence} holds.
	
	Then each of the following conditions ensures that $\T $ is strongly convergent:
	\begin{enumerate}	
		\item[(a)] $X$ is reflexive.
		\item[(b)] $X$ is an AL-Banach lattice (for instance $X = L^1(\Omega, \Sigma, \mu)$ where $(\Omega, \Sigma ,\mu)$ is a measure space).
		\item[(c)] $X$ is the predual of a von Neumann algebra, or a noncommutative $L^1$-sspace with respect to a $\sigma$-finite trace.
	\end{enumerate}
\end{corollary}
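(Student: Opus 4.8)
The plan is to obtain all three statements as special cases of Theorem~\ref{thm:ind-lower-bounds-convergence}. Hypothesis~(i) of that theorem is assumed in the corollary, so in each of the cases~(a), (b), (c) it suffices to check that $X$ is a projection band in its bidual, i.e.\ hypothesis~(ii). Before doing so I would record the elementary fact that every AN--space has a generating cone (by definition) and a normal cone (since an additive norm is monotone); this guarantees that $X''$ carries a natural order making it an ordered Banach space, so that Definition~\ref{def:band-in-bidual} is meaningful for $X$.

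For~(a), reflexivity of $X$ means that the evaluation map $j\colon X\hookrightarrow X''$ is surjective. Hence $X=X''$ as ordered Banach spaces and the identity operator on $X''$ is a band projection with range $X$; thus $X$ is (trivially) a projection band in its bidual and Theorem~\ref{thm:ind-lower-bounds-convergence} applies.

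For~(b) and~(c) I would simply quote structural facts already recalled in the paper. A Banach lattice is a projection band in its bidual precisely when it is a KB-space, and every AL-Banach lattice is a KB-space (see the discussion after Definition~\ref{def:band-in-bidual} and Remark~\ref{rem:kb-for-non-lattices}); this settles~(b). For~(c) one uses that the predual of a von Neumann algebra is a projection band in its bidual (again noted after Definition~\ref{def:band-in-bidual}), together with the observation that a noncommutative $L^1$-space with respect to a $\sigma$-finite trace is taken with respect to a semi-finite trace and hence is the predual of a semi-finite von Neumann algebra.

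I do not anticipate a genuine obstacle: the whole argument is a case distinction in which each branch reduces to a fact already established (or cited) earlier in the text, the substantive content lying entirely in Theorem~\ref{thm:ind-lower-bounds-convergence}. The only two points worth spelling out are the normality of the cone of an AN--space, which is needed merely so that ``projection band in the bidual'' is defined, and the remark that a $\sigma$-finite trace is semi-finite, so that case~(c) is indeed covered by the statement on von Neumann algebra preduals.
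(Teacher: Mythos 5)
Your proposal is correct and follows essentially the same route as the paper: all three cases are reduced to hypothesis~(ii) of Theorem~\ref{thm:ind-lower-bounds-convergence}, with (a) being trivial by reflexivity, (b) via the KB-space property of AL-Banach lattices, and (c) via the fact that von Neumann algebra preduals (including noncommutative $L^1$-spaces for a semi-finite, in particular $\sigma$-finite, trace) are projection bands in their biduals. Your additional remarks on the normality of the cone of an AN--space and on $\sigma$-finite versus semi-finite traces are accurate and only make the argument more explicit than the paper's.
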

\begin{proof}
	Assertion (a) trivially follows from Theorem~\ref{thm:ind-lower-bounds-convergence}. Assertions~(b) follows from the fact that each AL-Banach lattice is a so-called KB-space and is thus a projection band in thus bi-dual (see for instance Theorem~2.4.12, Corollary~2.4.13 and Theorem~1.2.9 in \cite{Meyer-Nieberg1991}); alternatively, (b) follows from (c) as every AL-Banach lattice is the predual of commutative von Neumann algebra (see \cite[Theorem~II.9.3]{Schaefer74}). Assertion~(c) follows from \cite[Proposition~1.17.7]{Sakai1971}.
\end{proof}	

The assertion about AL-Banach lattices in part~(b) of the above corollary was proved by Gerlach and the first of the present authors in \cite[Theorem~4.4 and Corollary~4.5]{GerlachLB}. Part~(c) of the above corollary shows that the same result remains true on preduals of von Neumann algebras.

\begin{proof}[Proof of Theorem~\ref{thm:ind-lower-bounds-convergence}]
	By Proposition~\ref{prop:renorming-to-become-a-base-norm-space} and Corollary~\ref{cor:ind-lower-bouds-renorming} we may assume throughout the
proof that $X$ is a base norm space and that $\T$ is a Markov semigroup.

	First let $J = \N_0$. By Lemma~\ref{lem:use-of-bidual} we may assume that each lower bound $h_f$ in hypothesis~(i) is a fixed point of the semigroup $\T$. Thus, the assertion follows from Theorem~\ref{thm:ind-lower-bounds-which-are-fixed-points-imply-convergence}.
	
	Now, let $J = [0, \infty)$. For each $t>0$ the discrete semigroup $(T_{nt})_{n \in\N_0}$ also fulfils the hypotheses of the theorem; hence, by what we have just proved, this semigroup is strongly convergent. According to \cite[Theorem~2.1]{GerlachLB} this implies that the semigroup $(T_t)_{t \in [0,\infty)}$ is strongly convergent, too.
\end{proof}

The following question still remains open.

\begin{open_problem}
	Does Theorem~\ref{thm:ind-lower-bounds-convergence} remain true if we do not assume hypothesis (ii)? If not, does it remain true in case that we only assume that all order intervals in $X$ be weakly compact?
\end{open_problem}

Let us close this subsection with another consequence of Theorem~\ref{thm:ind-lower-bounds-which-are-fixed-points-imply-convergence}. The following corollary is a generalisation of \cite[Corollary~4.6]{GerlachLB} where the same result was proved only on AL-Banach lattices. The result is remarkable in the sense that it concludes convergence of a \emph{dominating} semigroup from convergence of the dominated semigroup -- while most related results in the literature (see for instance \cite{Emelyanov2001} as well as our results in Section~\ref{section:long-term-behaviour-of-asymp-dom-sg}) work the other way round.

\begin{corollary} \label{cor:dominating-sg-converges}
	Let $X$ be an AN--space and let $J = \N_0$ or $J = [0,\infty)$. Let  $\T = (T_t)_{t \in J}$ and $\SS = (S_t)_{t \in J}$ be two bounded positive semigroups on $X$ such that $\SS$ asymptotically dominates $\T$.
	
	If $\T$ is strongly convergent and if the limit operator $P_\T := \lim_{t \to \infty} T_t$ fulfils $\|P_\T f\| \ge \beta \|f\|$ for a number $\beta > 0$ and all $f \in X_+$, then $\SS$ is also strongly convergent.
\end{corollary}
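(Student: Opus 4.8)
The plan is to verify that $\SS$ satisfies condition~(ii) of Theorem~\ref{thm:ind-lower-bounds-which-are-fixed-points-imply-convergence} and then invoke that theorem. The natural candidates for the lower bounds are the obvious ones: for $f \in X_+$ with $\|f\| = 1$ set $h_f := P_\T f$. First I would check that $h_f$ is indeed a lower bound of $(\SS,f)$ in the sense of Definition~\ref{def:individual-lower-bounds}. Since $P_\T$ is positive (it is the strong limit of the positive operators $T_t$) and $\|P_\T f\| \ge \beta > 0$, the vector $h_f$ is strictly positive. Moreover, by the Lipschitz continuity of $\distPos$ (Proposition~\ref{prop:distance-to-positive-cone}(c)),
\[
  \bigl|\distPos(S_t f - h_f) - \distPos(S_t f - T_t f)\bigr| \le \|T_t f - P_\T f\|,
\]
and the right-hand side tends to $0$ because $\T$ is strongly convergent, while $\distPos(S_t f - T_t f) \to 0$ because $\SS$ asymptotically dominates $\T$; hence $\distPos(S_t f - h_f) \to 0$, i.e.\ $h_f$ is a lower bound of $(\SS,f)$. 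Note that $\inf\{\|h_f\| : f \in X_+,\ \|f\| = 1\} \ge \beta > 0$.

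Next I would normalise the situation. Since $X$ is an AN--space, $\SS$ is bounded and positive, and the lower bounds $h_f$ have norms bounded away from $0$, Corollary~\ref{cor:ind-lower-bouds-renorming} allows us to replace the norm of $X$ by an equivalent additive norm for which $X$ is a base norm space and every $S_t$ is a Markov operator. This change of norm affects none of the hypotheses: $\SS$ still asymptotically dominates $\T$, $\T$ is still strongly convergent with positive limit operator $P_\T$, and the vectors $P_\T f$ (with $\|f\| = 1$) still have norms bounded below by a positive constant. From here on I assume that $X$ is a base norm space and that $\SS$ consists of Markov operators.

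The decisive step is to show that each $h_f = P_\T f$ is in fact a \emph{fixed point} of $\SS$; more generally I would prove $\fix(\T) \cap X_+ \subseteq \fix(\SS)$. Let $g \in \fix(\T) \cap X_+$, so that $T_t g = g$ for all $t$ and therefore $\distPos(S_t g - g) = \distPos(S_t g - T_t g) \to 0$. Given $\varepsilon > 0$, choose $t$ so large that $\distPos(S_t g - g) < \varepsilon$ and use Remark~\ref{rem:good-approximation-on-base-norm-spaces} to write $S_t g - g = u_t - e_t$ with $u_t, e_t \in X_+$ and $\|e_t\| < \varepsilon$. Then $S_t g + e_t = g + u_t$, and taking norms (which are additive on $X_+$) together with $\|S_t g\| = \|g\|$ (the Markov property) gives $\|u_t\| = \|e_t\| < \varepsilon$, whence $\|S_t g - g\| \le \|u_t\| + \|e_t\| < 2\varepsilon$. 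Thus $S_t g \to g$ in norm as $t \to \infty$. Fixing $s \in J$ and applying the bounded operator $S_s$ yields $S_{s+t} g = S_s(S_t g) \to S_s g$, while on the other hand $S_{s+t} g \to g$; hence $S_s g = g$, and since $s$ was arbitrary, $g \in \fix(\SS)$.

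Finally, I would conclude: for every $f \in X_+$ of norm $1$, the vector $h_f = P_\T f \in \fix(\T) \cap X_+$ is a lower bound of $(\SS,f)$ which is a fixed point of $\SS$, and $\inf\{\|h_f\| : f \in X_+,\ \|f\|=1\} > 0$; thus condition~(ii) of Theorem~\ref{thm:ind-lower-bounds-which-are-fixed-points-imply-convergence} holds for $\SS$, and that theorem gives the strong convergence of $\SS$ (which, being a statement about the norm topology, then also holds for the original norm). I expect the main obstacle to be the third step: one has to realise that passing to the Markov normalisation is precisely what makes the rigidity of the additive norm usable, turning the mere asymptotic domination $\distPos(S_t g - g) \to 0$ into genuine norm convergence $S_t g \to g$ and then into the identity $S_s g = g$. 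Without the Markov property the norm bookkeeping breaks down, and it is not clear a priori that a lower bound inherited from $\T$ should be $\SS$-invariant.
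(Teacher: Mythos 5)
Your proposal is correct and follows essentially the same route as the paper: take $h_f := P_\T f$ as lower bounds for $(\SS,f)$, renorm via Corollary~\ref{cor:ind-lower-bouds-renorming} so that $X$ is a base norm space and $\SS$ is Markov, use the additivity of the norm to upgrade $\distPos(S_t g - g)\to 0$ to $S_t g \to g$ and hence to $\SS$-invariance of the lower bounds, and conclude with Theorem~\ref{thm:ind-lower-bounds-which-are-fixed-points-imply-convergence}. The only cosmetic difference is that you prove the convergence step inline, whereas the paper delegates it to Lemma~\ref{lem:base-norm-spaces-domination-implies-convergence}.
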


Note that the assumption $\|P_\T f\| \ge \gamma \|f\|$ for all $f \in X_+$ in the above corollary is automatically fulfilled for $\gamma = 1$ if $\T$ is a Markov semigroup. To derive Corollary~\ref{cor:dominating-sg-converges} from Theorem~\ref{thm:ind-lower-bounds-which-are-fixed-points-imply-convergence} we need the following simple result.

\begin{lemma} \label{lem:base-norm-spaces-domination-implies-convergence}
	Le $X$ be an AN--space, let $J = \N_0$ or $J = [0,\infty)$ and consider a mapping $f: J \mapsto X_+$. If $f$ asymptotically dominates a constant vector $x \in X_+$ of norm $\|x\| = 1$ and if $\limsup_{t \to \infty} \|f(t)\| \le 1$, then $f(t)$ converges to $x$ as $t \to \infty$.
\end{lemma}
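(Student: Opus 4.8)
The plan is to reduce to the situation of a base norm space and then combine the additivity of the norm on $X_+$ with the approximation provided by Remark~\ref{rem:good-approximation-on-base-norm-spaces}.

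\textbf{Reduction.} By Proposition~\ref{prop:renorming-to-become-a-base-norm-space} we may replace the given norm on $X$ by an equivalent norm $\|\cdot\|_1$ which coincides with it on $X_+$ and which turns $X$ into a base norm space. I would first check that this does not affect the hypotheses: since $x$ and every $f(t)$ lies in $X_+$, we still have $\|x\|_1 = 1$ and $\limsup_{t\to\infty}\|f(t)\|_1 \le 1$; moreover $\distPos$ computed with respect to $\|\cdot\|_1$ differs from the original one only by bounded factors, so the assumption $x \asd f$ (i.e.\ $\distPos(f(t)-x)\to 0$) is preserved, and conversely norm convergence of $f(t)$ to $x$ is the same for both norms. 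Hence we may and do assume that $X$ itself is a base norm space.

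\textbf{Main estimate.} Fix $\varepsilon > 0$. Since $x \asd f$ means $\distPos(f(t)-x) \to 0$, there is $t_0 \in J$ with $\distPos(f(t)-x) < \varepsilon$ for all $t \ge t_0$. For each such $t$, Remark~\ref{rem:good-approximation-on-base-norm-spaces} applied to $f(t)-x$ yields vectors $g_t, e_t \in X_+$ with $f(t) - x = g_t - e_t$ and $\|e_t\| < \varepsilon$. Rearranging gives $f(t) + e_t = x + g_t$, and since both sides are sums of positive vectors, additivity of the norm on $X_+$ yields $\|f(t)\| + \|e_t\| = 1 + \|g_t\|$, hence $\|g_t\| \le \|f(t)\| + \varepsilon - 1$. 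Because $\limsup_{t\to\infty}\|f(t)\| \le 1$, there is $t_1 \ge t_0$ with $\|f(t)\| \le 1 + \varepsilon$ for all $t \ge t_1$; for such $t$ we then have $\|g_t\| \le 2\varepsilon$ and therefore $\|f(t) - x\| = \|g_t - e_t\| \le \|g_t\| + \|e_t\| < 3\varepsilon$.

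As $\varepsilon > 0$ was arbitrary, $f(t) \to x$ in norm. I do not expect any real difficulty here: the only step that needs a little care is verifying that the renorming preserves all hypotheses (in particular that asymptotic domination is insensitive to passing to an equivalent norm), and after that the argument is the short direct computation above.
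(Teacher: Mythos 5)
Your proof is correct, and it rests on the same mechanism as the paper's: write $f(t)-x$ as a difference $g_t-e_t$ of positive vectors with $\|e_t\|$ small, then use additivity of the norm on $X_+$ together with $\limsup_{t\to\infty}\|f(t)\|\le 1$ to force $\|g_t\|$ to be small as well. The only structural difference is how you reach that decomposition. The paper stays with the original norm: it takes the error function $r$ from Proposition~\ref{prop:asymptotic-domination-via-error-function} (so $f(t)+r(t)-x\ge 0$ plays the role of your $g_t$ and $r(t)$ that of your $e_t$) and evaluates norms of positive vectors against the norm functional $\one$, which gives $\|f(t)+r(t)-x\|=\|f(t)\|+\|r(t)\|-\|x\|$ directly. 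You instead first renorm via Proposition~\ref{prop:renorming-to-become-a-base-norm-space} so as to invoke Remark~\ref{rem:good-approximation-on-base-norm-spaces}. Your reduction is sound -- the new norm agrees with the old one on $X_+$, so $\|x\|_1$ and $\|f(t)\|_1$ are unchanged, and equivalence of norms preserves the condition $\distPos(f(t)-x)\to 0$ as well as norm convergence -- but it is also avoidable, since Proposition~\ref{prop:asymptotic-domination-via-error-function} already supplies the needed decomposition on any AN--space (the cone is generating by definition) without passing to a base norm. What your route buys is a pointwise-in-$t$ decomposition with an explicit $\varepsilon$; what the paper's buys is a one-line norm computation with no renorming. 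Both yield the same estimate $\|f(t)-x\|\le\|g_t\|+\|e_t\|$, and your bookkeeping ($\|g_t\|\le 2\varepsilon$, hence $\|f(t)-x\|<3\varepsilon$) is accurate.
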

\begin{proof}
	Let $\eins \in X'$ denote the norm functional on $X$. Since $f \das x$, Proposition~\ref{prop:asymptotic-domination-via-error-function} shows that there exists a mapping $r: J \to X_+$ satisfying $\lim_{t \to \infty} r(t) = 0$ as well as $x \le f(t) + r(t)$ for all $t \in J$. This inequality implies $\liminf_{t \to \infty}\|f(t)\| \ge \|x\| = 1$, hence $\lim_{t \to \infty} \|f(t)\| = \|x\|$. Using again that $f(t) + r(t) - x$ is positive, we obtain
	\begin{align*}
		\|f(t)-x\| & \le \|f(t) + r(t) - x\| + \|r(t)\| \\
		& = \langle \eins, f(t) \rangle + \langle \eins, r(t) \rangle - \langle \eins, x\rangle + \|r(t)\| \\
		& = \|f(t)\| - \|x\| + 2\|r(t)\| \to 0
	\end{align*}
	as $t \to \infty$. This proves the assertion.
\end{proof}

\begin{proof}[Proof of Corollary~\ref{cor:dominating-sg-converges}]
	For each $f \in X_+$ of norm $1$ the vector $P_\T f$ is a lower bound for $(\T,f)$ and thus also for $(\SS,f)$. Moreover, we have $\inf\{\|Pf\|: \; f \in X_+, \; \|f\| = 1\} \ge \beta > 0$. According to Corollary~\ref{cor:ind-lower-bouds-renorming} we may thus endow $X$ with an equivalent norm $\|\cdot\|_1$ such that $(X,\|\cdot\|_1)$ becomes a base norm space and such that $\SS$ becomes a Markov semigroup with respect to $\|\cdot\|_1$.
	
	Of course we have $\|P_\T f\|_1 \ge \tilde \gamma \|f\|_1$ for a constant $\tilde \gamma > 0$ and for all $f \in X_+$. Moreover, $P_\T f$ is also a lower bound for $(\T,f)$ and $(\SS,f)$ with respect to the norm $\|\cdot\|_1$ for each $f \in X_+$ which satisfies $\|f\|_1 = 1$.
	
	Now, fix $f \in X_+$ such that $\|f\|_1 = 1$. Clearly, $P_\T f$ is a fixed point of $\T$. Now we show that it is also of fixed point of $\SS$: the net $(S_tP_\T f)_{t \in J}$ asymptotically dominates the constant net $(T_tP_\T f)_{t \in J} = (P_\T f)_{t \in J}$. Since $\SS$ is a Markov semigroup with respect to $\|\cdot\|_1$ we have $\|S_t P_\T f\|_1 = \|P_\T f\|_1$ for all $t \in J$, so it follows from Lemma~\ref{lem:base-norm-spaces-domination-implies-convergence} that $S_t P_\T f$ converges to $P_\T f$ as $t \to \infty$. Hence, $P_\T f$ is a fixed point of $\SS$.
	
	Hence, the assertion follows from the implication ``(ii) $\Rightarrow$ (i)'' in Theorem~\ref{thm:ind-lower-bounds-which-are-fixed-points-imply-convergence}.
\end{proof}

\subsection{Universal lower bounds and strong convergence}

Let us now consider the case where the lower bound $h$ of $(\T,f)$ is universal. We start with a technical lemma:

\begin{lemma} \label{lem:fixed-space}
	Let $J = \N_0$ or $J = [0,\infty)$ and let $\T = (T_t)_{t \in J}$ be a Markov semigroup on an AN--space $X$. Assume that $\T$ is strongly continuous in case that $J = [0,\infty)$ and suppose that there exists a universal mean lower bound $h > 0$ for $\T$. Then the following assertions hold.
	\begin{enumerate}
		\item[(a)] The fixed space $\fix(\T')$ of the dual semigroup is spanned by the norm functional $\one \in X'$.
		\item[(b)] The space $\overline{N(\T)} := \{x \in X: \; \lim_{t \to \infty}A_t(\T)x = 0\}$ is equal to $\ker(\one)$.
	\end{enumerate}
\end{lemma}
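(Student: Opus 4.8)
The plan is to establish part~(b) first and then deduce part~(a) from it by duality. By Proposition~\ref{prop:renorming-to-become-a-base-norm-space} we may assume that $X$ is a base norm space: passing to this equivalent norm does not change the norm on $X_+$, hence changes neither the norm functional $\one$ nor the subspace $\ker(\one)$ nor the fact that $\T$ consists of Markov operators, and it affects neither strong continuity nor the property that $h$ is a universal mean lower bound (asymptotic domination depends on the norm only up to equivalence). Since $h > 0$ we have $X \ne \{0\}$, so $\one \ne 0$. Put $\beta := \|h\|$; applying the positive functional $\one$ to an inequality $h \le A_t(\T)f + r(t)$ furnished by Proposition~\ref{prop:asymptotic-domination-via-error-function} for a fixed $f \in X_+$ of norm $1$ gives $\beta = \langle \one, h\rangle \le 1 + \|r(t)\| \to 1$, so $\beta \in (0,1]$.

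For part~(b), the inclusion $\overline{N(\T)} \subseteq \ker(\one)$ is immediate, since $\one$ is $\T'$-invariant and therefore $\langle \one, x\rangle = \langle \one, A_t(\T)x\rangle \to 0$ whenever $A_t(\T)x \to 0$. For the converse I would work with the seminorm $L$ on $\ker(\one)$ given by $L(y) := \limsup_{t \to \infty}\|A_t(\T)y\|$, which satisfies $0 \le L(y) \le \|y\|$, and show that $L \equiv 0$. The main estimate is: for $u, v \in X_+$ of norm $1$ one has $\limsup_{t \to \infty}\|A_t(\T)(u-v)\| \le 2(1-\beta)$. Indeed, $A_t(\T)u$ and $A_t(\T)v$ again have norm $1$ because $A_t(\T)$ is Markov; by Proposition~\ref{prop:asymptotic-domination-via-error-function} there are null functions $r_u, r_v \colon J \to X_+$ with $a(t) := A_t(\T)u + r_u(t) \ge h$ and $b(t) := A_t(\T)v + r_v(t) \ge h$. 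As $h \ge 0$ is a common lower bound of the positive vectors $a(t), b(t)$, additivity of the norm on $X_+$ yields $\|a(t) - b(t)\| \le \|a(t) - h\| + \|b(t) - h\| = (\|a(t)\| - \beta) + (\|b(t)\| - \beta) = 2 - 2\beta + \|r_u(t)\| + \|r_v(t)\|$, and since $a(t) - b(t)$ differs from $A_t(\T)(u-v)$ only by a null term, the claim follows. Writing an arbitrary $y \in \ker(\one)$ as $y = \rho(\hat u - \hat v)$ with $\hat u, \hat v \in X_+$ of norm $1$ and $2\rho$ as close to $\|y\|$ as we like — possible since $X$ is a base norm space and $\langle \one, y\rangle = 0$ forces the two positive parts of a near-optimal decomposition of $y$ to have equal norm — one concludes $L(y) \le (1-\beta)\|y\|$ for all $y \in \ker(\one)$.

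The bootstrap is then as follows. A short, routine computation with the Cesàro means, using only that $\T$ is bounded, shows $\|A_t(\T)A_s(\T) - A_t(\T)\| \to 0$ as $t \to \infty$ for each fixed $s$: in the discrete case the coefficient of $T^m$ in $A_n(\T)A_s(\T)$ equals that in $A_n(\T)$ for $s-1 \le m \le n-1$ and differs only for the $O(s)$ remaining values of $m$, each of weight $O(1/n)$; the time-continuous case is analogous. Hence $L(A_s(\T)y) = L(y)$ for all $s$ and all $y \in \ker(\one)$. Now fix $y \in \ker(\one)$ and pick $t_j \to \infty$ with $\|A_{t_j}(\T)y\| \to L(y)$; setting $z_j := A_{t_j}(\T)y \in \ker(\one)$ we obtain $L(y) = L(z_j) \le (1-\beta)\|z_j\| \to (1-\beta)L(y)$, so $\beta L(y) \le 0$ and hence $L(y) = 0$. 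This gives $\ker(\one) \subseteq \overline{N(\T)}$ and proves~(b). Part~(a) is now easy: since $\T$ is Markov, $\one \in \fix(\T')$, so $\lin\{\one\} \subseteq \fix(\T')$; conversely, if $\phi \in \fix(\T')$ then $\langle \phi, x\rangle = \langle \phi, A_t(\T)x\rangle \to 0$ for every $x \in \overline{N(\T)} = \ker(\one)$, so $\phi$ vanishes on the hyperplane $\ker(\one)$ and is therefore a scalar multiple of $\one$.

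The step I expect to be the crux is the bootstrap. The naive attempt to improve the crude bound $L(y) \le 2(1-\beta)$ (on densities) by iterating — replacing $y$ with $A_s(\T)y$ — does not literally work, because $A_t(\T)A_s(\T)$ is not a Cesàro mean of $\T$. What rescues the argument is precisely the operator-norm statement $A_t(\T)A_s(\T) - A_t(\T) \to 0$, which lets the homogeneous bound $L(y) \le (1-\beta)\|y\|$ be fed back on itself along the orbit; this also plays, implicitly, the role of the classical "improve the lower bound to norm $1$'' step in Lasota--Yorke type arguments.
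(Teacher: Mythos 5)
Your proof is correct, but it takes a genuinely different route from the paper's. The paper proves part~(a) first, by contradiction: assuming $\dim\fix(\T')\ge 2$, it constructs a functional $\varphi_4\in\fix(\T')\cap[0,\one]$, linearly independent of $\one$ and ``extremal'' in the sense that $\varphi_4-\varepsilon\one\not\ge 0$ for every $\varepsilon>0$; after replacing $\varphi_4$ by $\one-\varphi_4$ if necessary to ensure $\delta:=\langle\varphi_4,h\rangle>0$, the universal mean lower bound forces $\langle\varphi_4,x\rangle=\lim_t\langle\varphi_4,A_t(\T)x\rangle\ge\langle\varphi_4,h\rangle\,\|x\|=\delta\langle\one,x\rangle$ for all $x\in X_+$, i.e.\ $\varphi_4\ge\delta\one$ --- a contradiction. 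Part~(b) is then obtained by citing Yosida's ergodic theorem, which identifies $\overline{N(\T)}$ with the annihilator of $\fix(\T')$. You go the other way round: you prove~(b) directly via the quantitative contraction estimate $\limsup_t\|A_t(\T)y\|\le(1-\|h\|)\,\|y\|$ on $\ker(\one)$, bootstrap it to zero using the approximate idempotence $\|A_t(\T)A_s(\T)-A_t(\T)\|\to 0$, and then deduce~(a) by elementary duality (a fixed functional of $\T'$ annihilates $\overline{N(\T)}=\ker(\one)$ and is therefore a multiple of $\one$). All the individual steps check out: the identity $\|a(t)-h\|=\|a(t)\|-\|h\|$ is exactly additivity of the norm on $X_+$; applying $\one$ to any decomposition of $y\in\ker(\one)$ shows its two positive parts have equal norm, which is what turns the crude bound $2(1-\|h\|)$ into the homogeneous one after the base-norm renorming; and the telescoping estimate for $A_t(\T)A_s(\T)-A_t(\T)$ is valid in both the discrete and the strongly continuous case. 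Your argument is longer but more self-contained --- it needs neither the extremal construction in the dual order interval $[0,\one]$ nor an appeal to Yosida's theorem --- and it is closer in spirit to the Lasota--Yorke lower-bound technique that drives the rest of Section~4; the paper's argument is shorter but leans on the order structure of $X'$ and on an external ergodic theorem.
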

\begin{proof}
	(a) Assume for a contradiction that $\fix(\T')$ is at least two-dimensional. Then we find a vector $\varphi_1 \in \fix(\T')$ which is linearly independent of $\one$ and has norm at most $1/2$; by Proposition~\ref{prop:properties-of-norm-functional}(b) we have $\varphi_1 \in [-\frac{\one}{2}, \frac{\one}{2}]$. We define $\varphi_2 := \frac{\one}{2} + \frac{\varphi_1}{2}$ and thus obtain a vector $\varphi_2 \in \fix(\T')$ which is linearly independent of $\one$ and contained in $[0,\one]$. Set $\hat \alpha = \sup\{\alpha \in \R: \; \varphi_2 - \alpha \one \ge 0\} \in [0,1]$ and define $\varphi_3 := \varphi_2 - \hat \alpha \one$. Again, $\varphi_3$ is a vector in $\fix(\T')$ which is linearly independent of $\one$ and which fulfils $\varphi_3 \in [0,\one]$; moreover, we have $\varphi_3 \not= 0$ and $\varphi_3 - \varepsilon \one \not\ge 0$ for any $\varepsilon > 0$. Set $\hat \beta := \sup\{\beta \in [0,\infty): \; \beta \varphi_3 \in [0,\one]\} \in [1,\infty)$ and finally define $\varphi_4 := \hat \beta \varphi_3$. Then $\varphi_4$ is a vector in $\fix(\T')$ which is linearly independent of $\one$, which is contained in $[0,\one]$ and which fulfils $\varphi_4 \pm \varepsilon \one \not\in [0,\one]$ for any $\varepsilon > 0$.
	
	If we set $\varphi_5 = \one - \varphi_4$, then $\varphi_5$ has exactly the same properties that we have just listed for $\varphi_4$. We have $\langle \varphi_4,h\rangle + \langle\varphi_5, h \rangle = \|h\| > 0$, hence one of the summands does not vanish. Without loss of generality we may thus assume that $\delta := \langle \varphi_4, h \rangle > 0$.

	For all $t \in J$ we have $T_t'\varphi_4 = \varphi_4$; for $x \in X_+$ and $t > 0$ this yields $\langle \varphi_4, x\rangle = \langle \varphi_4, A_s(\T) x\rangle$ and hence
	\begin{align*}
		\langle \varphi_4,x\rangle = \lim_{t \to \infty}\langle \varphi_4, A_s(T)x\rangle \ge \langle \varphi_4, h\rangle \|x\| = \langle \delta \one, x \rangle.	
	\end{align*}
	Thus $\varphi_4 \ge \delta \one$, which is a contradiction.

	(b) By Yosida's ergodic theorem (see \cite[Theorem~1.1.9]{Emelyanov2007} for the time discrete case; the time continuous case is similar) the space $\overline{N(\T)}$ coincides with the annihilator of $\fix(\T')$, which in turn equals the kernel of $\one$ by~(a).
\end{proof}

With the aid of this lemma  we obtain another interesting consequence of Theorem~\ref{thm:ind-lower-bounds-convergence}

\begin{theorem} \label{thm:unif-lower-bounds-convergence}
	Let $X$ be an ordered Banach space with generating cone and let $J = \N_0$ or $J = [0,\infty)$. Let $\T = (T_t)_{t \in J}$ be a positive and bounded semigroup on $X$.
	
	If there exists a universal lower bound $h > 0$ of $\T$, then each of the following conditions implies that $T_t$ converges strongly to a rank-$1$ projection as $t \to \infty$:
	\begin{enumerate}[label=(\alph*)]
		\item\label{thm:unif-lower-bounds-convergence:item:band-in-bidual} The space $X$ is a projection band in its bi-dual.
		
		\item\label{thm:unif-lower-bounds-convergence:item:l_1-space} The space $X$ is an AL-Banach lattice or the pre-dual of a von-Neumann algebra.
		
		\item\label{thm:unif-lower-bounds-convergence:item:reflexive} The space $X$ is reflexive.
		
		\item\label{thm:unif-lower-bounds-convergence:item:mean-ergodic} We have $J = \N_0$ and $\T$ is mean ergodic, or we have $J = [0,\infty)$ and $\T$ is a mean ergodic $C_0$-semigroup.
		
		\item\label{thm:unif-lower-bounds-convergence:item:non-trivial-fixed-space} The fixed space of $\T$ contains a non-zero element.
	\end{enumerate}
\end{theorem}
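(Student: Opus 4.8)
The plan is to combine the renorming result of Corollary~\ref{cor:unif-mean-lower-bounds-renorming}, the convergence theorem~\ref{thm:ind-lower-bounds-convergence}, Corollary~\ref{cor:mean-ergodic-stable} and Lemma~\ref{lem:fixed-space}. First I would apply Corollary~\ref{cor:unif-mean-lower-bounds-renorming} to pass to an equivalent norm, reducing to the situation where $X$ is a base norm space, every $T_t$ is a Markov operator, and --- by Remark~\ref{rem:renorming-allowed} --- $h$ remains a universal lower bound for $\T$; note that all five conditions~\ref{thm:unif-lower-bounds-convergence:item:band-in-bidual}--\ref{thm:unif-lower-bounds-convergence:item:non-trivial-fixed-space} are invariant under equivalent renorming. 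Next I would observe that only two of the conditions need a separate treatment: conditions~\ref{thm:unif-lower-bounds-convergence:item:l_1-space} and~\ref{thm:unif-lower-bounds-convergence:item:reflexive} both imply~\ref{thm:unif-lower-bounds-convergence:item:band-in-bidual} (an AL-Banach lattice and the pre-dual of a von Neumann algebra are projection bands in their biduals by the facts quoted in Section~\ref{section:prelim-and-basics}; and if $X$ is reflexive, then the bipositive evaluation map is onto $X''$, so $\id_{X''}$ is a band projection with range $X$), whereas condition~\ref{thm:unif-lower-bounds-convergence:item:mean-ergodic} implies~\ref{thm:unif-lower-bounds-convergence:item:non-trivial-fixed-space}, because the mean ergodic projection of a mean ergodic Markov semigroup is itself a Markov operator, hence nonzero, so its range $\fix(\T)$ is nontrivial.

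The heart of the proof is the following time-discrete assertion: \emph{if $\T = (T^n)_{n \in \N_0}$ is a Markov semigroup on a base norm space with a universal lower bound $h > 0$, and if in addition $X$ is a projection band in its bidual or else $\fix(\T) \neq \{0\}$, then $\T$ converges strongly to a rank-one projection.} In the projection-band case, hypotheses~(i) and~(ii) of Theorem~\ref{thm:ind-lower-bounds-convergence} hold with $h_f := h$, so $\T$ is strongly convergent; let $P$ denote its limit, which is a Markov operator and a projection, hence nonzero. Since a universal lower bound is automatically a universal mean lower bound in the time-discrete setting, Lemma~\ref{lem:fixed-space}(b) gives $\overline{N(\T)} = \ker(\one)$; and as $\T$ is strongly convergent, $A_n(\T)x \to Px$ for every $x$, so every $x \in \overline{N(\T)}$ lies in $\ker P$. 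Thus $\ker P \supseteq \ker(\one)$, a subspace of codimension one, and $P \neq 0$ forces $\ker P = \ker(\one)$, so $P$ has rank one. In the case $\fix(\T) \neq \{0\}$, fix $0 \neq w \in \fix(\T)$; then $A_n(\T)w = w$, and Lemma~\ref{lem:fixed-space}(b) gives $w \notin \ker(\one) = \overline{N(\T)}$, while $\fix(\T) \cap \overline{N(\T)} = \{0\}$. Hence $X = \fix(\T) \oplus \overline{N(\T)}$ with $\dim \fix(\T) = 1$, so (using $\sup_n \|A_n(\T)\| < \infty$) the semigroup is mean ergodic; Corollary~\ref{cor:mean-ergodic-stable} then yields strong convergence, and the limit is the projection onto the one-dimensional space $\fix(\T)$.

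Finally I would derive the general statement from this discrete core. For $J = \N_0$ there is nothing more to do. For $J = [0,\infty)$, I would apply the discrete core to the subsemigroup $(T_{nt_0})_{n \in \N_0}$ for each $t_0 > 0$: it is a Markov semigroup, it inherits the universal lower bound $h$ via Proposition~\ref{prop:asd-elementary-properties}(b), and it inherits whichever of the two surviving conditions we are in (being a projection band in the bidual is a property of $X$, and $\fix(\T) \subseteq \fix((T_{nt_0})_{n})$). Hence each $(T_{nt_0})_{n}$ converges strongly to a rank-one projection $P_{t_0}$. Strong convergence of the full semigroup $(T_t)_{t \ge 0}$ follows from Theorem~\ref{thm:ind-lower-bounds-convergence} in case~\ref{thm:unif-lower-bounds-convergence:item:band-in-bidual} and from \cite[Theorem~2.1]{GerlachLB} in the remaining cases; and since $\lim_{t \to \infty} T_t = \lim_{n \to \infty} T_{nt_0} = P_{t_0}$, the limit operator is the rank-one projection $P_{t_0}$.

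I expect the step requiring the most care to be the rank-\emph{one} (as opposed to merely finite-rank) conclusion, and in particular the observation that the apparently weak hypothesis~\ref{thm:unif-lower-bounds-convergence:item:non-trivial-fixed-space} already forces mean ergodicity: this rests entirely on the identity $\overline{N(\T)} = \ker(\one)$ supplied by Lemma~\ref{lem:fixed-space}(b), which is the only point at which the \emph{universality} of the lower bound is genuinely used. By comparison, the passage from the time-discrete core to the time-continuous case (strong continuity is not assumed, so everything must be routed through discrete subsemigroups) is routine, but it does need to be carried out with some care.
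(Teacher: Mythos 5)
Your proposal is correct and follows essentially the same route as the paper: renorm via Corollary~\ref{cor:unif-mean-lower-bounds-renorming} to a base norm space with a Markov semigroup, obtain strong convergence from Theorem~\ref{thm:ind-lower-bounds-convergence} (resp.\ Corollary~\ref{cor:mean-ergodic-stable} after establishing mean ergodicity via Lemma~\ref{lem:fixed-space}), treat $J=[0,\infty)$ through the discrete subsemigroups $(T_{nt_0})_{n}$ together with \cite[Theorem~2.1]{GerlachLB}, and extract the rank-one statement from Lemma~\ref{lem:fixed-space}. The only (harmless) cosmetic deviations are that you reduce condition~\ref{thm:unif-lower-bounds-convergence:item:mean-ergodic} to~\ref{thm:unif-lower-bounds-convergence:item:non-trivial-fixed-space} and argue the mean ergodicity in case~\ref{thm:unif-lower-bounds-convergence:item:non-trivial-fixed-space} by an explicit decomposition $X=\fix(\T)\oplus\ker(\one)$ rather than via Sine's theorem, and that you derive rank one from Lemma~\ref{lem:fixed-space}(b) instead of part~(a).
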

\begin{proof}
	According to Corollary \ref{cor:unif-mean-lower-bounds-renorming} we may assume that $X$ is an AN--space (even a base norm space). We first prove that $\T$ is strongly convergent.
	
	If~(a) is fulfilled, strong convergence of $\T$ follows from Theorem~\ref{thm:ind-lower-bounds-convergence}; if~(b) or~(c) is fulfilled, strong convergence follows from Corollary~\ref{cor:to-ind-lower-bounds-convergence} and if~(d) is fulfilled it follows from Corollary~\ref{cor:mean-ergodic-stable}.
	
	Now, let~(e) be fulfilled. First assume that $J = \N_0$ and let $x \in \fix(\T) \setminus \{0\}$. Then $x = A_n(\T)x$ does not converge to $0$ as $n \to \infty$, so it follows from Lemma~\ref{lem:fixed-space}(b) that $x$ is not contained in the kernel of $\one$. Hence, $\langle \one,x\rangle \not= 0$, so $\fix(\T)$ separates the span of $\one$, which coincides with the fixed space of $\T'$ according to Lemma~\ref{lem:fixed-space}(a). Therefore, $\T$ is mean ergodic according to Sine's mean ergodic theorem (see e.g.\ \cite[Theorem~1.1.11]{Emelyanov2007}); thus, the assertion follows from~(e).
	
	Now, let $J = [0,\infty)$. For each $t > 0$ the time-discrete semigroup $(T_{nt})_{n \in \N_0}$ has the universal lower bound $h > 0$ and fulfils assumption~(e); thus, $(T_{nt})_{n \in \N_0}$ is strongly convergent. According to \cite[Theorem~2.1(a)]{GerlachLB} this already implies that $(T_t)_{t \in [0,\infty)}$ is strongly convergent.
	
	Finally we have to show that the strong limit $P_\T := \lim_{t \to \infty} T_t$ has rank $1$. Note that $P_\T = \lim_{n \to \infty} T_{n}$, so the range of the dual projection $P_\T'$ coincides with the fixed space of the semigroup $\T_1' := (T'_n)_{n \in \N_0}$. Since $h > 0$ is a universal lower bound for the semigroup $(T_n)_{n \in \N_0}$, it is also a universal mean lower bound for the same semigroup; it thus follows from Lemma~\ref{lem:fixed-space}(a) that the fixed space of $\T_1'$ is spanned by $\one$. Hence, the range of $P_\T'$ is one-dimensional and thus, so is the range of $P_\T$.
\end{proof}

The following question remains open.

\begin{open_problem}
	Does Theorem~\ref{thm:unif-lower-bounds-convergence} remain true if we do not assume any of the conditions~(a)--(e)? If not, does the theorem remain true without these assumptions at least in case that every order interval in $X$ is weakly compact?
\end{open_problem}

Another consequence of Lemma~\ref{lem:fixed-space} is the following \emph{mean} lower bound condition for mean ergodicity.

\begin{theorem} \label{thm:unif-mean-lower-bound-mean-ergodic}
	Let $X$ be an ordered Banach space with generating cone and let $J = \N_0$ or $J = [0,\infty)$. Let $\T = (T_t)_{t \in J}$ be a positive and bounded semigroup on $X$ and assume that $\T$ is a $C_0$-semigroup in case that $J = [0,\infty)$.
	
	If there exists a universal mean lower bound $h > 0$ of $\T$ and if $\T$ has a non-zero fixed point, then $\T$ is mean ergodic and the corresponding mean ergodic projection has rank $1$.
\end{theorem}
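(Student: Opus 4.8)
The plan is to reduce, via the renorming machinery of Section~\ref{section:lower-bound-theorems}, to the case of a Markov semigroup on a base norm space, and then to read off the conclusion from Lemma~\ref{lem:fixed-space} together with Sine's mean ergodic theorem. No strong convergence result (such as Theorem~\ref{thm:ind-lower-bounds-convergence}) is needed, which is consistent with the weaker conclusion.

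First I would apply Corollary~\ref{cor:unif-mean-lower-bounds-renorming}(b), whose hypotheses are met since $\T$ is positive, bounded and a $C_0$-semigroup in the case $J = [0,\infty)$: it provides an equivalent additive norm $\|\cdot\|_1$ on $X$ under which $(X,\|\cdot\|_1)$ is a base norm space and every $T_t$ is a Markov operator, and by Remark~\ref{rem:renorming-allowed}(b) a suitable positive multiple of $h$ is still a universal mean lower bound for $\T$ with respect to $\|\cdot\|_1$. Since mean ergodicity, the fixed space $\fix(\T)$, and the rank of a projection are all unaffected by passing to an equivalent norm, it suffices to prove the statement in this renormed situation; from now on let $\one \in X'_+$ denote the norm functional.

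The heart of the argument is then Lemma~\ref{lem:fixed-space}: part~(a) gives $\fix(\T') = \lin\{\one\}$ and part~(b) gives $\overline{N(\T)} = \ker(\one)$. I claim that $\fix(\T)$ is one-dimensional. By hypothesis it is non-zero. If $0 \ne x \in \fix(\T)$, then $A_t(\T)x = x$ for every $t$, hence $A_t(\T)x \not\to 0$, so $x \notin \overline{N(\T)} = \ker(\one)$, i.e.\ $\langle \one, x\rangle \ne 0$. Therefore $\fix(\T) \cap \ker(\one) = \{0\}$, and since $\ker(\one)$ is a closed hyperplane in $X$ this forces $\dim\fix(\T) \le 1$; together with $\fix(\T) \ne \{0\}$ this gives $\dim\fix(\T) = 1$.

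It remains to deduce mean ergodicity with a rank-$1$ projection. Since every non-zero $x \in \fix(\T)$ satisfies $\langle \one, x\rangle \ne 0$, the fixed space $\fix(\T)$ separates $\fix(\T') = \lin\{\one\}$. As $\T$ is contractive (a Markov semigroup on a base norm space is), it is Cesàro bounded, so Sine's mean ergodic theorem (see e.g.\ \cite[Theorem~1.1.11]{Emelyanov2007}; the continuous-time case is analogous) shows that $\T$ is mean ergodic, and its mean ergodic projection $P$ has range $\fix(\T)$, which is one-dimensional; hence $P$ has rank $1$. I do not expect a genuine obstacle in this argument: the delicate work — that the fixed space of the dual semigroup is one-dimensional — is already carried out in Lemma~\ref{lem:fixed-space}(a), and what remains is essentially bookkeeping, the only mildly technical points being the renorming reduction and the uniform treatment of the cases $J = \N_0$ and $J = [0,\infty)$ via the appropriate form of Sine's theorem.
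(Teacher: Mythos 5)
Your proposal is correct and follows essentially the same route as the paper: reduce (via Corollary~\ref{cor:unif-mean-lower-bounds-renorming} and Remark~\ref{rem:renorming-allowed}) to a Markov semigroup on a base norm space, use Lemma~\ref{lem:fixed-space} to identify $\fix(\T')=\lin\{\one\}$ and $\overline{N(\T)}=\ker(\one)$, observe that a non-zero fixed point cannot lie in $\ker(\one)$, and conclude by Sine's mean ergodic theorem. You are in fact slightly more explicit than the paper, which leaves the renorming step and the rank-$1$ conclusion implicit.
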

\begin{proof}
	Let $x \in \fix(\T) \setminus \{0\}$. Since $x = A_t(\T)x$ does not converge to $0$ as $t \to \infty$, it follows from Lemma~\ref{lem:fixed-space}(b) that $x$ is not contained in $\ker(\one)$. Therefore, $\langle \one,x\rangle \not= 0$ and thus $\fix(\T)$ separates the span of $\one$, which is in turn equal to $\fix(\T')$ by Lemma~\ref{lem:fixed-space}(a). Hence, $\T$ is mean ergodic
\end{proof}

As is already well--known the assumption in Theorem~\ref{thm:unif-mean-lower-bound-mean-ergodic} that $\T$ possesses a non-zero fixed point is redundant in each of the following two cases:
\begin{itemize}
	\item $X$ is an AL-Banach lattice (see see e.g.~\cite[Theorem~5]{Emelyanov2004} or~\cite[Theorem~3.2.2]{Emelyanov2007});
	\item $X$ is the pre-dual of a von Neumann algebra and moreover all operators are \emph{completely} positive (see \cite[Theorem~3]{Emelyanov2006} or \cite[Theorem~3.3.7]{Emelyanov2007});
\end{itemize}
(note that the second case includes the first one since the dual of an AL-Banach lattice is a commutative von Neumann algebra, cf.~\cite[Theorem~II.9.3]{Schaefer74}). The following corollary contains a generalisation of this result.

\begin{corollary} \label{cor:unif-mean-lower-bound-mean-ergodic}
	Let $X$ be an ordered Banach space with generating cone and let $J = \N_0$ or $J = [0,\infty)$. Let $\T = (T_t)_{t \in J}$ be a positive and bounded semigroup on $X$ and assume that $\T$ is a $C_0$-semigroup in case that $J = [0,\infty)$.
	
	Suppose that there exists a universal mean lower bound $h > 0$ of $\T$ and that at least one of the following assertions is fulfilled:
	\begin{enumerate}
		\item[(a)] $X$ is a projection band in its bi-dual; in particular
		 $X$ is an AL-Banach lattice or a pre-dual of a von-Neumann algebra.
		\item[(b)] $X$ is the dual of an ordered Banach space $Y$ and each operator $T_t$ has a pre-dual operator in $Y$.
	\end{enumerate}
	Then $\T$ is mean ergodic and the corresponding mean ergodic projection has rank $1$.
\end{corollary}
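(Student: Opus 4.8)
The plan is to reduce the whole statement to Theorem~\ref{thm:unif-mean-lower-bound-mean-ergodic}: by that theorem it is enough to exhibit a \emph{single} non-zero fixed point of $\T$, after which mean ergodicity and the rank-one property of the mean ergodic projection follow at once -- and since all of these conclusions are unaffected by passing to an equivalent norm, we are free to renorm whenever convenient. So the entire task is to prove $\fix(\T) \neq \{0\}$ in each of the two cases. Fix any $f \in X_+$ of norm $1$ and consider the net $\big(A_t(\T)f\big)_{t \in J \setminus \{0\}}$ of Ces{\`a}ro means, which is bounded because $\T$ is bounded. Since $h$ is a universal mean lower bound, $h$ is a mean lower bound for $(\T,f)$, so Proposition~\ref{prop:asymptotic-domination-via-error-function} furnishes a map $r \colon J \to X_+$ with $r(t) \to 0$ and $h \le A_t(\T)f + r(t)$ for all $t \in J$.

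\textbf{Case (a): $X$ is a projection band in its bidual.} Here I would first apply Corollary~\ref{cor:unif-mean-lower-bounds-renorming} to replace the norm of $X$ by an equivalent one making $X$ a base norm space and $\T$ a Markov semigroup; note that being a projection band in the bidual, and hence also having weakly compact order intervals (Proposition~\ref{prop:band-in-bidual-implies-weakly-compact-intervals}), is a purely order-theoretic property and therefore survives the renorming. From here the argument is that of Lemma~\ref{lem:use-of-bidual}, with the only change that the sequence of discrete Ces{\`a}ro means there is replaced by the net $\big(A_t(\T)f\big)_t$ (Eberlein's ergodic theorem and the Eberlein--\v{S}mulian type compactness arguments being valid for nets): pass to a $\sigma(X'',X')$-convergent subnet with limit $x'' \in X''_+$, use boundedness of $\T$ and weak$^*$ continuity of the bidual operators to see that $x'' \in \fix(\T'')$, and observe $x'' \ge h$ since $r(t) \to 0$. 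If $P \in \L(X'')$ is the band projection onto $X$ and $y := Px'' \in X_+$, then $0 \le h \le y$ and $T_t y \le y$ for all $t$, so the orbit of $h$ lies in the weakly compact order interval $[0,y]_X$; the mean ergodic theorem then yields $A_t(\T)h \to g \in \fix(\T)$ in norm with $\|g\| = \|h\| > 0$ because $\T$ is Markov.

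\textbf{Case (b): $X = Y'$ with $T_t = R_t'$ for operators $R_t$ on the ordered Banach space $Y$.} Here I would \emph{not} renorm, because the equivalent norm produced by Corollary~\ref{cor:unif-mean-lower-bounds-renorming} need not be a dual norm and the duality would be lost; instead I exploit the weak$^*$ topology $\sigma(Y',Y)$ directly. The bounded net $\big(A_t(\T)f\big)_t$ has a $\sigma(Y',Y)$-cluster point $\mu \in X$. Each $T_t = R_t'$ is $\sigma(Y',Y)$-continuous, and a one-line estimate from the boundedness of $\T$ gives $\|T_s A_t(\T)f - A_t(\T)f\| \to 0$ as $t \to \infty$ for every fixed $s \in J$; passing to the cluster point yields $T_s \mu = \mu$, i.e.\ $\mu \in \fix(\T)$. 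Finally $X_+ = Y'_+$ is $\sigma(Y',Y)$-closed (it is the polar of $-Y_+$), and $A_t(\T)f + r(t) - h \in X_+$ with $r(t) \to 0$, so the cluster point satisfies $\mu - h \in X_+$; thus $\mu \ge h > 0$, and in particular $\mu \neq 0$. With a non-zero fixed point available in both cases, Theorem~\ref{thm:unif-mean-lower-bound-mean-ergodic} completes the proof, and the ``in particular'' clause in~(a) is just the list of examples of projection bands in the bidual recorded after Definition~\ref{def:band-in-bidual}.

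\textbf{Main obstacle.} The difficulty is organisational rather than conceptual: one must recognise that the standard device of this section -- renorming $\T$ to a Markov semigroup -- is \emph{not} available in case~(b), since it would destroy the predual, so the fixed point has to be built by hand in the weak$^*$ topology and Theorem~\ref{thm:unif-mean-lower-bound-mean-ergodic} invoked only afterwards, using that its conclusion does not depend on the equivalent norm. The only other points requiring care are the weak$^*$-closedness of the positive cones entering the two limiting arguments ($Y'_+$ in case~(b), and $X''_+$ together with the identity $[0,y]_X = [0,y]_{X''}$ underlying Proposition~\ref{prop:band-in-bidual-implies-weakly-compact-intervals} in case~(a)).
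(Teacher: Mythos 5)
Your proposal is correct and follows essentially the same route as the paper: in both cases one extracts a weak${}^*$ cluster point of the Ces\`aro means (in $X''$ for case~(a), in $X=Y'$ for case~(b)), shows it dominates $h$ and yields a non-zero fixed point of $\T$, and then invokes Theorem~\ref{thm:unif-mean-lower-bound-mean-ergodic}. The only deviations are minor: in case~(a) the paper concludes more directly that $P f_0''$ is itself a fixed point (from $T_t Pf_0'' \le Pf_0''$ together with the Markov property and the additivity of the norm), whereas you take the slightly longer detour through the mean ergodic theorem on the weakly compact interval $[0,Pf_0'']$ as in Lemma~\ref{lem:use-of-bidual}; and your refusal to renorm in case~(b) is unnecessary caution (the weak${}^*$ topology and norm-boundedness are unaffected by passing to an equivalent norm, which is what the paper does), but harmless.
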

\begin{proof}
	According to Corollary~\ref{cor:unif-mean-lower-bounds-renorming} we may assume that $X$ is a base norm space and that $\T$ is a Markov semigroup.
	
	(a) Let $f \in X_+$ be a vector of norm $1$. We apply the Ces{\`a}ro means $A_t(\T)$ to $f$ and consider the net $(A_t(\T) f)_{t \in J}$ in the bi-dual space $X''$. This net has a subnet which converges to a vector $f_0'' \in X''$. Then $f_0''$ is a fixed point of the bi-dual semigroup $\T'' = (T_t'')_{t \ge 0}$ and, as $h$ is a universal mean lower bound of $\T$, we have $f_0'' \ge h$.
	
	Now, let $P \in \L(X'')$ denote the band projection onto $X$. Then we have $Pf_0'' \ge Ph = h$, so $Pf_0'' > 0$. Moreover, $T_tPf_0'' = PT_tPf_0'' \le P T_t'' f_0'' = Pf_0''$ for all $t \in J$. Since $\T$ is a Markov semigroup and since the norm is additive on $X_+$, this implies that actually $T_t Pf_0'' = Pf_0''$ for all $t \in J$, so $Pf_0''$ is a non-zero fixed vector of $\T$. Hence, the assertion follows from Theorem~\ref{thm:unif-mean-lower-bound-mean-ergodic}.

The remainder follows from the fact that AL--Banach lattices as well as pre-duals of von Neumman algebras are projection bands in their bidual.
	
	(b) Let $f \in X_+ \setminus \{0\}$. Then a subnet of $(A_t(\T)f)_{t \in J}$ converges to a vector $f_0 \in X_+$ with respect to the weak${}^*$-topology. As $\T$ has a mean lower bound we have $f_0 > 0$. Moreover, $f_0$ is a fixed point of $\T$ since each operator $T_t$ is weak${}^*$-weak${}^*$ continuous. Thus, the assertion follows again from Theorem~\ref{thm:unif-mean-lower-bound-mean-ergodic}.
\end{proof}

It is an interesting question whether the assumption in Theorem~\ref{thm:unif-mean-lower-bound-mean-ergodic} that $\T$ possess a non-zero fixed point can be completely dropped:

\begin{open_problem}
	Does Theorem~\ref{thm:unif-mean-lower-bound-mean-ergodic} remain true if we do not a priori assume that the fixed space of $\T$ is non-zero? If not, does the theorem remain true without this assumption at least in case that every order interval in $X$ is weakly compact?
\end{open_problem}

\appendix

\section{Examples of AN--spaces} \label{appendix:example-additive-norm}

Motivated by the discussion in Section~4.1 we present several non-trivial examples of AN--spaces in this appendix. First we show by a rather general construction that there exist, on  one hand, infinite-dimensional examples of such spaces which are reflexive and that there exist, on the other hand, examples of such spaces where not every order interval is weakly compact (see Propositions~\ref{prop:additive-norm-on-centred-cones} and~\ref{prop:centred-cones-non-weakly-compact-order-interval}).  Furthermore we demonstrate how the cone of an AL-Banach lattice can be adapted to yield an AN--space which is no longer lattice ordered and not reflexive, but in which all order intervals are weakly compact (Example~\ref{ex:smaller-cone-in-al-space}).

Let $X$ be a Banach space and let $u \in X$, $u' \in X'$ such that $\langle u', u\rangle = 1$. Set $P = u' \otimes u$,  $Q = I-P$. Then the set
\begin{align*} \label{eq:representation-of-centred-cones}
	X_{+,u,u'} := \{y \in X: \langle u', y\rangle \ge\|Qy\|\}.
\end{align*}
is a closed generating cone in $X$ that renders $X$ an ordered Banach space. We call $X_{+,u,u'}$ a centered cone with parameters  $u$ and $u'$. Some properties of those cones were studied in detail in \cite[Section~4.1]{GlueckDISS}; in particular, it is shown in \cite[Proposition~4.1.4]{GlueckDISS} that the cone $X_{+,u,u'}$ is normal and has non-empty interior. Related constructions had already occurred earlier in the literature on several occasions, as for instance in \cite[Section~1]{Schaefer1963} and \cite{Fiedler1973, Lyubich1995, Lyubich1998}; see also the last part of Section~2.6 in \cite{Aliprantis2007}.

Let us now show that every Banach space which is endowed with a centred cone can be given an equivalent norm which is additive on the positive cone.

\begin{proposition_within_section} \label{prop:additive-norm-on-centred-cones}
	Let $X$ be a Banach space ordered by the centred cone $X_{+,u,u'}$. Then
	\begin{align*}
		\|z\|_1 := \max\{\|Pz\|,\|Qz\|\} \qquad \text{for all } z \in X
	\end{align*}
	defines a norm on $X$ which is equivalent to the original norm $\|\cdot\|$ and which is additive on  $X_{+,u,u'}$. In particular $(X, X_{+,u,u'}, \|\cdot\|_1)$ is an AN--space.
\end{proposition_within_section}
\begin{proof}
	The fact that $\|\cdot\|_1$ is equivalent to the given norm follows from $P+Q = I$, and the fact that $\|\cdot\|_1$ is additive on the cone follows from $\|z\|_1= \|Pz\| = \langle u',z \rangle $ for all $z \in X_{+,u',u}$.
\end{proof}

This proposition shows that every Banach space $X$ can be endowed with a cone and an equivalent norm such that it becomes an AN-space (and thus, after a further renorming, a base norm space, cf.~Proposition~\ref{prop:renorming-to-become-a-base-norm-space}); a related observation is briefly discussed in \cite[Example~4 on p.\,27]{Nagel1974}. On the other hand, the following general result implies that a Banach space which is ordered by a centred cone is reflexive if and only if each order interval is weakly compact:

\begin{proposition_within_section} \label{prop:centred-cones-non-weakly-compact-order-interval}
	Let $X$ be an ordered Banach space with normal cone and assume that $X_+$ has non-empty interior. Then the following assertions are equivalent:
	\begin{enumerate}
		\item The space $X$ is reflexive.
		\item Every order interval in $X$ is weakly compact.
	\end{enumerate}
\end{proposition_within_section}
\begin{proof}
	``(i) $\Rightarrow$ (ii)'' Every ordered interval in $X$ is weakly closed and, by the normality of $X_+$, norm bounded.
	
	``(ii) $\Rightarrow$ (i)'' Let $x$ be an interior point of $X_+$. Then $-x$ is an interior point of $-X_+$, and thus $x$ is an interior point of $2x- X_+$. Thus, the order interval
	\begin{align*}
		[0,2x] = X_+ \cap (2x-X_+)
	\end{align*}
	has non-empty interior. Hence, $[0,2x]$ contains a closed ball $B$ of strictly positive radius. Since $[0,2x]$ is weakly compact, so is $B$. Hence, $X$ is reflexive.
\end{proof}

Propositions~\ref{prop:additive-norm-on-centred-cones} and~\ref{prop:centred-cones-non-weakly-compact-order-interval} have the following interesting consequence:

\begin{rem_within_section} \label{rem:an-spaces-and-banach-lattices-appendix}
	Let $X$ be a non-reflexive Banach space; we endow $X$ with a centred cone $X_{+,u',u}$ and with the norm $\|\cdot\|_1$ from Proposition~\ref{prop:additive-norm-on-centred-cones}. Then $X$ is an AN--spaces according to this proposition, but it follows from Proposition~\ref{prop:centred-cones-non-weakly-compact-order-interval} that not every order interval in $X$ is weakly compact. In particular, $X$ cannot be a projection band in its bi-dual according to Proposition~\ref{prop:band-in-bidual-implies-weakly-compact-intervals}.
	
	This is a remarkable contrast to the Banach lattice case, since every AL-Banach lattice is a projection band in its dual.
\end{rem_within_section}

In view of the results in Section~\ref{section:long-term-behaviour-of-asymp-dom-sg}, ordered Banach spaces with weakly compact order intervals are of particular interest. AL-Banach lattices and more generally  pre-duals of von Neumann-algebras have this property. Naturally, the question arises whether one can find further examples of AN--spaces in which the order intervals are weakly compact.  Trivial examples of such spaces are given by direct sums of AL-Banach lattices with reflexive spaces that are endowed with a centred cone (and an appropriate norm, see Proposition~\ref{prop:additive-norm-on-centred-cones}). Let us close this appendix with a less trivial example; our construction takes the positive cone in an AL-Banach lattice and makes it somewhat smaller.

\begin{example_within_section} \label{ex:smaller-cone-in-al-space}
	Let $X$ be an AL-Banach lattice with positive cone $X_+$ and let $\dim X \ge 3$. Take $\varphi \in X'$ such that $\varphi_+,\ \varphi_- \not= 0$ (note that $X'$ is also a Banach lattice, so $\varphi_+$ and $\varphi_-$ are well-defined). Set
	\begin{align*}
		K = \{x \in X_+: \langle \varphi, x\rangle \ge 0\} =\{ x \in X_+: \langle\varphi_+,x\rangle \ge \langle \varphi_-, x\rangle\}.
	\end{align*}
	Then $K$ is a closed, convex and proper cone. We denote the order on $X$ with respect to the original cone $X_+$ by $\le$, while we denote the order with respect to the new cone $K$ by $\le_K$.
	
	Let us now show that the new cone $K$ is also generating. Consider
	\begin{align*}
		F = \{x \in X: \; |x| \land |y| = 0 \text{ for all } y \in X \text{ satisfying } \langle \varphi_+,|y|\rangle = 0\},
	\end{align*}
	where $\land$ denotes the infimum with respect to the original order $\le$ on $X$. The set $F$ is a non-zero band in $X$ and the functional $\varphi_+$ is strictly positive on $F$ while $\varphi_-$ vanishes on $F$. Hence $F_+ \subset K$. Choose $y \in F_+$ such that $\langle \varphi_+,y \rangle = 1$; then $\langle \varphi_{-},y\rangle = 0$ holds. For each $x \in X$ we have
	\begin{align*}
		x= x_+ - x_- = \underbrace{(x_+ +\langle \varphi_-, x_+\rangle y)}_{\in K}-\underbrace{\langle \varphi_-, x_+\rangle y}_{\in K} - \underbrace{(x_-+\langle \varphi_-, x_-\rangle y)}_{\in K}+\underbrace{\langle \varphi_-, x_-\rangle y}_{\in K}.
	\end{align*}
	Hence, $K$ is indeed generating in $X$.
	
	Next we note that the order intervals on $X$ with respect to $\le_K$ are contained in the order intervals with respect to $\le$. Indeed, for all $x,y \in X$ we have
	\begin{align*}
		[x,y]_{\le_K} = (x+K) \cap (y-K) \subseteq (x+X_+) \cap (y - X_+) = [x,y]_{\le}.
	\end{align*}
	Hence, all order intervals with respect to $\le_K$ are weakly compact. Clearly, the norm is additive on $K$, as it is additive on the larger cone $X_+$.
	
	Finally, we show that $(X,K)$ is not a lattice if $\dim F \ge 2$. Let $P$ be the band projection onto $F$, and set $Q := I-P$; we have $P,Q \not= 0$ since $\varphi_+,\varphi_- \not= 0$. Choose $v \in F_+ = PX_+$ with $\langle \varphi_+,v\rangle =1$ as well as $w \in Q X_+$ with $\langle \varphi_-, w\rangle = 1$ and define $u = -v+w$.
	
	Let $U$ be the set of all upper bounds of $0$ and $u$ with respect to the order $\le_K$. A vector $y \in X$ is contained in $U$ if and only if the following three conditions are fulfilled:
	\begin{enumerate}
		\item[(i)] $y \ge 0$ and $y \ge u$,
		\item[(ii)] $\langle\varphi_+,y\rangle \ge \langle\varphi_-,y\rangle$,
		\item[(iii)] $\langle \varphi_+,y-u\rangle \ge \langle \varphi_-, y-u\rangle$.
	\end{enumerate}
	Now we note that condition (i) is equivalent to
$y \ge 0 \lor u$ (where $\lor$ denotes the supremum with respect to the order $\le$); since the vector $0 \lor u$ equals $w$, condition (i) is fulfilled if and only if $y \ge w$. Moreover, condition (iii) is equivalent to $\langle \varphi_+,y\rangle \ge \langle \varphi_-,y\rangle - 2$; hence, condition~(iii) is automatically fulfilled in case that~(ii) is fulfilled.

Summing up, we conclude that $y \in U$ if and only if $y \ge w$ and $\langle \varphi_+,y\rangle \ge \langle \varphi_-,y\rangle$, which is in turn fulfilled if and only if $y \ge w$ and $\langle \varphi_+,y\rangle \ge \langle \varphi_-,y\rangle \ge 1$ (since $\langle \varphi_-,w\rangle = 1$).

	Define $U_1 = \{y\ge 0: \langle \varphi_+,y\rangle = 1, \ Qy = w\}$. Then $U_1 \subseteq U$. Moreover, every element $y_1 \in U_1$ is minimal in $U$ with respect to the order $\le_K$. Indeed, let $y_1 \in U_1$ and let $y_0 \in U$ such that $y_0 \le_K y_1$. Then, in particular, $y_0 \le y_1$. Hence we have $Qy_0 \le Qy_1$ but, on the other hand, $Qy_1 = w = Qw \le Qy_0$, so $Qy_0 = Qy_1$. Moreover, we have $Py_0 \le Py_1$ and thus $\langle \varphi_+, Py_0\rangle \le \langle \varphi_+, Py_1\rangle$ but, on the other hand, $\langle \varphi_+, Py_1\rangle = \langle \varphi_+, y_1\rangle = 1 \le \langle \varphi_+, y_0\rangle = \langle \varphi_+, Py_0\rangle$, so actually $\langle \varphi_+,Py_0\rangle = \langle \varphi_+, Py_1\rangle$. Since $\varphi_+$ is strictly positive on $F = PX$, we conclude that $Py_0 = Py_1$. We have thus proved that $y_0 = y_1$.
	
	Therefore, $U_1$ consists of minimal upper bounds of $\{0,u\}$ with respect to $\le_K$. However, one readily checks that $U_1$ has more than one element if $\dim F \ge 2$. Our assertion is proved.
\end{example_within_section}

Note that, if $X$ is infinite dimensional in the above example, then the examples yields an AN--space with weakly compact order intervals which is neither reflexive nor a lattice.

\end{document}